\numberwithin{equation}{section}
\theoremstyle{plain}
\newtheorem{theorem}[equation]{Theorem}
\newtheorem*{theorem*}{Theorem}
\newtheorem{proposition}[equation]{Proposition}
\newtheorem{lemma}[equation]{Lemma}
\newtheorem*{lemma*}{Lemma}
\newtheorem*{proposition*}{Proposition}
\newtheorem*{corollary*}{Corollary}
\newtheorem{corollary}[equation]{Corollary}
\newtheorem{example}[equation]{Example}
\newtheorem{maintheorem}{Theorem}
\newtheorem{introexample}{Example}
\newtheorem{introcorollary}{Corollary}
\newtheorem{introproposition}{Proposition}
\theoremstyle{definition}
\theoremstyle{remark}
\newtheorem*{remark}{Remark}
\newcommand{\dd}{\mathrm{d}}
\newcommand{\BO}{\mathcal{O}}
\newcommand\HH{\mathrm{H}}
\newcommand{\CH}{\mathrm{CH}}
\newcommand{\hh}{\mathrm{h}}
\newcommand{\XX}{\mathcal{X}}
\newcommand{\CC}{\mathbb{C}}
\newcommand{\ZZ}{\mathbb{Z}}
\newcommand{\NN}{\mathbb{N}}
\newcommand{\QQ}{\mathbb{Q}}
\newcommand{\PP}{\mathbb{P}}
\newcommand{\dv}{\partial}
\newcommand\restr[1]{\raisebox{-.5ex}{$|$}_{#1}}
\newcommand{\sing}{\mathrm{sing}}
\newcommand{\Sing}{\mathrm{Sing}}
\newcommand{\Sm}{\mathrm{Sm}}
\newcommand{\codim}{\mathrm{codim}}
\newcommand{\GG}{\mathcal{G}}
\newcommand{\EE}{\mathscr{E}}
\newcommand{\AF}{\mathbb{A}}
\newcommand{\AAA}{\mathbb{A}}
\DeclareMathOperator{\divv}{div}
\newcommand{\sheafhom}{\mathscr{H}\kern -.5pt om}
\DeclareMathOperator{\Sym}{Sym}
\newcommand{\AbVar}{\mathcal{A}}
\newcommand{\Art}{\mathcal{A}^\delta_{t,g-t}}
\newcommand{\spp}{\mathrm{sp}}
\newcommand{\Ker}{\mathrm{Ker}}
\newcommand{\red}{\mathrm{red}}
\newcommand{\mult}{\mathrm{mult}}
\DeclareMathOperator{\Bl}{Bl}
\title{The Gauss Map on Theta Divisors with Transversal $\mathrm{A}_1$ Singularities}
\author{Constantin Podelski}
\begin{document}  
\maketitle 
\begin{abstract}
    We use Lagrangian specialization to compute the degree of the Gauss map on Theta divisors with transversal $\mathrm{A}_1$ singularities. This computes the Gauss degree for a general abelian variety in the loci $\mathcal{A}^\delta_{t,g-t}$ that form some of the irreducible components of the Andreotti-Mayer loci. We also prove that the first coefficient of the Lagrangian specialization is the Samuel multiplicity of the singular locus.
\end{abstract}

\section{Introduction}
The Gauss map relies on the linear nature of abelian varieties by attaching to a smooth point of a divisor, its tangent space translated at the origin. This map was already used by Andreotti \cite{Andreotti1958} in his beautiful proof of the Torelli theorem, and its geometry is intimately connected with the singularities of the theta divisor. Let $\mathcal{A}_g$ be the moduli space of principally polarized abelian varieties of dimension $g$ over the complex numbers. Let $(A,\Theta)\in \mathcal{A}_g$, then the Gauss map
\[ \mathcal{G}_\Theta:\Theta  \dashrightarrow \PP(T_0^\vee A)\simeq \PP^{g-1} \]
is the rational map defined by the complete linear system $|L|$ where $L=\BO_A(\Theta)\restr{\Theta}$ denotes the normal bundle to the hypersurface $\Theta\subset A$. The Gauss map is generically finite if and only if $(A,\Theta)$ is indecomposable as a ppav (see \cite[Sec. 4.4]{Birkenhake2004} for generalities about $\GG_\Theta$). The generic degree of $\GG_\Theta$ is unknown beyond a few cases:
\begin{enumerate}
    \item For smooth Theta divisors, the degree is $[\Theta]^{g}=g!$ (Ex. \ref{Ex: Deg Gauss map smooth theta}).
    \item For non-hyperelliptic (resp. hyperelliptic) Jacobians, the degree is $\binom{2g-2}{g-1}$ (resp. $2^{g-1}$) \cite[247]{arbarello}.
    \item For a general Prym variety the degree is $D(g+1)+2^{g-2}$, where $D(g)$ is the degree of the variety of all quadrics of rank $\leq 3$ in $\PP^{g-1}$ \cite{Verra98}.
\end{enumerate}
Another case where the degree of the Gauss map is straightforward to compute is when $\Theta$ has isolated singularities. In this case we have by \cite[Rem. 2.8]{Gru17}
\[ \deg \GG_\Theta = g! - \sum_{z\in \Sing(\Theta)} \mult_z \Theta \,, \]
where $\mult_z \Theta$ is the Samuel multiplicity as defined for example in \cite[Sec 4.3]{Fulton1998}. For a complex variety $X$ of dimension $n$, we denote by 
\[\chi(X)=\sum_{i=0}^{2n} (-1)^i \dim_\QQ \HH^i(X,\QQ)\]
the topological Euler characteristic. We say that $X$ has \emph{transversal $\mathrm{A}_1$ singularities} if for all points $x\in \Sing(X)$, there is a local analytic isomorphism
\[ (X,x) \simeq (V(x_1^2+\cdots+x_k^2),0)\subset (\CC^{n+1},0) \,, \]
for some $k\geq 2$, where $x_1,\dots,x_{n+1}$ are coordinates on $\CC^{n+1}$. In a sense, this is the simplest kind of singularities to handle after isolated singularities. We have the following:
\begin{maintheorem}[\ref{Theorem: Gauss degree theta divisor with smooth singular locus}]\label{maintheorem: Gauss degree smooth singular locus}
Let $(A,\Theta)\in \AbVar_g$ such that $\Theta$ has transversal $\mathrm{A}_1$ singularities, then
\[ \deg \GG_\Theta = g! - 2 (-1)^{\dim B}\chi(B)-(-1)^{\dim C} \chi(C) \,,\]
where $B=\Sing(\Theta)$ and $C\in| L\restr{B}|$ is a general divisor in the linear system.
\end{maintheorem}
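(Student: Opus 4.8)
The plan is to compute $\deg \GG_\Theta$ as the degree of the line bundle $L$ restricted to $\Theta$, but corrected for the fact that $\GG_\Theta$ is only a rational map, undefined along $\Sing(\Theta)$. Since $\GG_\Theta$ is the map associated to $|L|$, the naive intersection number $[\Theta]^g = g!$ overcounts by contributions coming from the indeterminacy locus $B = \Sing(\Theta)$. The natural tool is Lagrangian specialization: one resolves $\GG_\Theta$ by blowing up $B$ (or rather by passing to the conormal variety / the characteristic cycle picture), and the excess contribution along $B$ is governed by a Lagrangian cycle in the cotangent bundle $T^*A$, whose coefficients are exactly the quantities $\chi(B)$ and $\chi(C)$ appearing in the statement. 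So the first step is to set up precisely the Lagrangian specialization of the conormal cycle of $\Theta$ and identify its irreducible components: the conormal variety of $\Theta$ itself (contributing $g!$), the zero section over $B$ (contributing a multiple of $\chi(B)$), and one further component over $B$ (contributing $\chi(C)$).

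The key geometric input is the local model of a transversal $\mathrm{A}_1$ singularity. Étale-locally along $B$, the pair $(\Theta, B)$ looks like $(V(x_1^2 + \cdots + x_k^2), 0) \times (\text{smooth factor})$, so after blowing up $B$ one gets a conic bundle degenerating to a double line; the exceptional behaviour is uniform in codimension, and the conormal directions over a point of $B$ form a quadric cone of rank $k$. This means the Lagrangian cycle over $B$ has two pieces: the conormal variety of $B$ in $A$ (the "generic" conormal direction, with multiplicity related to the rank-drop, hence to a general hyperplane section $C \in |L|_B|$), and the zero section $T^*_A A|_B$ with multiplicity $2$ coming from the double-point nature of the singularity — this is where the factor of $2$ in front of $\chi(B)$ comes from. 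Computing the contribution of the conormal variety of $B$ to the degree: pushing it forward to $\PP(T_0^\vee A)$ and taking degree yields, by the same Gauss-map-of-$B$ reasoning or a direct Chern class computation, the term $(-1)^{\dim C}\chi(C)$, using that $C$ is a general element of $|L|_B|$ and the adjunction/Lefschetz relation between $\chi(B)$, $\chi(C)$ and the relevant Chern numbers.

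The mechanism that converts all of this into the formula is the general principle (Kashiwara–Dubson / index-type formula, or the microlocal interpretation of the Gauss degree) that $\deg \GG_\Theta$ equals $(-1)^{g-1}$ times the intersection of the characteristic cycle of $\Theta$ with a generic conormal line — equivalently, the characteristic cycle $\mathrm{CC}(\Theta)$ evaluated against a generic cotangent direction, which for each Lagrangian component $\Lambda_\alpha = \overline{T^*_{Z_\alpha} A}$ with multiplicity $m_\alpha$ contributes $m_\alpha \cdot (\text{local Euler obstruction / degree})$. Assembling: the main component gives $g!$; the zero section over $B$ with multiplicity $2$ gives $-2(-1)^{\dim B}\chi(B)$ (the sign being the usual $(-1)^{\dim B}$ from passing from the constant sheaf on $B$ to its characteristic cycle); and the conormal-of-$B$ component gives $-(-1)^{\dim C}\chi(C)$. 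I would carry this out by: (i) recalling the Lagrangian specialization / characteristic cycle formalism and the index formula for $\deg \GG_\Theta$; (ii) computing $\mathrm{CC}(\Theta)$ locally at a transversal $\mathrm{A}_1$ point, getting the three components with the stated multiplicities; (iii) computing each component's contribution to the degree, reducing the conormal-of-$B$ term to $\chi(C)$ via the linear system $|L|_B|$; (iv) summing.

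**The main obstacle** will be step (ii)–(iii): rigorously pinning down the multiplicity $2$ on the zero section and, more delicately, identifying the contribution of the conormal variety of $B$ with $(-1)^{\dim C}\chi(C)$ rather than some other invariant of $B$ and $L|_B$. The factor $2$ must come from the $\mathrm{A}_1$ (double point) structure and should be checked against the isolated-singularity formula $\deg\GG_\Theta = g! - \sum_z \mult_z\Theta$ as a sanity case (where $\mult_z\Theta = 2$ and $B$ is a point, so $\chi(B)=1$, $C=\emptyset$), but in positive-dimensional $B$ one has to control how the conic bundle over $B$ degenerates and confirm no further Lagrangian components (e.g. over a smaller stratum where the transversal $\mathrm{A}_1$ model could in principle interact with the geometry of $B$) appear — this is exactly where the \emph{transversal} hypothesis, giving a product local structure, is essential, and where I expect the technical heart of the argument to lie.
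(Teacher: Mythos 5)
Your high-level framework (a one-parameter smoothing, Lagrangian specialization of the relative conormal variety, the local transversal $\mathrm{A}_1$ model, and the conversion of conormal degrees into signed Euler characteristics) is the same as the paper's, but your identification of the components of $\spp_0\Lambda_{\Theta_S/S}$ is wrong in a way that blocks the proof. First, ``the zero section $T^\vee_A A\restr{B}$'' cannot occur as a term: it has dimension $\dim B<g$ and is not a conormal variety, so it is not a Lagrangian cycle; the component carrying the multiplicity $2$ is the conormal variety $\Lambda_B$ of $B$ in $A$ (the $2$ being the Samuel multiplicity of the double point, consistent with Theorem \ref{theorem: Second order Lagrangian Specialization}), and by the microlocal index formula its degree is $(-1)^{\dim B}\chi(B)$ --- not $(-1)^{\dim C}\chi(C)$, which you try to extract from $\Lambda_B$ by ``adjunction/Lefschetz''. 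Second, and more seriously, the $\chi(C)$ term comes from a genuinely separate third component $\Lambda_C$ supported over a \emph{divisor} $C\varsubsetneq B$, namely $C=V\bigl(\dv\theta/\dv s\restr{B}\bigr)=\Sing(\Theta_S)\cap\Theta$, as the local computation of Example \ref{Ex: Lagrangian Specialization, rank $d$ singular quadric case} for $x_1^2+\cdots+x_d^2+x_{d+1}s$ shows: $\spp_0=\Lambda_X+2\Lambda_B+\Lambda_C$. Your closing remark that one must ``confirm no further Lagrangian components over a smaller stratum appear'' is therefore aiming at the wrong target: such a component \emph{does} appear, and it is the source of the $\chi(C)$ term.

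The missing idea that makes the statement come out as claimed is the heat equation \eqref{Equ: heat equation theta}: one chooses the deformation direction $\mathscr{D}=\sum\lambda_{ij}\dv_i\dv_j$ so that $\dv\theta/\dv s=\sum\lambda_{ij}\dv^2\theta/\dv z_i\dv z_j$, whence $C\subset B$ is a member of the linear system spanned by the restricted second derivatives of $\theta$, which is a base-point-free subsystem of $|L\restr{B}|$ (base-point-freeness being exactly the rank condition of Proposition \ref{Prop: Appendix: Hessian max rank is equiv to smooth sing locus}); Bertini then lets you take $C$ smooth and general. Without this input there is no reason for the divisor $C$ cut out by the deformation to lie in $|L\restr{B}|$, and your proposal gives no alternative route to identifying it. You would also need the normal form of Lemma \ref{Lemma: Normal Form for Morse functions with a 1-dimensional parameter and smooth critical locus} (Morse lemma with parameters) to reduce the global specialization to the local model along all of $B$, including the points of $C$ where the family fails to smooth the singularity in first order.
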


Recently, Codogni, Grushevsky and Sernesi~\cite{Gru17} introduced the stratification of $\mathcal{A}_g$ by the \emph{Gauss loci}
\[ \mathcal{G}^{(g)}_d \coloneqq \{ (A,\Theta)\in \mathcal{A}_g \,|\, \deg \mathcal{G}_\Theta \leq d \} \,. \]
These loci are closed by \cite{KraemerCodogni} and the Jacobian locus $\mathcal{J}_g$ is an irreducible component of $\mathcal{G}^{(g)}_d$ for 
\[ d = \binom{2g-2}{g-1} \,.\]
It is interesting to study how the Gauss loci interact with the stratification introduced by Andreotti and Mayer in \cite{Andreotti1967}, which consists of the loci 
\[ \mathcal{N}^{(g)}_k=\{ (A,\Theta)\in \mathcal{A}_g \,|\, \dim \Sing (\Theta)\geq k \} \,. \]
Andreotti and Mayer prove that the Jacobian locus $\mathcal{J}_g$ is an irreducible component of $\mathcal{N}^{(g)}_{g-4}$. For $g\geq 5$, the known irreducible components of $\mathcal{N}^{(g)}_{g-4}$ away from the locus of decomposable ppav's are by \cite{Donagi88SchottkyProb} and \cite{Debarre1988}:
\begin{itemize}
\item the locus of Jacobians $\mathcal{J}_g$,
\item two loci $\EE_{g,0}$ and $\EE_{g,1}$ arising from Prym varieties of certain étale double covers of bielliptic curves (for a definition see \cite{Debarre1988}),
\item for $2\leq t\leq g/2$, the loci $ \mathcal{A}^{2}_{t,g-t}$ of ppav's containing two complementary abelian varieties of dimension $t$ and $g-t$ respectively, such that the induced polarization is of type $(2)$ (defined by Proposition \ref{PropComplAbVar}).
\end{itemize}
It turns out that by Debarre, a general member of $\mathcal{A}^{2}_{t,g-t}$ for $2\leq t\leq g/2 $ satisfies the conditions of Theorem \ref{maintheorem: Gauss degree smooth singular locus}. As a consequence we have:
\begin{maintheorem}[\ref{Thm: Gauss Degree on A^d_g_1,g_2 }]\label{mainthm: Gauss Degree on A^d_g_1,g_2 }
Let $2\leq t\leq g/2 $, then for a general $(A,\Theta)\in \mathcal{A}^{2}_{t,g-t}$, we have
\begin{align*}
\deg \mathcal{G}_\Theta= t!(g-t)! g \,.
\end{align*}
\end{maintheorem}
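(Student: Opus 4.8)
The plan is to realise $\Theta$ on an explicit étale cover and then apply Theorem~\ref{maintheorem: Gauss degree smooth singular locus}. By Debarre's description of $\mathcal{A}^2_{t,g-t}$, a general $(A,\Theta)$ contains complementary abelian subvarieties $X$ and $Y$ of dimensions $t$ and $g-t$, the line bundles $M_X:=\BO_A(\Theta)|_X$ and $M_Y:=\BO_A(\Theta)|_Y$ are polarizations of type $(1,\dots,1,2)$ (so $h^0(M_X)=h^0(M_Y)=2$), the addition map $\phi\colon X\times Y\to A$ is an isogeny with $\phi^*\BO_A(\Theta)\cong M_X\boxtimes M_Y$ and $\ker\phi\cong(\ZZ/2)^2$, and $\Theta$ has transversal $\mathrm{A}_1$ singularities, so that Theorem~\ref{maintheorem: Gauss degree smooth singular locus} applies. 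Since $\phi^*\BO_A(\Theta)$ is ample and $\Theta$ is principal, $\deg\phi=\chi(\phi^*\BO_A(\Theta))=h^0(M_X)\,h^0(M_Y)=4$, and $\phi^*\Theta$ is the unique $\ker\phi$-invariant member of $|M_X\boxtimes M_Y|$; in suitable bases $s_0,s_1$ of $H^0(M_X)$ and $t_0,t_1$ of $H^0(M_Y)$ it is cut out by $s_0\boxtimes t_0+s_1\boxtimes t_1$.

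Next I would identify $B:=\Sing\Theta$. Let $Z_X\subset X$ and $Z_Y\subset Y$ be the base loci of the pencils $|M_X|$ and $|M_Y|$. A computation of the Jacobian ideal of $s_0\boxtimes t_0+s_1\boxtimes t_1$, together with the genericity of $(A,\Theta)$, shows that $Z_X$ and $Z_Y$ are smooth of codimension $2$ (of dimensions $t-2$ and $g-t-2$) and that $\phi^{-1}(B)=\Sing(\phi^*\Theta)=Z_X\times Z_Y$: the only potential further singular points lie over pairs $(x,y)$ for which $x$ is a singular point of a member of $|M_X|$ and $y$ of a member of $|M_Y|$ whose parameters in $\PP^1$ match under a fixed projectivity, and for general $(A,\Theta)$ the two pencil discriminants are disjoint under this matching. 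Since $\ker\phi$ lies in $K(M_X)\times K(M_Y)$ it preserves $Z_X\times Z_Y$ and acts freely on it by translations, so $\phi$ restricts to an étale $(\ZZ/2)^2$-cover $Z_X\times Z_Y\to B$; in particular $\chi(B)=\tfrac14\chi(Z_X)\chi(Z_Y)$, and a general $C\in|L|_B|$ pulls back to a smooth member $\widetilde C$ of $|(M_X\boxtimes M_Y)|_{Z_X\times Z_Y}|$ with $\chi(C)=\tfrac14\chi(\widetilde C)$. I expect this identification of $\Sing\Theta$ on the cover — ruling out accidental extra components and checking freeness of the $\ker\phi$-action — to be the main obstacle, since it is exactly where the genericity of $(A,\Theta)$ in $\mathcal{A}^2_{t,g-t}$ enters.

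With this in hand the rest is formal. Write $\theta_X=c_1(M_X)$ and $\theta_Y=c_1(M_Y)$; from the polarization type $\int_X\theta_X^t=2\,t!$ and $\int_Y\theta_Y^{g-t}=2\,(g-t)!$, so $\int_{X\times Y}\theta_X^a\theta_Y^b$ equals $4\,t!\,(g-t)!$ when $(a,b)=(t,g-t)$ and vanishes otherwise. Since $Z_X$ is the zero scheme of a regular section of $M_X^{\oplus 2}$, adjunction gives $c(T_{Z_X})=(1+\theta_X)^{-2}|_{Z_X}$ with $[Z_X]=\theta_X^2$ in $X$ (and likewise for $Z_Y$); combining this with Gauss--Bonnet yields
\[ \chi(B)=\frac14\int_{X\times Y}\frac{\theta_X^2\,\theta_Y^2}{(1+\theta_X)^2(1+\theta_Y)^2}=(-1)^g(t-1)(g-t-1)\,t!\,(g-t)!\,, \]
while applying adjunction and Gauss--Bonnet to $\widetilde C\subset Z_X\times Z_Y$ and pushing forward to $X\times Y$ gives
\[ \chi(C)=\frac14\int_{X\times Y}\frac{\theta_X^2\,\theta_Y^2\,(\theta_X+\theta_Y)}{(1+\theta_X)^2(1+\theta_Y)^2(1+\theta_X+\theta_Y)}\,. \]

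Finally, since $\dim B=g-4$ and $\dim C=g-5$, Theorem~\ref{maintheorem: Gauss degree smooth singular locus} reads $\deg\GG_\Theta=g!+(-1)^g\bigl(\chi(C)-2\chi(B)\bigr)$, and combining the two displays,
\[ \deg\GG_\Theta=g!-\frac{(-1)^g}{4}\int_{X\times Y}\frac{\theta_X^2\,\theta_Y^2\,(2+\theta_X+\theta_Y)}{(1+\theta_X)^2(1+\theta_Y)^2(1+\theta_X+\theta_Y)}\,. \]
By the intersection rule above, this integral equals $4\,t!\,(g-t)!$ times the coefficient of $u^{t-2}v^{g-t-2}$ in $\frac{2+u+v}{(1+u)^2(1+v)^2(1+u+v)}$; writing $\frac{2+u+v}{1+u+v}=1+\frac{1}{1+u+v}$, expanding, and using the hockey-stick identity $\sum_{k=0}^{a}\sum_{l=0}^{b}\binom{k+l}{k}=\binom{a+b+2}{a+1}-1$, one finds that this coefficient equals $(-1)^g\bigl(\binom{g}{t}-g\bigr)$. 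Hence $\deg\GG_\Theta=g!-t!\,(g-t)!\bigl(\binom{g}{t}-g\bigr)=t!\,(g-t)!\,g$, as claimed. Apart from the identification of $\Sing\Theta$, the only non-formal ingredient is this elementary binomial manipulation.
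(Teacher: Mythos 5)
Your proposal is correct and follows essentially the same route as the paper: invoke Debarre's results to identify $\Sing(\Theta)$ with the image of $Z_X\times Z_Y$ under the degree-$4$ isogeny and to get transversal $\mathrm{A}_1$ singularities, apply Theorem~\ref{maintheorem: Gauss degree smooth singular locus}, compute $\chi(B)$ and $\chi(C)$ by adjunction and Chern classes on the product $X\times Y$, and finish with a binomial identity (the paper's Lemma~\ref{Lem: Euler characteristic of B and C} and Lemma~\ref{Lem: generating series Amn} do this for general $d$, of which your $d=2$ computation is the special case). The step you flag as the main obstacle — ruling out extra components of $\Sing(\phi^*\Theta)$ beyond $Z_X\times Z_Y$ — is exactly what the paper outsources to Debarre (Theorem~\ref{Thm: Debarre: Cases where Ard verifies Star}), so your reliance on genericity there is consistent with the paper.
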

In particular, the degree of the Gauss map separates the components $\mathcal{A}^{2}_{t,g-t}$ from $\mathcal{J}_g$. The construction of $\mathcal{A}^{2}_{t,g-t}$ can be generalized for any polarization type $\delta=(a_1,\dots,a_k)$. One has
\[ \mathcal{A}^\delta_{t,g-t} \subset \mathcal{N}^{(g)}_{g-2d} \,, \quad \text{for $\deg \delta \leq t \leq  g/2 $,} \]
where $\deg  \delta \coloneqq a_1\cdots a_k$. Suppose $\delta \in \{(2),(3),(2,2)\}$, let $\deg \delta =d \leq t \leq g/2$, then  $\Art$ is an irreducible component of $\mathcal{N}^{(g)}_{g-2d}$ \cite{Debarre1988}. We compute the degree of the Gauss map for a general member of these loci as well, see Theorem \ref{Thm: Gauss Degree on A^d_g_1,g_2 }. Using different techniques, it is also possible to compute the degree of the Gauss map on the loci $\EE_{g,0}$ and $\EE_{g,1}$, see the forthcoming paper \cite{podelski2023GaussEgt}. 
\par 
The main tool in the proof of Theorem \ref{maintheorem: Gauss degree smooth singular locus} is the notion of Lagrangian specialization, which was already employed by Codogni and Krämer to prove that the Gauss loci are closed \cite{KraemerCodogni}. Let us quickly recall the setup: Let $W$ be a smooth variety. One defines the conormal variety to a closed subvariety $X\subset W$ as the Zariski closure
\[  \Lambda_{X} \coloneqq \overline{\{ (x,\xi)\in T^{\vee}(W) \,|\, x\in \mathrm{Sm}(X)\,, \xi \bot T_x (X) \}} \subset T^\vee W\,. \]
This can be done in a relative setting as well: Let $S$ be a smooth curve, $q:\mathcal{W}\to S$ a smooth morphism and $\mathcal{X}\subset \mathcal{W}$ a subvariety that is flat over $S$. By replacing the tangent spaces in the above definition by the relative tangent spaces over $S$, one obtains the relative conormal variety
\[ \Lambda_{\mathcal{X}/S} \subset T^\vee (\mathcal{W}/S)\,. \]
Let $0\in S$ be a point and $W\coloneqq \mathcal{W}\restr{0}$ and $X\coloneqq \mathcal{X}\restr{0}$ be the fibers above $0$. By \cite{FultonKleimanMacPherson1983}, the specialization of $\Lambda_{\mathcal{X}/S}$ at $0$ is a formal sum of conormal varieties to subvarieties $Z\subset W$, i.e.
\[ \spp_0 (\Lambda_{\mathcal{X}/S})\coloneqq \Lambda_{\mathcal{X}/S}\restr{0}=\sum_{Z\subset W} m_Z \Lambda_Z \,, \]
for some positive integers $m_Z$. We can interpret this in terms of perverse sheaves: Let $\psi_q$ and $\phi_q$ be the nearby and vanishing cycle functors associated to $q$. Suppose that $\XX$ is smooth away from $\XX_0$ and $\dim \XX=n+1$. In that case the Lagrangian specialization computes the characteristic cycle of the nearby cycle functor (see for example \cite[Th. 3.55]{MaximSchurmann2022Survey})
\[ \mathrm{CC}(\psi_q(\underline{\CC}_\XX[n]))=\spp_0(\Lambda_{\mathcal{X}/S}) \,. \] 
Our next result is in a sense the leading term of the Lagrangian specialization in the codimension $1$ case (\ref{Prop: Lagrangian specialization non-reduced special fiber case}):
\begin{introproposition}[Leading term of the Lagrangian specialization]
In the above setting assume that $\mathcal{X}\subset \mathcal{W}$ is of codimension $1$. Then
\[ \spp_0(\Lambda_{\XX/S})=\sum_i (\mult_{X_i} X) \cdot \Lambda_{X_{i,\red}} + \sum_{Z \varsubsetneq X_{i,\red} } m_Z \Lambda_Z  \,, \]
where $X_i$ are the reduced irreducible components of $X$ and $\mult_{X_i} X=\mathrm{len}(\BO_{X,X_i})$ is the geometric multiplicity.
\end{introproposition}
When $X$ is reduced, we can go one step further (\ref{theorem: Second order Lagrangian Specialization}):
\begin{maintheorem}[Second term of the Lagrangian specialization]\label{maintheorem: Second Order Lagrangian Specialization}
In the above setting assume moreover that $X$ is reduced and $\XX\restr{s}$ is smooth for $s\neq 0$. Let $\Sing(X)=\cup_i Z_i$ be the decomposition of the singular locus into its scheme-theoretic irreducible components. Then
\[ \spp_0 \Lambda_{\XX/S}=\Lambda_X+\sum_i (\mult_{Z_i} X ) \cdot \Lambda_{Z_{i,red}} + \sum_{Y\varsubsetneq Z_{i,\red}} m_{Y} \Lambda_{Y} \]
where $\mult_Z X$ is the Samuel multiplicity of $Z$ in $X$ as defined in \cite[Sec. 4.3]{Fulton1998}. 
\end{maintheorem}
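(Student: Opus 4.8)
The plan is to isolate the coefficient of each $\Lambda_{Z_{i,\red}}$ by localizing at the generic point of $Z_i$, which reduces the statement to the case of an isolated singularity, and there to identify the coefficient of the vertical conormal $T^\vee_p W$ with the local intersection number of a generic polar curve with $X$, which in turn is the Samuel multiplicity of the Jacobian ideal. Since $X$ is reduced, the Leading Term Proposition (\ref{Prop: Lagrangian specialization non-reduced special fiber case}) already gives $\spp_0\Lambda_{\XX/S}=\Lambda_X+\sum_{Z\varsubsetneq X}m_Z\Lambda_Z$ with integers $m_Z>0$, so what is left is to identify the $Z$'s and their coefficients. First I would note that any occurring $Z\neq X$ satisfies $\overline Z\subseteq\Sing X$: over a point $x\in X\setminus\Sing X$ the fibre of $\XX\to S$ is smooth, so by the fibrewise smoothness criterion $\XX\to S$ is smooth near $x$; hence $\XX$ is smooth there and, near the fibre over $x$, $\Lambda_{\XX/S}$ is the total space of a line subbundle of $T^\vee(\mathcal{W}/S)$ varying smoothly in $s$, whose specialization at $0$ is just $\Lambda_X$. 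Each occurring $\overline Z\neq X$ is irreducible and contained in $\Sing X=\bigcup_i Z_{i,\red}$, hence in a single $Z_{i,\red}$; the $Z$'s equal to some $Z_{i,\red}$ contribute the middle sum, the others the error term. It therefore suffices to show that for a scheme-theoretic component $Z_i$ of $\Sing X$ the coefficient of $\Lambda_{Z_{i,\red}}$ equals $\mult_{Z_i}X$.

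Let $\eta$ be the generic point of $Z_i$, $\kappa=k(Z_i)$, and replace $\mathcal{W},\XX,S$ by their localizations (or completions) along $\eta$; both the Lagrangian specialization -- being built from cycles flat over $S$ -- and the Samuel multiplicity are insensitive to this. Then $W$ is the spectrum of a regular local $\kappa$-algebra of dimension $N=\codim(Z_i\subseteq\mathcal{W})$, $X=V(f)$ has an isolated singularity at the closed point $p$, the ideal $\mathcal I_{Z_i}\cdot\BO_{X,p}$ is the $\mathfrak m$-primary Jacobian ideal $J_f=(\partial_1 f,\dots,\partial_N f)\BO_{X,p}$, and the Samuel multiplicity $\mult_{Z_i}X$ equals $e(J_f,\BO_{X,p})$ \cite[\S4.3]{Fulton1998}; moreover the only component of $\spp_0\Lambda_{\XX/S}$ besides $\Lambda_X$ that can now occur is $\Lambda_{\{p\}}=T^\vee_p W$, of some multiplicity $m$, so I must show $m=e(J_f,\BO_{X,p})$. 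By \cite{FultonKleimanMacPherson1983} the specialization is the limit of cycles $\lim_{s\to 0}[\Lambda_{\XX|_s}]$, and intersecting it with a generic affine cotangent slice $W\times\{\zeta\}\subseteq T^\vee W$ -- with $\zeta$ chosen so the slice is transverse and meets $\Lambda_X$ only away from $p$ -- extracts $m$ as the number of points of $\Lambda_{\XX|_s}\cap(W\times\{\zeta\})$ that collapse to $(p,\zeta)$ as $s\to 0$.

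To count these, write $\XX=V(F)$ locally with $F|_{s=0}=f$ and fix a coordinate $\zeta_k$ of $\zeta$ that is a unit; the points in question are the solutions on $\XX|_s$ of $\partial_iF-(\zeta_i/\zeta_k)\,\partial_kF=0$ for $i\neq k$. Their closure $\mathcal C:=V\big(F,\ \partial_iF-(\zeta_i/\zeta_k)\partial_kF:i\neq k\big)\subseteq\mathcal{W}$ is, for generic $\zeta$, a one-dimensional complete intersection, hence Cohen--Macaulay, so conservation of number gives $m=\ell\big(\BO_{\mathcal C,p}/(s)\big)=\ell\big(\BO_{\mathcal{W},p}/(f,\ \partial_if-(\zeta_i/\zeta_k)\partial_kf:i\neq k)\big)=(\Gamma_\zeta\cdot X)_p$, where $\Gamma_\zeta=V(\partial_if-(\zeta_i/\zeta_k)\partial_kf:i\neq k)\subseteq W$ is the polar curve of $X$ in the direction $\zeta$ (which passes through $p$ because $\nabla f$ vanishes there). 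In particular $m$ does not depend on the chosen deformation $F$.

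It remains to compute $(\Gamma_\zeta\cdot X)_p$. For generic $\zeta$ the $N-1$ elements $\partial_if-(\zeta_i/\zeta_k)\partial_kf$ ($i\neq k$) are $N-1$ general $\kappa$-linear combinations of the generators of $J_f$; since $\dim\BO_{X,p}=N-1$ and $\BO_{X,p}$ is Cohen--Macaulay, by the theory of reductions of ideals they form a minimal reduction of $J_f\cdot\BO_{X,p}$, so $\ell\big(\BO_{X,p}/(\partial_if-(\zeta_i/\zeta_k)\partial_kf:i\neq k)\big)=e(J_f,\BO_{X,p})$; and since these same $N-1$ elements form a regular sequence in the regular local ring $\BO_{\mathcal{W},p}$, this length is exactly $(\Gamma_\zeta\cdot X)_p$. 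Hence $m=e(J_f,\BO_{X,p})=\mult_{Z_i}X$, which proves the theorem. (The identity $(\Gamma_\zeta\cdot X)_p=e(J_f,\BO_{X,p})$ can alternatively be quoted from Teissier's theory of polar invariants.) I expect the main obstacle to lie in making the third paragraph rigorous: one must show that the multiplicity of $T^\vee_p W$ in the Fulton--Kleiman--MacPherson specialized cycle is literally this conserved intersection number -- in particular that no branch of $\Lambda_{\XX|_s}$ over the direction $[\zeta]$ escapes to infinity in $T^\vee W$ instead of collapsing to $p$ -- together with the mild bookkeeping needed to run the argument over the residue field $\kappa$ in place of $\CC$.
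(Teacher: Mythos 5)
Your strategy---isolate the coefficient of $\Lambda_{Z_{i,\red}}$ locally, realize it as a polar intersection number by conservation of number, and identify that with the Samuel multiplicity via minimal reductions---is a genuinely different route from the paper's (which works globally on $\PP\Lambda_{\XX/S}=\Bl_{\Sing(\XX/S)}\XX$, pulls back the relation $[X]=q^\ast[0]$, and extracts $\mult_ZX$ from Fulton's Segre--class description of the exceptional divisor of $\Bl_ZX$). Your route is correct, and closes, when $\Sing(X)$ is zero-dimensional. But for a positive-dimensional component $Z_i$ the reduction in your second paragraph has a genuine gap, and it is not the residue-field bookkeeping you flag at the end. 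Localizing $\mathcal{W}$ at the generic point $\eta$ of $Z_{i,\red}$ does not produce an isolated hypersurface singularity in an $N$-dimensional ambient space: the relative cotangent bundle keeps its full rank $n=\dim W$, the function $f$ still has $n$ partial derivatives, and the component $\Lambda_{Z_{i,\red}}$ is the $N$-plane $(N=\codim(Z_{i,\red},W))$ given by the conormal space of $Z_{i,\red}$ inside the $n$-dimensional fibre $T^\vee_\eta W$ --- it is \emph{not} the full fibre $T^\vee_pW$. Consequently your test slice $W\times\{\zeta\}$, for $\zeta$ a generic constant covector, meets $\Lambda_{Z_{i,\red}}$ only over the finitely many critical points of $\zeta\restr{Z_{i,\red}}$ and never over $\eta$, so it measures $m_{Z_{i,\red}}\cdot\deg\Lambda_{Z_{i,\red}}$ plus contributions of the deeper $\Lambda_Y$, not $m_{Z_{i,\red}}$ itself.

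One can see the failure explicitly on the paper's own Example (transversal $\mathrm{A}_1$): for $F=x_1^2+\cdots+x_{n-1}^2+x_ns$ the locus $\mathcal{C}=V(F,\ \partial_iF-c_i\partial_1F)$ decomposes as $Z\cup(\text{a section of }S)$; it is not flat over $S$ (it has the component $Z$ inside the special fibre), the unique point of $\mathcal{C}_s$ collapses to the origin rather than to $\eta$, and the na\"ive colength $\ell(\BO_{X,\eta}/(n-1\text{ generic combinations of the }\partial_jf))$ comes out to $1$ instead of the correct $\mult_ZX=2$: the polar locus contains $Z_{i,\red}$, and $n-1$ elements are too many to be a system of parameters in the $(N-1)$-dimensional ring $\BO_{X,\eta}$. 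The standard repair is a normal-slice lemma: pass to a general smooth $N$-dimensional slice $W'$ through a general closed point $z$ of $Z_{i,\red}$, and prove that Lagrangian specialization is compatible with such a slice, i.e.\ that the coefficient of $T^\vee_zW'$ in $\spp_0\Lambda_{(\XX\cap W'_S)/S}$ equals $m_{Z_{i,\red}}$ and that $\mult_z(X\cap W')=\mult_{Z_i}X$. That compatibility is true and standard (it is how multiplicities along strata of characteristic cycles are usually defined), and once it is in place the rest of your argument --- conservation of number along the one-dimensional complete intersection $\mathcal C$, and the identification of the polar intersection number with $e(J_f,\BO_{X,z})$ via Rees' theorem --- goes through; but it is precisely the missing ingredient, and it is the step the paper's intersection-theoretic computation on the blow-up is designed to avoid.
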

Let us consider the setting of plane curve singularities to illustrate the above theorem.
\begin{introexample}
    Let $f:\mathbb{A}^2\to \mathbb{A}$ be a map with an isolated critical point at $0$. Let $C_t\coloneqq \{f=t\}\subset \mathbb{A}^2$. Let $\mu, \kappa, m$ be the Milnor number, Samuel multiplicity (of $\Sing(C_0)$ in $C_0$), and the order of vanishing of $f$ at $0$ respectively. We then have
    \begin{align*}
        \mathrm{CC}(\phi_f(\underline{\QQ}_{\mathbb{A}^2}[1]))&=\mu \Lambda_x\,, \\
        \mathrm{CC}(\underline{\QQ}_{C_0}[1])&=\Lambda_{C_0}+(m-1)\Lambda_x\,.  \\
        \mathrm{CC}(\psi_f(\underline{\QQ}_{\mathbb{A}^2}[1])&=\Lambda_{C_0}+\kappa \Lambda_x \,, 
    \end{align*}
    where the two first equalities can be deduced for example from \cite[Ex. 3.58]{MaximSchurmann2022Survey} and the last equality is a consequence of the above theorem. We thus obtain the well-known relation (see for instance \cite{Nguyen2016InvaPlaneCurves})
    \[ \kappa=\mu+m-1\,. \]
\end{introexample}
As a corollary to Theorem \ref{maintheorem: Second Order Lagrangian Specialization} we obtain an upper bound on the degree of the Gauss map in terms of the degree of the conormal variety to the singular locus (\ref{Cor: bound on degree Gauss map by Lag specialization}):
\begin{introcorollary}\label{Maincor: bound on degree Gauss map by Lag specialization}
    Let $(A,\Theta)\in \AbVar_g$ and let $\Sing(\Theta)=\cup_i Z_i $ be the decomposition of the singular locus of $\Theta$ into its scheme-theoretic irreducible components. We have
    \[ \deg \GG_\Theta \leq g!-\sum_i (\mult_{Z_i} \Theta )\deg (\Lambda_{Z_{i,\red}} )\,,\]
    where
    \[ \deg \Lambda_Z\coloneqq [\Lambda_Z] \cdot [W]\in \HH_0(T^\vee W,\ZZ)\]
    is the degree of the intersection with the zero section $W\hookrightarrow T^\vee W $.
\end{introcorollary}
It is interesting to compare this with the formula obtained by Codogni, Grushevski and Sernesi in \cite[Cor. 2.6]{Gru17}:
\[ \deg \GG \leq g!- \sum_i (\mult_{Z_i} \Theta) \deg (L\restr{Z_{i,\red}})\,,\]
where $\deg(L\restr{Z})=c_1(L)^{\dim Z}\cap [Z]$ is the degree of the polarization $L=\BO_A(\Theta)$ restricted to $Z$. Although the similarity is striking, there is no obvious relation between both bounds: Indeed, let $2\leq t \leq g/2$ and let $(A,\Theta)\in \mathcal{A}^{2}_{t,g-t}$ be general. Then $\Sing(\Theta)$ is smooth and by \ref{Lem: Euler characteristic of B and C} we have
\[ \deg \Lambda_{\Sing(\Theta)}=t!(g-t)! (t-1)(g-t-1) \,. \]
A direct computation using \ref{Thm: Debarre: Cases where Ard verifies Star} shows
\begin{align*} \deg L\restr{\Sing(\Theta)} &=t!(g-t)! \binom{g-4}{t-2} \,,\end{align*}
which is less than $\deg \Lambda_{\Sing(\Theta)}$ for small values of $t$ and greater than $\deg \Lambda_{\Sing(\Theta)}$ for big values of $t$. The proof of Codogni, Grushevski and Sernesi relies on Vogel cycles and it is not clear how both techniques relate. It would also be interesting to know how the next coefficients in the Lagrangian specialization relate to known invariants of the singularity.  \par 

The text is organized as follows: In Section \ref{Section: Lagrangian Specialization}, we recall some well-known facts about the Lagrangian specialization, and prove Theorem \ref{maintheorem: Second Order Lagrangian Specialization}. In Section \ref{Sec: Theta Divisors with a Smooth Singular Locus} we then prove Theorem \ref{maintheorem: Gauss degree smooth singular locus}. Finally, in Section \ref{Sec: Gauss degree on Ard} we prove Theorem \ref{mainthm: Gauss Degree on A^d_g_1,g_2 }, and analyse the result numerically. \par
Throughout this paper we work over the field of complex numbers.

\section{Lagrangian Specialization}\label{Section: Lagrangian Specialization}

\subsection{Generalities on Lagrangian Specialization}
We recall some facts about Lagrangian specialization, see \cite{KraemerCodogni} for an introduction. Let $W$ be a smooth variety (i.e. an integral scheme over $\CC$) of dimension $n$. To a closed subvariety $X\subset W$ we associate its \emph{conormal variety}
\[ \Lambda_{X} \coloneqq \overline{\{ (x,\xi)\in T^{\vee}W \,|\, x\in \mathrm{Sm}(X)\,, \xi \bot T_x (X) \}}\subset T^\vee W \,. \]
The degree of a conormal variety is defined by
\[ \deg(\Lambda_{X}) \coloneqq \deg([\Lambda_{X/S}] \cdot [W ])\,, \]
where $W\hookrightarrow T^\vee (W)$ is embedded as the zero section and the product is in the Chow ring of $T^\vee W$. The same construction can be carried out in families: Suppose $S$ is a smooth (quasi-projective) curve, and
\[ q: \mathcal{W} \to S \]
is a smooth dominant morphism of varieties. Let $\mathcal{X}\subset \mathcal{W}$ be a closed subvariety, flat over $S$. One defines the \emph{relative conormal variety} to $\XX$ as the closure
\[ \Lambda_{\XX/S}\coloneqq  \overline{ \{ (x,\xi)\in T^\vee (\mathcal{W}/S) \, | \, x\in \Sm(\XX/S),\, \xi \bot T_x \XX_{f(x)} \} }\subset T^\vee (\mathcal{W}/S)\,, \]
where
\[ T^\vee(\mathcal{W}/S) \coloneqq T^\vee\mathcal{W} / f^{-1} T^\vee(S) \]
is the relative cotangent bundle. Let
\[ \mathscr{L}(\mathcal{W}/S)= \bigoplus_{\XX \subset \mathcal{W}} \mathbb{Z} \cdot \Lambda_{\XX/S} \]
denote the free abelian group generated by relative conormal varieties to closed subvarieties $\XX\subset \mathcal{W}$ that are flat over $S$. The Lagrangian specialization of $\Lambda_{\XX/\mathcal{W}}\in \mathscr{L}(\mathcal{W}/S)$ is the intersection with the fiber above $s\in S$. This is again a Lagrangian cycle on $\mathcal{W}_s$ \cite{FultonKleimanMacPherson1983} \cite{Trng1988LimitesDT}
\[\spp_s(\Lambda_{\XX/S})\coloneqq  \Lambda_{\XX/S} \cap T^\vee \mathcal{W}_s = m_{\XX_s} \Lambda_{\XX_s}+ \sum_{Z\subset \Sing(\XX_s)} m_Z \Lambda_Z\,, \]
where $m_{\XX_s},m_Z>0$ and the sum runs over finitely many subvarieties $Z\subset \Sing(\XX_s)$. Moreover, for a general $s\in S$,
\[ \spp_s(\Lambda_{\XX/S})=\Lambda_{\XX_s} \,. \]
\begin{remark}\label{Remark: local nature of Lagrangian specialization}
Note that the definition of the conormal variety and of the Lagrangian specialization are local. Thus, we can compute the coefficients $m_Z$ above locally.
\end{remark}
We define the \emph{projectivised} conormal variety $\PP\Lambda_{Z/S}$ by taking the image in the projectivised cotangent space $\PP T^\vee (\mathcal{W}/S)$. From now on we will assume $\XX \subset \mathcal{W}$ to be of codimension $1$. Recall that the \emph{relative singular locus} is the scheme defined locally by 
\[ \Sing(\XX/S)=V(F, \dv_1 F,\dots,\dv_n F)\subset \mathcal{W} \,, \]
where $F$ is a holomorphic function defining $\XX$ and $\dv_i$ generate the relative tangent space $T(\mathcal{W}/S)$ (recall that $\mathcal{W}$ is smooth, thus $\XX$ is a Cartier divisor). We have the following:
\begin{proposition}\label{Prop: Lagrangian is blowup}
Let $S$ be a curve or a point, and suppose $\XX\subset \mathcal{W}$ is of codimension $1$. There is a canonical identification
\[ \PP \Lambda_{\XX/S}=\Bl_{\Sing(\XX/S)} \XX 
\,. \]
\end{proposition}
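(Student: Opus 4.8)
The plan is to work locally, as justified by Remark \ref{Remark: local nature of Lagrangian specialization}, so fix an affine open $U \subset \mathcal{W}$ on which the relative cotangent bundle trivializes: picking relative coordinates $x_1, \dots, x_n$ over $S$, we have $T^\vee(\mathcal{W}/S)\restr{U} \simeq U \times \Aff^n$ with fiber coordinates $\xi_1, \dots, \xi_n$, and $\XX\restr{U} = V(F)$ for a regular function $F$. First I would write down an explicit chart-level description of $\Lambda_{\XX/S}$: over the smooth locus $\Sm(\XX/S)$, the conormal is cut out by $F = 0$ together with the rank-one condition that $(\xi_1, \dots, \xi_n)$ be proportional to $(\dv_1 F, \dots, \dv_n F)$, i.e.\ the $2\times 2$ minors of the matrix with rows $(\xi_i)$ and $(\dv_i F)$ vanish. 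Then $\Lambda_{\XX/S}$ is the closure of this locus inside $V(F) \times \Aff^n$.

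The core of the argument is to identify this closure with the blow-up $\Bl_{\Sing(\XX/S)} \XX$. Recall $\Sing(\XX/S)\restr{U} = V(F, \dv_1 F, \dots, \dv_n F)$, and since $\XX = V(F)$ is a Cartier divisor in the smooth $\mathcal{W}$, on $\XX$ the ideal $(F, \dv_1 F, \dots, \dv_n F)/(F)$ is generated by the images of $\dv_1 F, \dots, \dv_n F$ — that is, it is the Jacobian ideal $\mathcal{J}$ of $\XX$. The blow-up $\Bl_{\mathcal{J}} \XX \subset \XX \times \PP^{n-1}$ is, by definition, the closure of the graph of the rational map $\XX \dashrightarrow \PP^{n-1}$, $x \mapsto [\dv_1 F(x) : \cdots : \dv_n F(x)]$, which is defined exactly on the locus where not all $\dv_i F$ vanish, i.e.\ on $\Sm(\XX/S)$. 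Comparing with the description above, the projectivized conormal $\PP\Lambda_{\XX/S}$ is the closure in $\XX \times \PP^{n-1}$ of $\{(x, [\xi]) : x \in \Sm(\XX/S), [\xi] = [\dv_1 F(x) : \cdots : \dv_n F(x)]\}$, which is visibly the same graph closure. So the two subschemes of $\XX \times \PP^{n-1}$ agree set-theoretically, and I would then check they agree scheme-theoretically: the blow-up is reduced and irreducible over each component of $\XX$ where the Jacobian ideal is nonzero (the exceptional-divisor structure is determined by the Rees algebra, but what matters is that $\Bl_{\mathcal{J}}\XX$ has no embedded components and dominates $\XX$), and $\PP\Lambda_{\XX/S}$ is by construction a closure of a locally closed reduced subscheme, hence also reduced; two reduced subschemes with the same support coincide.

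To make the scheme-theoretic identification clean rather than just set-theoretic, the cleanest route is probably to exhibit a surjection of Rees-type algebras or, more concretely, to observe that on the affine chart the ideal of $\PP\Lambda_{\XX/S}$ in $\BO_\XX[\xi_1, \dots, \xi_n]$ localized appropriately is generated by the $\xi_i \dv_j F - \xi_j \dv_i F$, which are exactly the relations defining $\Proj$ of the Rees algebra $\bigoplus_k \mathcal{J}^k$ once one knows the $\dv_i F$ generate $\mathcal{J}$; the surjectivity of the natural map $\Sym(\mathcal{J}) \twoheadrightarrow \bigoplus \mathcal{J}^k$ and the irreducibility of $\XX$ (so that $\mathcal{J} \ne 0$ on each component) force the kernel to be exactly the torsion, which is killed by passing to the conormal closure. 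I would then note the construction is independent of the choice of $F$ and of the relative coordinates (different choices multiply the row $(\dv_i F)$ by a unit and apply an invertible linear change to the $\xi_i$, neither of which changes the minor ideal up to the same unit/linear change), so the local identifications glue to a canonical global isomorphism $\PP\Lambda_{\XX/S} = \Bl_{\Sing(\XX/S)}\XX$.

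The main obstacle I anticipate is the scheme-theoretic (as opposed to merely set-theoretic) equality: one has to rule out embedded or non-reduced structure and confirm that the natural closed immersion between the two candidate subschemes of $\XX \times \PP^{n-1}$ is an isomorphism and not just a nilpotent thickening in one direction. This is handled by the fact that both sides are, by construction, closures of the \emph{same} reduced locally closed subscheme (the graph over $\Sm(\XX/S)$), so reducedness plus agreement on a dense open forces equality; but spelling this out carefully — in particular checking that $\Bl_\mathcal{J}\XX$ really is that closure, i.e.\ that the blow-up of a (possibly reducible, since $\XX$ need only be flat over $S$, though here $\XX$ is a variety hence integral) scheme along its Jacobian ideal has no components supported over $\Sing(\XX/S)$ — requires care. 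A secondary subtlety is that the statement allows $S$ to be a point, in which case "relative" reduces to "absolute" and $\dv_i = \dv/\dv x_i$ are the ordinary partials; the argument is uniform in this case and no separate treatment is needed, but it is worth a sentence to confirm the relative cotangent bundle and relative singular locus specialize correctly.
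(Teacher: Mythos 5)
Your proposal is correct and follows essentially the same route as the paper: identify the (relative) Gauss map in local coordinates as $x\mapsto[\dv_1 F(x):\cdots:\dv_n F(x)]$, observe that its base locus is exactly $\Sing(\XX/S)$, and invoke the standard identification of the blow-up along the ideal generated by the $\dv_i F$ with the closure of the graph of this rational map (the paper cites \cite[Sec 4.4]{Fulton1998} for precisely the Rees-algebra/graph-closure facts you spell out). The extra care you take with scheme-theoretic versus set-theoretic equality is welcome detail but not a different argument.
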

\begin{proof}
Let $Z=\Sing(\XX/S)$. The Gauss map is the rational map defined on the relative smooth locus by
\begin{align*}
    \mathscr{G}: \XX &\dashrightarrow \PP T^\vee(\mathcal{W}/S) \\
    x &\mapsto (T_x \XX_{f(x)})^\ast  \,.
\end{align*} 
Locally on some open set $U\subset \mathcal{W}$ there are coordinates $x_1,\dots,x_n$ (and $s$ in the relative case), and a function $F$ defining $\XX$. Then $\PP T^\vee (U/S)=U\times \PP^{n-1}$ and
\[ \mathscr{G}(x)=\left(x, \frac{\dv F}{\dv x_1}(x):\dots:\frac{\dv F}{\dv x_n}(x) \right)\,. \]
Thus $Z$ is exactly the base locus of $\GG$. It is well known that after blowing up $Z$ we can extend $\GG$ to an embedding $\tilde{\GG}$ such that the following diagram commutes \cite[Sec 4.4]{Fulton1998}
\begin{center}
    \begin{tikzcd}
    {\Bl_Z \XX} \arrow[d,"p"'] \arrow[dr,hook,"\tilde{\mathscr{G}}"] & \\
    {\XX} \arrow[r,dashrightarrow,"\mathscr{G}"']& {\PP T^\vee (\mathcal{W}/S)}
    \end{tikzcd}\,.
\end{center}
This completes the proof as 
\[\PP \Lambda_{\XX/S}=\overline{\mathscr{G}(\XX)}=\tilde{\mathscr{G}}(\Bl_Z \XX)\simeq \Bl_Z \XX \,. \]
\end{proof}
In the case of ppav's we have the following:
\begin{proposition}\label{Prop: Degree Lagrangian is Gauss degree}
Let $(A,\Theta)$ be a polarized abelian variety, with $\Theta$ reduced and let $\mathcal{G}:\Theta \dashrightarrow \PP T^\vee_0 A$ be the Gauss map. Then
\[ \PP \Lambda_\Theta = \Gamma_\GG \subset A\times \PP T^\vee_0 A \simeq \PP T^\vee A\,,\]
where $\Gamma_\GG$ is the closure of the graph of $\GG$. In particular:
\[ \deg \Lambda_\Theta= \deg \mathcal{G} \,. \]
\end{proposition}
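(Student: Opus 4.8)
The plan is to identify the projectivised conormal variety $\PP\Lambda_\Theta$ with the closure of the graph of the Gauss map, after which the equality of degrees falls out of the projection formula. First I would recall that for an abelian variety the relative cotangent bundle $T^\vee A$ is canonically trivial: translation gives $T^\vee A \simeq A \times T^\vee_0 A$, hence $\PP T^\vee A \simeq A \times \PP T^\vee_0 A$. Under this trivialisation, the usual Gauss map $\GG \colon \Theta \dashrightarrow \PP T^\vee_0 A$ (sending a smooth point $x$ to the translate to $0$ of the conormal line to $\Theta$ at $x$) is literally the second-coordinate component of the map $\Theta \dashrightarrow \PP T^\vee A$ appearing in the definition of $\Lambda_\Theta$; the first coordinate is just the inclusion $\Theta \hookrightarrow A$. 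Therefore the locally closed set $\{(x,\xi) : x \in \Sm(\Theta),\ \xi \bot T_x\Theta\}$ is exactly the graph of $\GG$ over $\Sm(\Theta)$, and taking Zariski closures on both sides gives $\PP\Lambda_\Theta = \Gamma_\GG$. (Here I use that $\Theta$ is reduced, so $\Sm(\Theta)$ is dense in $\Theta$ and the closure is taken inside an irreducible variety; this is what makes $\PP\Lambda_\Theta$ irreducible and equal to a single graph rather than a cycle with multiplicities.)

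Alternatively, and perhaps more cleanly, I would invoke Proposition \ref{Prop: Lagrangian is blowup} with $S$ a point: it gives $\PP\Lambda_\Theta = \Bl_{\Sing\Theta}\Theta$, and the commutative triangle there exhibits $\tilde\GG \colon \Bl_{\Sing\Theta}\Theta \hookrightarrow \PP T^\vee A$ as an embedding resolving $\GG$. Its image is by definition $\overline{\GG(\Theta)}$ computed inside $\PP T^\vee A$, which under the trivialisation above is the graph closure $\Gamma_\GG$. Either route establishes the first assertion.

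For the degree statement, recall $\deg \Lambda_\Theta \coloneqq \deg([\Lambda_\Theta]\cdot[A])$, the intersection with the zero section $A \hookrightarrow T^\vee A$. Passing to the projectivisation, this is computed as the intersection of $\PP\Lambda_\Theta$ with the fibre class of a point under the second projection $\mathrm{pr}_2 \colon A \times \PP T^\vee_0 A \to \PP T^\vee_0 A$; concretely, $\deg \Lambda_\Theta$ equals the number of points of $\PP\Lambda_\Theta$ lying over a general $\xi \in \PP T^\vee_0 A$. Since $\PP\Lambda_\Theta = \Gamma_\GG$ and $\mathrm{pr}_2$ restricted to $\Gamma_\GG$ is a completion of $\GG \colon \Theta \dashrightarrow \PP T^\vee_0 A$, the generic fibre has cardinality $\deg\GG$ (the generic degree, which is finite precisely when $(A,\Theta)$ is indecomposable; otherwise both sides are interpreted as $0$ or the statement is vacuous in the relevant sense). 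Hence $\deg\Lambda_\Theta = \deg\GG$.

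The only genuinely delicate point is matching the two notions of ``degree'' across the projectivisation: $\deg\Lambda_\Theta$ is defined via the zero section of the vector bundle $T^\vee A$, whereas the Gauss degree is a generic fibre count for a map to $\PP T^\vee_0 A$. I would settle this by the standard observation that the conormal cycle $\Lambda_\Theta$ is conical (invariant under scaling in the fibres of $T^\vee A$), so its intersection with the zero section can be read off from the intersection of its projectivisation $\PP\Lambda_\Theta \subset \PP(T^\vee A)$ with a general line-class fibre, i.e.\ with $\mathrm{pr}_2^{-1}(\xi)$ for general $\xi$ — this uses only that $\PP\Lambda_\Theta$ has the expected dimension $g-1$ and that $\mathrm{pr}_1 \colon \PP\Lambda_\Theta \to \Theta$ is birational so no component is contracted by $\mathrm{pr}_2$ to something lower-dimensional unless $\GG$ is not generically finite. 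Everything else is formal.
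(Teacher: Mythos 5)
Your proposal is correct and follows essentially the same route as the paper: the identification $\PP\Lambda_\Theta=\Gamma_\GG$ via the trivialisation $T^\vee A\simeq A\times T^\vee_0 A$ together with Proposition \ref{Prop: Lagrangian is blowup}, and then the degree equality by moving the zero section $A\times\{0\}$ to a general translate $A\times\{v\}$ and using conicality of $\Lambda_\Theta$ to reduce to counting points of $\PP\Lambda_\Theta$ over a general $\overline{v}$, i.e.\ a general fibre of $\GG$. No gaps.
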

\begin{proof}
The first part of the proposition follows immediately from the proof of Proposition \ref{Prop: Lagrangian is blowup}. Let $v\in T^\vee_0 A$ be general. Then
\begin{align*} 
[\Lambda_\Theta]\cdot[A\times \{0\}]&=[\Lambda_\Theta] \cdot [A\times \{v\}]\\
&=[\PP \Lambda_\Theta]\cdot[A\times \{\overline{v}\}]\\
&=\deg \GG\,.
\end{align*}
\end{proof}
\begin{example}\label{Ex: Deg Gauss map smooth theta}
    Let $(A,\Theta)\in\mathcal{A}_g$ and suppose that $\Theta$ is smooth, then
    \[ \deg \Lambda_\Theta= \deg \GG= g! \,.\]
    Recall that the Gauss map corresponds to the complete linear system $|L\restr{\Theta}|$ with $L=\BO_A(\Theta)$. If $\Theta$ is smooth, the Gauss map is defined everywhere thus
    \[ \deg \GG= [\Theta]^{\cdot g}=g! \]
    by Riemann Roch.
\end{example}
We end the section with the following computation of Lagrangian specialization:
\begin{example}\label{Ex: Lagrangian Specialization, rank $d$ singular quadric case}
Let $n\geq 3$ and consider the following deformation
\[ \begin{tikzcd} 
\XX=\{ x_1^2 + \cdots + x_{n-1}^2 + x_{n} s =0 \}\arrow[r,phantom,"\subset"] \arrow[dr] & \AF^n\times \AF \arrow[d,"q"] \\
& \AF 
\end{tikzcd}\,.
\]
Then
\[ \mathrm{sp}_0 \Lambda_{\XX/S} = \Lambda_{X}+2\Lambda_B+\Lambda_C \,, \]
where 
\begin{align*} 
    B&=\{s=x_1=\cdots=x_{n-1}=0 \} \quad \text{is the singular locus of $X=\XX_0$,} \\ 
  C&=\{s=x_1=\cdots=x_{n}=0 \} \quad \text{is the singular locus of $\XX$.}
\end{align*}
\end{example}
As this example is central in our proof, we will do the computation: By \ref{Prop: Lagrangian is blowup} we have
 \begin{align*}
 \Lambda_\XX &= \mathrm{Blow}_B \XX \\
 &=V(u_1 x_1+\cdots+u_{n} x_n, \, x_i u_j-x_j u_i,\, x_i u_n- s u_i)_{1\leq i,j\leq n-1}\\
 &\subset \XX \times \PP^{n-1} \,,
 \end{align*}
where $u_i$ are homogeneous coordinates on $\PP^{n-1}$. Specializing to $s=0$ and restricting to the open set $U=\{u_1\neq 0\}$ we have (with $a_i=u_i/u_1$ for $2\leq i \leq n$)
\begin{align*}
    \spp_0 \Lambda_\XX \restr{U} &= V\left(x_1^2(1+a_2^2+\cdots+a_{n-1}^2),\right.\\
    & \qquad \qquad \left. x_1(1+a_2^2+\cdots+a_{n-1}^2)+a_nx_n,\,\,
    x_1a_n\right)\\
    & \subset \AAA^2 \times \AAA^{n-1} \,.
\end{align*}
Thus $\spp_0 \Lambda_\XX$ has $3$ irreducible components:
\begin{align*}
    \Lambda_X&=V(1+a_2^2+\cdots + a_{n-1}^2, a_n) \subset \AAA^2\times \AAA^{n-1}  &\text{with multiplicity $1$,}\\
    \Lambda_B&=V(x_1,a_n) & \text{with multiplicity $2$,} \\
    \Lambda_C&=V(x_1,x_n) & \text{with multiplicity $1$.}
\end{align*}

\subsection{First Coefficients in the Lagrangian Specialization}
Let $q:\mathcal{W}\to S$ be a smooth morphism to a quasi-projective curve $S$. Let $\XX\subset \mathcal{W}$ be a variety of codimension $1$, flat over $S$. Let $0\in S$ be a point and $X=\XX_0$, $W=\mathcal{W}_0$ be the special fibers
\[
\XX \subset \mathcal{W} \overset{q}{\longrightarrow} S\,.\]
We have the following:
\begin{proposition}[Zeroth-Order Approximation of the Lagrangian Specialization]\label{Prop: Lagrangian specialization non-reduced special fiber case}
Let $X=\cup_i X_i$ be the scheme-theoretic irreducible components of $X$. We have
\[ \spp_0(\Lambda_{\XX/S})=\sum_i (\mult_{X_i} X) \cdot \Lambda_{X_{i,\red}} + \sum_{Z \varsubsetneq X_{i,\red} } m_Z \Lambda_Z  \,, \]
where $\mult_{X_i} X=\mathrm{len}(\BO_{X,X_i})$ is the geometric multiplicity and the last sum runs over subvarieties $Z\subset \Sing(X_{\red})\cup (\Sing(\XX)\cap X)$.
\end{proposition}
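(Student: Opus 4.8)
The plan is to reduce the statement to a purely local computation, using Remark \ref{Remark: local nature of Lagrangian specialization}, and then to identify the multiplicity attached to each component $X_{i,\red}$ of the special fiber by intersecting the Lagrangian specialization with a generic cotangent vector. Concretely, since $\XX\subset\mathcal{W}$ is a Cartier divisor, write $\XX=V(F)$ locally, with $\partial_1,\dots,\partial_n$ a local frame for $T(\mathcal{W}/S)$; then by Proposition \ref{Prop: Lagrangian is blowup} we have $\PP\Lambda_{\XX/S}=\Bl_{\Sing(\XX/S)}\XX$, embedded in $\PP T^\vee(\mathcal{W}/S)=\mathcal{W}\times\PP^{n-1}$ via the Gauss map $x\mapsto (\partial_1 F(x):\cdots:\partial_n F(x))$. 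The key observation is that the specialization $\spp_0$ is obtained by intersecting this blow-up (closure of the graph) with the fiber over $0\in S$, i.e. by setting the parameter $s=0$.

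First I would localize around the generic point $\eta_i$ of a fixed component $X_i$ of $X$. There, away from the lower-dimensional loci $\Sing(X_\red)\cup(\Sing(\XX)\cap X)$, the scheme $X$ is a non-reduced effective Cartier divisor on the smooth $n$-dimensional $\mathcal{W}$, cut out by $F\restr{s=0}=u\cdot g^{m}$ where $m=\mult_{X_i}X=\mathrm{len}(\BO_{X,X_i})$, $g$ is a local equation for $X_{i,\red}$, and $u$ is a unit; here we may even choose coordinates so that $g=x_1$. Then $\partial F/\partial x_j$ for $j\geq 2$ vanishes to order $\geq m$ along $x_1=0$, while $\partial F/\partial x_1$ vanishes to order exactly $m-1$. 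Hence, generically along $X_i$, the Gauss map $\GG_0=\GG\restr{s=0}$ is already defined (it sends the generic point to $(1:0:\cdots:0)$), so $X_{i,\red}$ occurs in $\spp_0\Lambda_{\XX/S}$ as $\Lambda_{X_{i,\red}}$, and the only thing to pin down is its coefficient.

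To compute that coefficient I would use Proposition \ref{Prop: Degree Lagrangian is Gauss degree}-style reasoning: the coefficient of $\Lambda_{X_{i,\red}}$ in $\spp_0\Lambda_{\XX/S}$ equals the intersection multiplicity, along $\Lambda_{X_{i,\red}}$, of the cycle $[\Lambda_{\XX/S}\cap T^\vee\mathcal{W}_0]$. Since $\Lambda_{\XX/S}$ is irreducible and maps generically finitely to $\XX$, and $\XX$ is flat over $S$ with $\XX_0=X=\sum m_i X_i$ as a divisor, the flat pullback $[\Lambda_{\XX/S}]\cdot[T^\vee\mathcal{W}_0]$ is the cycle associated to the scheme-theoretic fiber; its component over $X_{i,\red}$ appears with multiplicity $\mathrm{len}(\BO_{X,X_i})=\mult_{X_i}X$ by the compatibility of the blow-up $\Bl_{\Sing(\XX/S)}\XX\to\XX$ with restriction to $s=0$ at the generic point of $X_i$ (where the blow-up center is disjoint from $\eta_i$, so the blow-up is an isomorphism there and the length is preserved). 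Finally, all remaining components of $\spp_0\Lambda_{\XX/S}$ lie over the locus where either $\GG_0$ is undefined or $X_\red$ is singular, i.e. over $\Sing(X_\red)\cup(\Sing(\XX)\cap X)$, and these contribute the error term $\sum_{Z\subsetneq X_{i,\red}}m_Z\Lambda_Z$; by the general structure theorem for Lagrangian specialization quoted in the excerpt (\cite{FultonKleimanMacPherson1983}, \cite{Trng1988LimitesDT}) these $m_Z$ are automatically $\geq 0$, so no sign issues arise.

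The main obstacle I anticipate is the bookkeeping at the generic point of $X_i$: one must verify carefully that, after setting $s=0$, the ideal of the blow-up $\Bl_{\Sing(\XX/S)}\XX$ in $\XX\times\PP^{n-1}$ restricts — generically along $X_i$, i.e. after localizing at $\eta_i$ and discarding the exceptional/singular contributions — to exactly the ideal of $\Lambda_{X_{i,\red}}$ taken with the scheme structure of length $m$ in the $s$-direction, and not with some other embedded or non-reduced structure in the $\PP^{n-1}$-direction. This is essentially the same local model as in Example \ref{Ex: Lagrangian Specialization, rank $d$ singular quadric case}, where the factor $x_1^2$ (resp. $x_1$) in the three generators produced the multiplicity $2$ on $\Lambda_B$: here one repeats that computation with $x_1^m$ in place of $x_1^2$, using that the other partial derivatives are divisible by a strictly higher power of $x_1$, so that on the chart $\{u_1\neq 0\}$ the generator coming from $\partial_1 F$ contributes a factor $x_1^{m-1}$ and the linear (Euler-type) relation contributes one more $x_1$, giving total multiplicity $m$ along $\{x_1=0\}$ and multiplicity $1$ on the strict transform $\Lambda_{X_{i,\red}}$. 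Once this local identification is made, Remark \ref{Remark: local nature of Lagrangian specialization} patches the computations over all generic points $\eta_i$ and the proposition follows.
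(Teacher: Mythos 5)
Your overall architecture is the same as the paper's: localize at the generic point $\eta_i$ of $X_i$, argue that $\PP\Lambda_{\XX/S}\to\XX$ is an isomorphism over a neighbourhood of $\eta_i$ so that the coefficient of $\Lambda_{X_{i,\red}}$ in $\spp_0\Lambda_{\XX/S}$ is the multiplicity of the scheme-theoretic fibre $\XX\restr{0}$ along $X_{i,\red}$, namely $\mathrm{len}(\BO_{X,X_i})$, and absorb everything else into the error term supported on $\Sing(X_\red)\cup(\Sing(\XX)\cap X)$. But the justification you give for the central step is false. You claim the blow-up centre $\Sing(\XX/S)$ is disjoint from $\eta_i$, ``so the blow-up is an isomorphism there.'' This fails in exactly the nontrivial case $m=\mult_{X_i}X\geq 2$: writing $F=f^k+s^l g$ locally with $X_\red=V(f)$, $k=m\geq 2$ and $l\geq 1$, every relative partial $\dv_j F=kf^{k-1}\dv_j f+s^l\dv_j g$ vanishes along $\{f=s=0\}$, so $X_{i,\red}\subset\Sing(\XX/S)$ and the relative Gauss map is undefined at $\eta_i$ as written. (When $m=1$ the component is generically smooth and there is nothing to prove.)

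What rescues the argument --- and this is the actual content of the paper's proof --- is not that the centre misses $\eta_i$ but that it is a \emph{Cartier divisor} of $\XX$ near $\eta_i$, so the blow-up is still an isomorphism: since $X_\red$ is smooth one has $\langle f,\dv_i f\rangle=\langle 1\rangle$, hence $f^{k-1}+s^lh$ lies in $I\coloneqq\langle F,\dv_1F,\dots,\dv_nF\rangle$ for some $h$, hence $s^l(g-fh)\in I$ with $g-fh$ a unit after shrinking (one restricts to $\{g\neq 0\}$, which only discards a subset of $\Sing(\XX)\cap X$), so $I=\langle f^{k-1},F\rangle$ is principal on $\XX$. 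Then $\spp_0\Lambda_{\XX/S}=\XX\restr{0}=\mathrm{len}(\BO_{X,X_\red})\cdot X_\red$ near $\eta_i$, as desired. Your fallback computation --- ``repeat Example \ref{Ex: Lagrangian Specialization, rank $d$ singular quadric case} with $x_1^m$ in place of $x_1^2$; the generator from $\dv_1F$ contributes $x_1^{m-1}$ and the Euler relation one more $x_1$, total $m$'' --- does not fill this hole: multiplicities of individual generators of an ideal do not add up to the length of the quotient, and that Example treats a \emph{reduced} special fibre (it is the model for the first-order term in Theorem \ref{theorem: Second order Lagrangian Specialization}, not for a non-reduced component). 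You need either the principality argument above or an honest length computation of the fibre of $\Bl_{\Sing(\XX/S)}\XX$ over $S$ localized at $\eta_i$.
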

\begin{proof}
By the principle of Lagrangian specialization \cite[Lem 2.3]{KraemerCodogni}, we have
\begin{equation}\label{Equ: Prop non-reduced fiber Lag specialization} \spp_0(\Lambda_{\XX/S})=\sum_i m_{X_i} \Lambda_{X_{i,\red}}+\sum_{Z\subset \Sing(X)} m_Z \Lambda_Z \,,
\end{equation}
for some coefficients $m_{X_i}$, $m_Z$. The definition of the coefficients $m_{X_i}$ is local, thus we can assume that we are working on an affine neighborhood where $X_\red$ is smooth and irreducible. Let $x_1,\dots, x_n,s$ be coordinates on $\mathcal{W}$ such that $q$ is the projection onto $s$. $\XX$ is defined locally by a function $F(x_1,\dots,x_n,s)$. We will show that the ideal of the relative singular locus $\Sing(\XX/S)$
\[ I\coloneqq \left\langle F ,\frac{\dv F}{\dv x_i} \right\rangle_{1\leq i \leq n} \]
is locally principal in the affine coordinate ring of $\XX$ away from a strict subset $Z\subseteq X\cap \Sing(\XX)$. If $X$ is reduced, then it is smooth and $I=\langle 1 \rangle$ so there is nothing to prove. We assume now that $X$ is non-reduced. $X$ is a Cartier divisor in $W$, thus defined by the vanishing of $f^k$ for some $k\geq 2$, where $X_\red$ is defined by the vanishing of $f(x_1,\dots,x_n)$. We have
\[ F=f^k+s^l\cdot g \,, \]
for some function $g$ defined on $\mathcal{W}$ not divisible by $s$, and $l\geq 1$. $g$ does not vanish identically on $X_\red$, else $g$ would be divisible by $f$ and $\XX$ would not be integral. Notice that $V(g\restr{X})\subset \Sing(\XX)\cap X$ thus we can restrict to $\{g\neq 0\}$ and assume $g$ is a unit. We have
\begin{align*}
    I&=\left\langle f^k+s^l\cdot g , \frac{\dv f}{\dv x_i} f^{k-1}+ s^l \frac{\dv g}{\dv x_i} \right\rangle_{1\leq i \leq n}\,.
\end{align*}
As $X_\red$ is smooth, we have $\langle f, \dv_i f \rangle=\langle 1 \rangle$, thus $(f^{k-1}+s^l\cdot h)\in I$ for some function $h$. Thus $s^l(g-fh)\in I$. As $g-fh$ is a unit near $X$ after restricting to a smaller open set we can assume
\[ s^l\in I\,, \quad \text{thus} \quad I=\langle f^{k-1},F\rangle \,. \]
In particular, $\Sing(\XX/S)$ is principal in $\XX$ (defined by $f^{k-1}$), thus by Proposition \ref{Prop: Lagrangian is blowup} we have
\begin{align*}
    \spp_0(\Lambda_{\XX/S})&=(\Bl_{\Sing{\XX/S}} \XX)\restr{0} \\
    &\simeq \XX\restr{0}\\
    &=X\\
    &= \mathrm{len}(\BO_{X,X_\red}) \cdot X_{\red}\\
    &\simeq\mathrm{len}(\BO_{X,X_\red}) \cdot \Lambda_{X_{\red}} \,. 
\end{align*} 
This proves the claim on the coefficients of the $\Lambda_{X_{i,\red}}$. As we only needed to restrict to complements of closed sets in $\Sing(X_\red)\cup \Sing(\XX)$ during the proof, we have that every other cycle $\Lambda_Z$ in the specialization must verify $Z\subset \Sing(X_\red)\cup (\Sing(\XX)\cap X)$.
\end{proof}

When the special fiber is reduced, we can go one step further:
\begin{theorem}[First-Order Approximation of the Lagrangian Specialization]\label{theorem: Second order Lagrangian Specialization}
Assume that $X$ is reduced and $\XX\restr{s}$ is smooth for $s\neq 0$. Let $\Sing(X)=\cup_i Z_i$ be the decomposition of the singular locus into its scheme-theoretic irreducible components. Then
\[ \spp_0 \Lambda_{\XX/S}=\Lambda_X+\sum_i (\mult_{Z_i} X ) \cdot \Lambda_{Z_{i,red}} + \sum_{Y\varsubsetneq Z_{i,\red}} m_{Y} \Lambda_{Y} \]
where $\mult_Z X$ is the Samuel multiplicity of $Z$ in $X$ as defined for example in \cite[Sec. 4.3]{Fulton1998}. 
\end{theorem}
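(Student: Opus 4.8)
The plan is to reduce to a local computation, just as in the proof of Proposition~\ref{Prop: Lagrangian specialization non-reduced special fiber case}, and to identify the multiplicity of $\Lambda_{Z_{i,\red}}$ in $\spp_0\Lambda_{\XX/S}$ with the Samuel multiplicity $\mult_{Z_i}X$. The coefficient of $\Lambda_X$ is $1$: since $X$ is reduced and $\XX\restr{s}$ is smooth for $s\neq 0$, the generic point of $X$ is a smooth point of $\XX$ (the relative singular locus $\Sing(\XX/S)$ cannot contain $X$, else $\XX$ would be singular along a divisor in each nearby fiber), so near the generic point of $X$ the Gauss map is already an embedding and $\spp_0\Lambda_{\XX/S}$ restricts there to $\Lambda_X$ with multiplicity one. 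This also shows every other cycle $\Lambda_Y$ appearing must have $Y\subset\Sing(X)\cup(\Sing(\XX)\cap X)$, and the contribution of $\Sing(\XX)\cap X$ is harmless because, after an argument as in the previous proof, its extra components are swept into the error terms $\sum_{Y\varsubsetneq Z_{i,\red}}m_Y\Lambda_Y$.

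The heart of the matter is the coefficient of $\Lambda_{Z_{i,\red}}$. Fix a component $Z=Z_i$ and, by Remark~\ref{Remark: local nature of Lagrangian specialization}, work on an affine open $U\subset\mathcal{W}$ where $Z_{\red}$ is smooth and irreducible and where we may discard closed subsets of $\Sing(X)$ of lower dimension and of $\Sing(\XX)\cap X$. Choose coordinates $x_1,\dots,x_n,s$ on $\mathcal{W}$ with $q$ the projection to $s$, and let $F(x_1,\dots,x_n,s)$ define $\XX$, so $f(x):=F(x,0)$ defines $X$. By Proposition~\ref{Prop: Lagrangian is blowup}, $\PP\Lambda_{\XX/S}=\Bl_{\Sing(\XX/S)}\XX$ where $\Sing(\XX/S)=V(F,\dv_1F,\dots,\dv_nF)$, and $\spp_0\Lambda_{\XX/S}$ is the restriction of this blow-up to $s=0$. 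Now I would analyze the ideal $I=\langle F,\dv_1 F,\dots,\dv_n F\rangle$ in $\BO_\XX$ near the generic point of $Z$: after restricting to the smooth locus of $Z_{\red}$ and using that $X$ is reduced, the relative Jacobian ideal $J=\langle f,\dv_1 f,\dots,\dv_n f\rangle$ of $X$ has the property that $\BO_X/J$ has support exactly $\Sing(X)$; and because $X$ is reduced while $\XX\restr{s}$ is smooth off $s=0$, we may argue $I\equiv J\pmod{(s)}$ up to lower-order corrections, so that the fiber over $0$ of $\Bl_{\Sing(\XX/S)}\XX$ agrees generically along $Z$ with $\Bl_{\Sing(X)}X$. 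The multiplicity with which $Z_{\red}$ — equivalently $\Lambda_{Z_{\red}}$ — appears in $\Bl_{\Sing(X)}X$ restricted to (a general transverse slice through) the generic point of $Z$ is by definition the $e$-multiplicity of the Jacobian ideal, which is precisely the Samuel multiplicity $\mult_Z X$ as in \cite[Sec.~4.3]{Fulton1998}: the blow-up of a Cartier divisor $X$ along its singular subscheme has exceptional divisor whose pushforward to $X$ is $\sum_i(\mult_{Z_i}X)\,Z_{i,\red}+(\text{lower})$. Transporting this through the identification $\PP\Lambda_{\XX/S}\restr0\cong(\Bl X)\restr0$ and comparing cycle classes gives the coefficient $\mult_{Z_i}X$.

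Concretely, the cleanest route is: (i) pass to a general point $\zeta$ of $Z$ and a general normal slice, reducing to $\XX$ of the form studied in Example~\ref{Ex: Lagrangian Specialization, rank $d$ singular quadric case} only after a further reduction — but in general one cannot assume the singularity is transversal $\mathrm{A}_1$, so instead (ii) argue purely via the blow-up: $\spp_0\Lambda_{\XX/S}=(\Bl_{\Sing(\XX/S)}\XX)\restr0$ as a cycle, push forward to $W$, and observe that the coefficient of $Z_{\red}$ equals the length of $\BO_{\XX,\,\text{gen. pt. of }Z}$-module $\BO_\XX/I$ localized at $Z$, which by step (i)'s comparison $I\equiv J$ equals $\mathrm{len}\big((\BO_X/J)_Z\big)=\mult_Z X$, the last equality being the standard description of the Samuel multiplicity of a hypersurface along a smooth (at the generic point) subvariety as the colength of the Jacobian ideal. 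The main obstacle I anticipate is the congruence $I\equiv J\pmod{\text{negligible}}$: one must verify that the $s$-dependent terms in $F$ do not alter the blow-up center generically along $Z$, using the hypothesis that the generic nearby fiber $\XX\restr s$ is smooth to control $\dv F/\dv s$ and to exclude that $\Sing(\XX)$ meets $X$ in codimension zero inside $Z$; this is the place where both hypotheses ($X$ reduced, $\XX\restr s$ smooth) are genuinely used, and it requires care analogous to, but more delicate than, the ``$s^l\in I$'' manipulation in the proof of Proposition~\ref{Prop: Lagrangian specialization non-reduced special fiber case}.
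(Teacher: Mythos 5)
Your reduction to the generic point of a component $Z$ of $\Sing(X)$, the coefficient $1$ of $\Lambda_X$, and the confinement of all remaining terms to $\Sing(X)$ match the paper; that much is the content of Proposition \ref{Prop: Lagrangian specialization non-reduced special fiber case}. The gap is in the one step that actually matters: the identification of the coefficient of $\Lambda_{Z_{\red}}$ with $\mult_Z X$. You assert that this coefficient equals the length of $(\BO_\XX/I)$ at the generic point of $Z$, with $I=\langle F,\dv_1F,\dots,\dv_nF\rangle$, and that this colength of the Jacobian ideal is the Samuel multiplicity ``by the standard description of the Samuel multiplicity of a hypersurface along a smooth subvariety''. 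Neither assertion is true, and the paper's own Example \ref{Ex: Lagrangian Specialization, rank $d$ singular quadric case} refutes both: for $F=x_1^2+\cdots+x_{n-1}^2+x_ns$ the ideal $I$ is the reduced ideal $\langle x_1,\dots,x_{n-1},s\rangle$ and the Jacobian ideal of $X$ is $\langle x_1,\dots,x_{n-1}\rangle$, so both colengths you invoke are $1$, whereas the correct coefficient of $\Lambda_B$ is $2$. The Samuel multiplicity of a hypersurface $V(f)$ along a subvariety that is smooth at its generic point is the order of vanishing of $f$ along it (here $2$), not the colength of the Jacobian ideal (already for isolated singularities the latter is the Tjurina number, which differs from the multiplicity in general).

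The missing mechanism is that blowing up does not commute with restriction to the fibre: $(\Bl_{\Sing(\XX/S)}\XX)\restr{0}$ is $\Bl_{\Sing(X)}X$ \emph{plus} extra components supported on the exceptional divisor $E$, and it is precisely the multiplicity of those extra components that must be computed; no congruence $I\equiv J\pmod{s}$ at the level of ideals can detect it. The paper extracts the coefficient by intersection theory on $E$: writing $f^\ast[X]=[\PP\Lambda_X]+m_Z[\PP\Lambda_{Z_\red}]$ in $\CH^1(\PP\Lambda_{\XX/S})$, capping with $c_1(\BO(1))^{d-2}$ and pushing forward to $Z$ sends $[\PP\Lambda_X]$ to $\mult_ZX\cdot[Z_\red]$ (Fulton's Segre-class characterization of the multiplicity, applied to the exceptional divisor of $\Bl_{\Sing(X)}X\hookrightarrow\Bl_{\Sing(\XX/S)}\XX$) and sends $[\PP\Lambda_{Z_\red}]$ to $-[Z_\red]$, since its fibres over $Z_\red$ are linear spaces. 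On the other hand $j^\ast f^\ast[X]=g^\ast i^\ast[X]=0$ because $[X]=q^\ast[0]$ and the blowup centre is contained in the fibre over $0$. The resulting relation $0=(\mult_ZX-m_Z)[Z_\red]$ is what forces $m_Z=\mult_ZX$; nothing in your proposal substitutes for this, and the length computation you offer in its place gives the wrong number.
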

\begin{remark}
    If the singular locus is $0$-dimensional, this computes the full Lagrangian specialization.
\end{remark}
\begin{proof}
By Proposition \ref{Prop: Lagrangian specialization non-reduced special fiber case}, we have
\[ \spp_0 \Lambda_{\XX/S}=\Lambda_X+\sum_i \left( m_{Z_i} \Lambda_{Z_{i,\red}} + \sum_{Y\subset Z_{i,\red}} m_{Y} \Lambda_{Y}\right) \]
for some coefficients $m_{Z_i}$, $m_Y$. The coefficients $m_{Z_i}$ are defined locally, thus after restricting to an open set we can assume that $Z=\Sing(X)$ is irreducible and
\[ \spp_0 \Lambda_{\XX/S}=\Lambda_X+m_{Z}\Lambda_{Z_{\red}}  \,. \]
Let $\mathcal{Z}\coloneqq \Sing(\XX/S)$. Note that by assumption $\mathrm{Supp}(\mathcal{Z})=\mathrm{Supp}(Z)$ and $Z=\mathcal{Z}\cap X$. By Proposition \ref{Prop: Lagrangian is blowup} we have $\PP \Lambda_{\XX/S}=\Bl_{\mathcal{Z}} \XX$. We have the following two fiber squares, 
\begin{center}
    \begin{tikzcd}
     \spp_0(\PP \Lambda_{\XX/S}) \arrow[r,hook] \arrow[d] & \PP \Lambda_{\XX/S} \arrow[d,"f"] \arrow[r,hookleftarrow, "j"] & E  \arrow[d, "g"] \\
     X \arrow[r,hook] & \XX \arrow[r,hookleftarrow, "i"] & \mathcal{Z} 
    \end{tikzcd}
\end{center}
where the right square is the diagram associated to the blowup $f$ and $E$ is the exceptional divisor. By definition, we have
\[ f^\ast [X] = [\PP\Lambda_{\XX/S}\restr{0}] \eqqcolon \spp_0(\PP\Lambda_{\XX/S})=[\PP\Lambda_X]+m_Z[\PP\Lambda_{Z_\red}] \in \CH^1(\PP\Lambda_{\XX/S})\,. \]
Let $\BO(1)=\BO_{\PP\Lambda_{\XX/S}}(-E)\restr{E}$ denote the canonical bundle on $E$ associated to the blowup. By \ref{Prop: Lagrangian is blowup} and \cite[B.6.9]{Fulton1998} there is a canonical embedding 
\[ \PP\Lambda_X = Bl_Z X \hookrightarrow Bl_\mathcal{Z} \XX = \PP \Lambda_{\XX/S} \]
and the restriction of the exceptional divisor $E'\coloneqq E\cap Bl_Z X$ is the exceptional divisor of $Bl_Z X$. Thus
\begin{align*} 
g_\ast \left(j^\ast [\PP\Lambda_X] \cap c_1(\BO(1))^{d-2}\right) &=g_\ast\left( j^\ast [\Bl_{Z'} X] \cap c_1(\BO(1))^{d-2} \right)  \\
&= g_\ast \left( [E'] \cap c_1(\BO(1))^{d-2} \right)  \\
&= \mult_{Z} X \cdot [Z_\red]\in \CH^0(Z)  \,,
\end{align*}
by \cite[Sec. 4.3]{Fulton1998}. Notice that $\PP\Lambda_{Z_\red}\subset E$. We make the abuse of notation to write $[\PP\Lambda_{Z_\red}]$ for the cycle in $\CH^\bullet(\PP\Lambda_{\XX/S})$ as well as $\CH^\bullet(E)$, when it is clear from the context which Chow ring we mean. Recall $\BO_{\PP\Lambda_{\XX/S}}(E)\restr{E}=\BO(-1)$, thus
\begin{align*}
    g_\ast \left( j^\ast [\PP\Lambda_{Z_\red}] \cap c_1(\BO(1))^{d-2} \right)
    &= g_\ast \left( - [\PP\Lambda_{Z_\red}] \cap c_1(\BO(1))^{d-1} \right) \\
    &=-[Z_\red]\in \CH^0(Z)\,,
\end{align*}
as a generic fiber of $\PP\Lambda_{Z_\red}\to Z_\red$ is a $(d-1)$-plane. By definition $[X]=q^\ast[0]\in \CH^1(\XX)$. Since $\mathcal{Z}$ is supported on a fiber of $q$ we have
\[0= i^\ast q^\ast [0]=i^\ast [X]\,.\]
Putting all of this together we have
\begin{align*}
    0 &= g_\ast \left( g^\ast i^\ast [X] \cap c_1(\BO(1))^{d-2} \right) \\
     &= g_\ast \left( j^\ast f^\ast [X] \cap c_1(\BO(1))^{d-2} \right) \\
    &= g_\ast \left( j^\ast(\Lambda_X+m_Z\Lambda_{Z_\red}) \cap c_1(\BO(1))^{d-2} \right) \\
    &=(\mult_{Z} X - m_Z)[Z_\red] \in \CH^0(Z) \,.
\end{align*}
Thus $m_Z=\mult_{Z} X $, as $\CH^0(Z)=[Z_{\red}] \cdot \ZZ$.
\end{proof}

\subsection{Application to Theta Divisors}
We have the following corollary to Theorem \ref{theorem: Second order Lagrangian Specialization}:
\begin{corollary}\label{Cor: bound on degree Gauss map by Lag specialization}
    Let $(A,\Theta)\in \AbVar_g$ and $\cup_i Z_i = \Sing(\Theta)$ the decomposition of the singular locus of Theta into its scheme-theoretic irreducible components. Then
    \[ \deg \GG \leq g!-\sum_i (\mult_{Z_i} \Theta )\deg (\Lambda_{Z_{i,\red}} )\,,\]
    where $\GG:\Theta \dashrightarrow \PP^{g-1}$ is the Gauss map.
\end{corollary}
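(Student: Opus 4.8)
The plan is to realise $(A,\Theta)$ as the special fibre of a one-parameter degeneration of ppav's with smooth general fibre, and then to apply Theorem \ref{theorem: Second order Lagrangian Specialization}, using that the ``Gauss degree'' $\deg\Lambda$ of a relative conormal variety is preserved under Lagrangian specialization. For the family: the locus of ppav's with a singular theta divisor is the Andreotti--Mayer divisor, a proper closed subset of $\AbVar_g$, so a general curve through $[(A,\Theta)]$ meets it only in that point. Taking such a curve, pulling back to a level cover of $\AbVar_g$ of sufficiently high level so that a universal abelian scheme with a relative symmetric theta divisor exists --- which changes neither $\Theta$, nor $\Sing(\Theta)$ with its multiplicities, nor $\deg\GG$ --- and shrinking if necessary, one obtains a smooth quasi-projective curve $S$, a point $0\in S$, a smooth proper morphism $q\colon\mathcal W\to S$ and a relative theta divisor $\XX\subset\mathcal W$, flat over $S$, with $\XX\restr 0=\Theta\subset A=\mathcal W\restr 0$ and with $\XX\restr s$ a smooth theta divisor for $s\ne 0$. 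As $\Theta$ is reduced, the hypotheses of Theorem \ref{theorem: Second order Lagrangian Specialization} hold.

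Next I would show that $\deg\bigl(\spp_0\Lambda_{\XX/S}\bigr)=g!$. Intersect $\Lambda_{\XX/S}$ with the zero section $\mathcal W\hookrightarrow T^\vee(\mathcal W/S)$, a regular embedding of codimension $g$; the resulting refined-Gysin class is a $1$-cycle supported on a closed subscheme of $\mathcal W$, hence proper over $S$. By conservation of number --- compatibility of refined Gysin pullbacks with proper pushforward and with restriction to the fibres of $q$ --- its degree over each $s\in S$ is independent of $s$ and equals $\deg\bigl(\spp_s\Lambda_{\XX/S}\bigr)$. For general $s$ we have $\spp_s\Lambda_{\XX/S}=\Lambda_{\XX\restr s}$, and since $\XX\restr s$ is a smooth theta divisor this degree is $\deg\GG_{\XX\restr s}=g!$ by Proposition \ref{Prop: Degree Lagrangian is Gauss degree} and Example \ref{Ex: Deg Gauss map smooth theta}. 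Hence $\deg\bigl(\spp_0\Lambda_{\XX/S}\bigr)=g!$.

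To conclude, write $\Sing(\Theta)=\bigcup_i Z_i$ for the scheme-theoretic irreducible components; Theorem \ref{theorem: Second order Lagrangian Specialization} gives
\[ \spp_0\Lambda_{\XX/S}=\Lambda_\Theta+\sum_i(\mult_{Z_i}\Theta)\,\Lambda_{Z_{i,\red}}+\sum_{Y\varsubsetneq Z_{i,\red}}m_Y\,\Lambda_Y \]
with all $m_Y>0$. For any closed subvariety $Z\subseteq A$ one has $\deg\Lambda_Z=[\Lambda_Z]\cdot[A\times\{0\}]=[\Lambda_Z]\cdot[A\times\{v\}]\ge 0$ for generic $v\in T_0^\vee A$: using the translation-invariant splitting $T^\vee A\cong A\times T_0^\vee A$ the last expression is a proper intersection of effective cycles. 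Taking degrees in the displayed identity and discarding the nonnegative sum over $Y$,
\[ g!=\deg\bigl(\spp_0\Lambda_{\XX/S}\bigr)\ \ge\ \deg\Lambda_\Theta+\sum_i(\mult_{Z_i}\Theta)\,\deg\Lambda_{Z_{i,\red}}\,, \]
and $\deg\Lambda_\Theta=\deg\GG$ by Proposition \ref{Prop: Degree Lagrangian is Gauss degree}, which rearranges to the asserted bound.

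The step I expect to be the main obstacle is the conservation of $\deg\Lambda$ in the second paragraph: since the zero section contains the entire zero section of each $\Lambda_{Z_{i,\red}}$, the intersection defining $\deg\Lambda$ is genuinely excessive and cannot be handled by a dimension count, so the invariance must be extracted from refined intersection theory --- and this is precisely where the properness of $q$, restricted to the relevant support, is used. The construction of the universal family in the first paragraph is routine but worth spelling out with some care, since $\AbVar_g$ is only a coarse moduli space.
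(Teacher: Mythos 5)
Your proposal is correct and follows essentially the same route as the paper: deform to a family with smooth general theta divisor, use invariance of $\deg\Lambda$ under Lagrangian specialization (the paper cites \cite[Prop.~2.4]{KraemerCodogni} where you sketch the refined-intersection argument), apply Theorem \ref{theorem: Second order Lagrangian Specialization}, and discard the remaining terms using $\deg\Lambda_Y\geq 0$ (the paper cites \cite[Lem.~5.1]{KraemerCodogni}). The extra details you supply on constructing the family and on nonnegativity are consistent with what those references provide.
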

\begin{proof}
    Let $(A_S,\Theta_S)$ be a $1$-dimensional deformation of $(A,\Theta)$, i.e. an abelian scheme over a smooth curve $S$ with special fiber $(A,\Theta)$, such that $\Theta_s$ is smooth for general $s$. The degree is invariant in flat families \cite[Prop. 2.4]{KraemerCodogni}, thus
    \begin{align*}
        g!&= \deg \Lambda_{\Theta_s} & \text{(Ex. \ref{Ex: Deg Gauss map smooth theta})} \\
        &= \deg(\spp_0 \Lambda_{\Theta_S/S})& \\
        &= \deg( \Lambda_\Theta)+ \sum_i (\mult_{Z_i} \Theta ) \deg(\Lambda_{Z_{i,\red}}) +\sum_{Z\subset Z_i} m_Z \deg( \Lambda_Z ) & \text{(Thm. \ref{theorem: Second order Lagrangian Specialization})}\\
        &\geq \deg( \GG)+ \sum_i (\mult_{Z_i} \Theta ) \deg(\Lambda_{Z_{i,\red}}) \,.&
    \end{align*}
    The last assertion follows from the fact that $\deg \Lambda_Z \geq 0$ for a subvariety $Z$ of an abelian variety \cite[Lem. 5.1]{KraemerCodogni}.
\end{proof}
\begin{remark}
    If the singular locus of $\Theta$ is $0$-dimensional, there are no other terms in the Lagrangian specialization and one recovers a result of \cite[Rem 2.8]{Gru17}
    \[ \deg \GG= g!- \sum_{z\in \Sing(\Theta)} \mult_z \Theta \,. \]
\end{remark}

\section{Theta Divisors with Transversal \texorpdfstring{$\mathrm{A}_1$}{A1} Singularities}\label{Sec: Theta Divisors with a Smooth Singular Locus}
The idea of the proof of Theorem \ref{Theorem: Gauss degree theta divisor with smooth singular locus} is to deform a given ppav to a ppav with a smooth theta divisor. Using the heat equation verified by theta functions, it is then possible to compute the Lagrangian specialization explicitly. Finally, we use the fact that the degree of Lagrangian cycles is invariant in flat families.

\subsection{Deformation of PPAV's}\label{Sec: Deformation of ppav's}
Let $(A,\Theta)\in \AbVar_g$ and denote by $T_A$ the tangent bundle on $A$. It is well-known that there is a canonical identification between $\HH^0(A,\Sym_2(T_A))$ and infinitesimal deformations of $(A,\Theta)$ \cite{Welters1983} and \cite[Sec. 3]{Ciliberto1999}. Specifically, given 
\[\mathscr{D}=\sum_{i,j} \lambda_{ij} \frac{\dv^2}{\dv z_i \dv z_j}\in \HH^0(A,\Sym_2(T_A))\,,\]
there exists a deformation of $(A,\Theta)$, i.e.\ an abelian scheme over a smooth quasi-projective curve $S$
\[ \Theta_S\subset A_S \to S \,,\]
such that the fiber above $0\in S$ is $(A,\Theta)$. Moreover, locally there are coordinates $(z_1,\dots,z_g,s)$ on $A_S$ such that the map to $S$ is given by the projection onto the last coordinate, and if $\theta$ is the theta-function defining $\Theta$ we have
\begin{equation}\label{Equ: heat equation theta} \mathscr{D}(\theta)=\sum_{i,j}\lambda_{ij} \frac{\dv^2 \theta}{\dv z_i \dv z_j} = \frac{\dv \theta}{\dv s} \,. \end{equation}
We call this a deformation in the $\mathscr{D}$ direction.

\subsection{Computation of the Gauss Degree}
Let $(X,0)=V(f)\subset (\CC^n,0)$ be a hypersurface singularity germ. The \emph{scheme theoretic} singular locus of $X$ is defined as
\begin{equation}\label{Equ: scheme theoretic singular locus} \Sing(X)\coloneqq V\left(f, \frac{\dv f}{\dv x_1},\dots,\frac{\dv f}{\dv x_n} \right)\subset (\CC^n,0)\,,
\end{equation}
where $x_1,\dots,x_n$ are some coordinates on $\CC^n$. We have the following:
\begin{proposition}\label{Prop: Appendix: Hessian max rank is equiv to smooth sing locus}
Let $(X,0)=V(f)\subset (\CC^n,0)$ be a hypersurface singularity germ, and $d= \codim_{\CC^n} \Sing(X)$. The Hessian of $f$
\[ H(f) \coloneqq \left( \frac{\dv^2 f}{\dv x_i \dv x_j} \right)_{1\leq i,j\leq n} \]
is of rank at most $d$. The following conditions are equivalent:
\begin{enumerate}[i)]
    \item $\Sing(X)$ is smooth at $0$.
    \item $H(f)$ is of rank $d$ at $0$.
    \item There is a holomorphic change of coordinates $z_1,\dots,z_n$ such that
    \[ f(z)=z_1^2+\cdots+z_d^2 \,.\]
\end{enumerate}
In this case, we say that $X$ has a transversal $\mathrm{A}_1$ singularity at $0$.
\end{proposition}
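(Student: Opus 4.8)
The plan is to prove the chain of equivalences by going $iii) \Rightarrow ii) \Rightarrow i) \Rightarrow iii)$, after first establishing the preliminary rank bound. For the rank bound, I would work in coordinates adapted to the singular locus: since $\Sing(X) = V(f, \partial_1 f, \dots, \partial_n f)$ has codimension $d$, and the gradient $\nabla f$ vanishes along $\Sing(X)$, all $n$ partials $\partial_i f$ lie in the ideal of $\Sing(X)$. Differentiating once more, the Hessian entries $\partial_i \partial_j f$, evaluated at $0$, describe the linear parts of the $\partial_i f$; since the scheme $V(\partial_1 f, \dots, \partial_n f)$ (which contains $\Sing(X)$, and near $0$ has the same support since $f$ vanishes there to order $\geq 2$) has codimension $\leq d$, at most $d$ of these linear forms can be linearly independent at $0$. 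Hence $\operatorname{rank} H(f)(0) \leq d$. I should be a little careful here about whether $V(\nabla f)$ and $\Sing(X) = V(f,\nabla f)$ agree near $0$ — they do, because $0 \in \Sing(X)$ forces $f(0)=0$ and $\nabla f(0) = 0$, so $f$ lies in the square of the maximal ideal and hence $f \in (\nabla f) + \mathfrak{m}^3 \subset \dots$; actually the cleanest argument is that on the smooth-point side the statement is local and one reduces to $V(\nabla f)$ directly.

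Next, $iii) \Rightarrow ii)$ is immediate: if $f = z_1^2 + \cdots + z_d^2$ in suitable coordinates then $H(f)$ in those coordinates is $2\,\mathrm{diag}(I_d, 0)$, which has rank $d$; the rank of the Hessian at a critical point is a coordinate-independent invariant (it transforms by congruence $H \mapsto J^T H J$ with $J$ the Jacobian of the change of coordinates, invertible), so $\operatorname{rank} H(f)(0) = d$ in any coordinates. Also note $\Sing(V(z_1^2+\cdots+z_d^2)) = V(z_1,\dots,z_d)$ is smooth, which we will want for closing the loop.

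For $ii) \Rightarrow iii)$, this is essentially the holomorphic Morse lemma with parameters (the splitting lemma). If $H(f)(0)$ has rank exactly $d$, then after a linear change of coordinates the quadratic part of $f$ is $z_1^2 + \cdots + z_d^2$ (here we use that $f(0)=0$ and $\nabla f(0)=0$, which hold because the rank statement is only interesting when $0$ is a critical point — and if $0$ were a smooth point of $X$ then $\Sing(X)$ is empty near $0$, making $i)$ vacuously true, so we may assume $0 \in \Sing(X)$). The parametrized Morse lemma then gives a holomorphic coordinate change in which $f = z_1^2 + \cdots + z_d^2 + g(z_{d+1},\dots,z_n)$ with $g \in \mathfrak{m}^3$. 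It remains to argue $g \equiv 0$: in these coordinates $\Sing(X)$ contains $V(z_1,\dots,z_d, \partial_{d+1} g, \dots, \partial_n g)$, and if $g$ were not identically zero this locus would have codimension strictly greater than $d$ in a neighborhood — contradicting $\operatorname{codim} \Sing(X) = d$. So $g \equiv 0$ and we get $iii)$. Finally $iii) \Rightarrow i)$ is the observation above that $\Sing(V(z_1^2 + \cdots + z_d^2)) = V(z_1,\dots,z_d)$ is smooth; and $i) \Rightarrow ii)$ can be obtained either by a direct tangent-space computation or, more efficiently, by noting $i) \Rightarrow$ (via the dimension count) the locus cut out by $z_1,\dots,z_d$ together with the higher-order terms must be smooth of the right dimension, forcing the Hessian to be nondegenerate on the span of $z_1,\dots,z_d$.

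The main obstacle I anticipate is the holomorphic parametrized Morse/splitting lemma step in $ii) \Rightarrow iii)$ and, relatedly, the bookkeeping needed to rule out the residual term $g$ using only the scheme-theoretic codimension hypothesis rather than smoothness of the reduced locus — one must check that passing to the reduced singular locus does not change the codimension, which is automatic, but the argument should be phrased so that the scheme structure on $\Sing(X)$ (with its possibly non-reduced defining ideal) is what controls the count. Everything else is standard linear algebra (congruence of Hessians) and elementary dimension theory of analytic germs.
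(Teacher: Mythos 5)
Your proof is correct and follows essentially the same route as the paper: the only nontrivial implication is reduced to the holomorphic Morse lemma with parameters (after normalizing coordinates), with the remaining equivalences handled by the Jacobian criterion and the congruence-invariance of the Hessian rank at a critical point. The one minor difference is how the residual term $g(z_{d+1},\dots,z_n)$ is eliminated: the paper notes it equals $f\restr{\Sing(X)}$, which vanishes since $\Sing(X)\subset V(f)$, whereas you rule it out by the codimension count on $\Sing(X)$; both work.
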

\begin{proof}
    $(i \iff ii)$ and $(iii\implies i)$ are trivial. Let us show $(i \text{ and } ii) \implies iii$. After a first change of coordinates we can assume $\Sing(X)=V(x_1,\dots,x_d)$. We apply the Morse Lemma with parameters (recalled below) with $x_1,\dots,x_d$ as parameters and the result follows.
\end{proof}
For a proof of the following Lemma, see for example \cite{zoladek2006monodromy}.
\begin{lemma}[Morse Lemma with parameters]\label{Lemma: Morse with parameters}
Let $f(x;s):(\CC^n\times \CC^k,0)\to (\CC,0)$ be a holomorphic function such that the hessian matrix in the first $n$ coordinates
\[ \mathrm{H}(f)_0= \left( \frac{\dv^2 f}{\dv x_i\dv x_j}(0) \right)_{1\leq i,j \leq n} \]
is non-degenerate. Then there is a local holomorphic change of coordinates $h_s:(\CC^n,0) \to (\CC^n,0)$ such that
\[ f(h_s (y) ; s)= f(0;s)+ \sum_{i=1}^n y_i^2 \,. \]
\end{lemma}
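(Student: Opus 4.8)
The plan is to run the standard proof of the Morse Lemma, but carrying the parameters $s$ along at every step, checking that all the constructions can be made to depend holomorphically on $s$. First I would reduce to the case $f(0;s)\equiv 0$ by replacing $f$ with $f(x;s)-f(0;s)$, which is harmless since $f(0;s)$ is a holomorphic function of $s$ alone and gets absorbed into the target formula. Next I would like to arrange that $f$ has a critical point at the origin \emph{in the $x$-variables} for every $s$ near $0$: since $\nabla_x f(0;0)=0$ and the Hessian $\mathrm{H}(f)_0$ is invertible, the implicit function theorem applied to the map $(x;s)\mapsto \nabla_x f(x;s)$ produces a holomorphic section $s\mapsto \sigma(s)$ with $\sigma(0)=0$ and $\nabla_x f(\sigma(s);s)=0$; after the holomorphic change of coordinates $x\mapsto x-\sigma(s)$ I may assume $\nabla_x f(0;s)=0$ for all $s$. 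Thus $f(x;s)=\sum_{i,j} x_i x_j\, g_{ij}(x;s)$ with $g_{ij}$ holomorphic, obtained by the usual Hadamard-lemma integration $f(x;s)=\int_0^1 (1-t)\,\partial_t^2 f(tx;s)\,dt$; the matrix $(g_{ij}(0;s))$ equals $\tfrac12 \mathrm{H}_x f(0;s)$, which is invertible for $s$ near $0$ by continuity.

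The core step is then the inductive diagonalization of the symmetric matrix-valued holomorphic function $G(x;s)=(g_{ij}(x;s))$. Since $G(0;0)$ is invertible, some entry is nonzero at the origin; after a constant linear change of the $x$-coordinates (independent of $s$) I may assume $g_{11}(0;0)\neq 0$, so $g_{11}(x;s)$ is a unit and $\sqrt{g_{11}(x;s)}$ is a well-defined holomorphic function (choosing the branch near $\sqrt{g_{11}(0;0)}$). Completing the square, set
\[ y_1 = \sqrt{g_{11}(x;s)}\Bigl( x_1 + \sum_{j\geq 2} \frac{g_{1j}(x;s)}{g_{11}(x;s)} x_j \Bigr), \qquad y_j = x_j \ (j\geq 2). \]
This $(x;s)\mapsto (y;s)$ is a holomorphic change of coordinates near the origin (its $x$-Jacobian at $0$ is invertible, again by the implicit/inverse function theorem applied fiberwise in $s$), and in the new coordinates $f = y_1^2 + \sum_{i,j\geq 2} y_i y_j\, \tilde g_{ij}(y;s)$ with $\tilde g_{ij}$ holomorphic and $(\tilde g_{ij}(0;s))$ still invertible. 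Iterating $n$ times — each step strictly reducing the number of remaining variables — yields holomorphic coordinates in which $f = y_1^2 + \cdots + y_n^2$, and composing all the intermediate changes gives the desired $h_s$.

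The step I expect to require the most care is bookkeeping the holomorphic dependence on $s$ of the coordinate changes at each stage: the inverse function theorem and implicit function theorem must be applied in the form "with holomorphic dependence on parameters", and one must keep the domain of validity uniform in $s$ over a small polydisc, which is automatic because all the defining data are holomorphic in $(x;s)$ jointly near the origin. Everything else is the classical argument. Since this lemma is standard, I would not grind through the estimates in detail and would simply cite \cite{zoladek2006monodromy} for the full write-up, indicating the above as the outline.
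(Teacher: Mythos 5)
Your outline is correct; it is the classical proof of the parametric Morse lemma (shift the critical point via the implicit function theorem, apply the Hadamard integral formula to write $f$ as a quadratic form with holomorphic coefficients, then inductively complete the square, tracking holomorphic dependence on $s$ throughout). The paper itself gives no proof of this lemma and simply cites \cite{zoladek2006monodromy}, so there is nothing to compare against; citing the reference with your sketch is exactly what the paper does. One small point worth recording: as stated the lemma does not assume $\nabla_x f(0;s)=0$, and your preliminary shift $x\mapsto x-\sigma(s)$ is genuinely needed -- after it, the germ $h_s$ sends $0$ to $\sigma(s)$ rather than to $0$, so the term $f(0;s)$ in the conclusion must be read as the critical value $f(\sigma(s);s)$; this is an imprecision in the statement rather than in your argument, and it is harmless in the paper's application where the critical locus is held at the origin.
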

We say that a variety $X$ has transversal $\mathrm{A}_1$ singularities if the equivalent conditions of Proposition \ref{Prop: Appendix: Hessian max rank is equiv to smooth sing locus} hold at every singular point of $X$. We now compute the degree of the Gauss map using the results of the previous section:
\begin{theorem}\label{Theorem: Gauss degree theta divisor with smooth singular locus}
    Let $(A,\Theta)\in \AbVar_g$ be a ppav such that $\Theta$ has transversal $\mathrm{A}_1$ singularities. Let 
    \[C\in |\BO_A(\Theta)\restr{B}| \]
    be any smooth member of the linear system. Then the degree of the Gauss map $\GG:\Theta \dashrightarrow \PP^{g-1}$ is
\[ \deg \GG = g! - 2 (-1)^{\dim B}\chi(B)-(-1)^{\dim C} \chi(C) \,,\]
where $\chi$ denotes the usual topological Euler characteristic.
\end{theorem}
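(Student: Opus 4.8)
The plan is to realize $(A,\Theta)$ as the special fiber of a one-parameter deformation $\Theta_S \subset A_S \to S$ in which the general fiber $\Theta_s$ is smooth, apply the First-Order Approximation (Theorem \ref{theorem: Second order Lagrangian Specialization}) to compute $\spp_0 \Lambda_{\Theta_S/S}$, and then use invariance of the degree of Lagrangian cycles in flat families, exactly as in the proof of Corollary \ref{Cor: bound on degree Gauss map by Lag specialization}, with the crucial improvement that here the lower-order terms $\sum_{Y \varsubsetneq Z_{i,\red}} m_Y \Lambda_Y$ must be shown to vanish. Since $\Theta$ has transversal $\mathrm{A}_1$ singularities, $B = \Sing(\Theta)$ is smooth, so its scheme-theoretic irreducible components $Z_i$ are already reduced; by Proposition \ref{Prop: Appendix: Hessian max rank is equiv to smooth sing locus} each point of $B$ looks locally like $V(z_1^2+\cdots+z_d^2)$ with $d = \codim_A B$. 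First I would choose the deformation direction: take $\mathscr{D} \in \HH^0(A,\Sym_2(T_A))$ so that along $B$ the quadratic form it defines on the conormal directions to $B$ is non-degenerate (the heat equation \eqref{Equ: heat equation theta} then forces the family to smooth out $\Theta$ transversally to $B$). Locally near a point of $B$, after the Morse Lemma with parameters, $\theta$ becomes $z_1^2 + \cdots + z_d^2$ and the deformed equation is of the shape $z_1^2 + \cdots + z_{d}^2 + s\cdot(\text{unit}) \cdot z_{d}$ after a further coordinate change — i.e.\ exactly the local model of Example \ref{Ex: Lagrangian Specialization, rank $d$ singular quadric case} (with $n$ replaced by the relevant count). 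That example computes, \emph{locally}, $\spp_0 \Lambda_{\XX/S} = \Lambda_X + 2\Lambda_{B'} + \Lambda_{C'}$ with no further terms, where $B'$ is the singular locus of the special fiber and $C'$ the singular locus of the total space.

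The key computation is then to globalize this local picture. By Remark \ref{Remark: local nature of Lagrangian specialization} the coefficients in the Lagrangian specialization are local, so from Theorem \ref{theorem: Second order Lagrangian Specialization} combined with the local model I get
\[ \spp_0 \Lambda_{\Theta_S/S} = \Lambda_\Theta + 2\,\Lambda_{B} + \Lambda_{\mathcal{Z}\cap \Theta} \,, \]
where $\mathcal{Z} = \Sing(\Theta_S/S)$; the Samuel multiplicity of $B$ in $\Theta$ is $2$ (the multiplicity of a node in the transverse slice), and — this is the point that needs genuine argument — the local model shows there are \emph{no} strictly smaller subvarieties $Y \varsubsetneq B$ appearing, since in Example \ref{Ex: Lagrangian Specialization, rank $d$ singular quadric case} the specialization has exactly three components. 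I would then identify $\mathcal{Z} \cap \Theta$ as a divisor in $|L\restr{B}|$: locally $\mathcal{Z}$ is cut out in $B$ by the extra equation $z_d = 0$ (the derivative of the total-space equation in the deformation direction restricted to $B$), and globally this extra equation is a section of $L\restr{B} = \BO_A(\Theta)\restr{B}$ because the heat operator $\mathscr{D}$ transforms $\theta$ (a section of $L$) into another section of $L$; its zero locus on $B$ is the divisor $C$. For a general choice of $\mathscr{D}$ this $C$ is a general, hence smooth, member of $|L\restr{B}|$, matching the hypothesis of the theorem.

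Finally I would take degrees. Using that $\deg$ is constant in flat families (\cite[Prop. 2.4]{KraemerCodogni}), that $\deg \Lambda_{\Theta_s} = g!$ for smooth $\Theta_s$ (Example \ref{Ex: Deg Gauss map smooth theta}), and that $\deg \Lambda_\Theta = \deg \GG$ (Proposition \ref{Prop: Degree Lagrangian is Gauss degree}), I obtain
\[ g! = \deg \GG + 2\deg \Lambda_B + \deg \Lambda_C \,. \]
It then remains to compute $\deg \Lambda_B$ and $\deg \Lambda_C$ for the smooth varieties $B$ and $C$. For this I would invoke the classical fact that for a smooth projective variety $Z$ of dimension $m$, $\deg \Lambda_Z = (-1)^m \chi(Z)$ — this is the statement that the degree of the conormal variety against the zero section computes the (signed) Euler characteristic, a consequence of the Gauss–Bonnet/Kashiwara index theorem, or can be seen directly since for smooth $Z$ the conormal variety is the conormal bundle $N^\vee_{Z/W}$ and $[\Lambda_Z]\cdot[W] = c_m(T_Z) = \chi(Z)$ up to sign. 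Substituting $\deg \Lambda_B = (-1)^{\dim B}\chi(B)$ and $\deg \Lambda_C = (-1)^{\dim C}\chi(C)$ yields the claimed formula. The main obstacle I anticipate is the globalization step: making precise, from the pointwise local model, that the only components of $\spp_0 \Lambda_{\Theta_S/S}$ are $\Lambda_\Theta$, $\Lambda_B$, $\Lambda_C$ — in particular ruling out lower-dimensional strata — and correctly identifying $\mathcal{Z}\cap\Theta$ with a genuine member of $|L\restr{B}|$ rather than merely a divisor in the intrinsic linear system of $B$; getting the deformation $\mathscr{D}$ to be simultaneously "transverse enough" along all of $B$ and "general enough" that $C$ is smooth is where the care is needed.
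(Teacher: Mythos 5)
Your proposal is correct and follows essentially the same route as the paper: deform $(A,\Theta)$ in a heat-equation direction $\mathscr{D}$ chosen so that $V(\mathscr{D}\theta\restr{B})$ is a smooth member of the base-point-free linear system spanned by the second derivatives of $\theta$ on $B$, use the Morse lemma with parameters to reduce to the local model of Example \ref{Ex: Lagrangian Specialization, rank $d$ singular quadric case} and conclude $\spp_0\Lambda_{\Theta_S/S}=\Lambda_\Theta+2\Lambda_B+\Lambda_C$ with no lower strata, then take degrees using flat-family invariance and $\deg\Lambda_Z=(-1)^{\dim Z}\chi(Z)$. The only slips are notational: the third component is the conormal variety of $C=\Sing(\Theta_S)\cap\Theta=V(\dv_s\theta\restr{B})$ (the singular locus of the \emph{total space}, not of $\Sing(\Theta_S/S)\cap\Theta$, which is all of $B$), and the linear term in the normal form sits in a coordinate $\tilde z_{d+1}$ along $B$, not in one of the quadric coordinates $z_1,\dots,z_d$.
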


\begin{proof}
Let $d=\mathrm{dim}( B)$. Let $\theta\in \HH^0(A,\BO_A(\Theta))$ be a non-zero section. Let $\dv z_1,\dots,\dv z_g$ be a basis of $\HH^0(A,T_A)$. Consider the linear system on $B$
\[ T\coloneqq \left| \frac{\dv^2 \theta}{\dv z_i \dv z_j}\restr{B} \right|_{1\leq i,j\leq g} \subset |\BO_A(\Theta)\restr{B}| \,. \]
Notice that by Proposition \ref{Prop: Appendix: Hessian max rank is equiv to smooth sing locus}, the Hessian of $\theta$ is of rank $d$, in particular $T$ is base-point free and by Bertini's theorem, a general divisor $C\in T$ is smooth. We have
\[ C=\divv \left( \sum_{i,j} \lambda_{ij} \frac{\dv^2 \theta}{\dv z_i \dv z_j}\restr{B}\right) \]
for some $\lambda_{ij}$. By the previous section, there is a deformation $q:(A_S,\Theta_S)\to S$ in the $\mathscr{D}=\sum_{i,j} \lambda_{ij} \dv_i \dv_j \theta$ direction. By \ref{Equ: heat equation theta}, there are locally coordinates $z_1,\dots,z_g,s$ on $A_S$ such that $q$ is the projection onto the last coordinate and
\[ C= V\left(\frac{\dv \theta}{\dv s}\right) \,.\]
Let $\Lambda_{\Theta_S/S}$ be the relative conormal variety. By the Lemma below and Example \ref{Ex: Lagrangian Specialization, rank $d$ singular quadric case} we have
\[ \spp_0 \Lambda_{\Theta_S/S} = \Lambda_{\Theta_0}+2\Lambda_B+\Lambda_C \,. \]
\begin{lemma}\label{Lemma: Normal Form for Morse functions with a 1-dimensional parameter and smooth critical locus}
Let $f(z;s):(\CC^n \times \CC,0) \to (\CC,0)$ be a holomorphic function such that the singular locus and the critical locus of $f\restr{s=0}$ 
\[ B \coloneqq V(f,\dv_1 f,\dots,\dv_n f,s)\,,\quad   C \coloneqq V(\dv_s f\restr{B})\subset B\,, \]
are smooth. Then there is a local holomorphic change of coordinates $z=h_s(\tilde{z})$ such that 
\[ f(h_s(\tilde{z});s)=  \tilde{z}_1^2+\cdots +\tilde{z}_d^2 + \tilde{z}_{d+1} s \,. \]
\end{lemma}
We prove the Lemma below. By generality of $C$ and smoothness of the Theta divisor of a general ppav (it is also apparent from the normal form above), $\Theta_s$ is smooth for $s\neq 0$. Thus
\begin{align*}
    g!&=\deg \Lambda_{\Theta_s}  &\text{(Ex. \ref{Ex: Deg Gauss map smooth theta})} \\
    &= \deg (\spp_s \Lambda_{\Theta_S/S} ) & \\
    &= \deg (\spp_0 \Lambda_{\Theta_S/S})  &\text{(\cite[Prop. 2.4]{KraemerCodogni})}\\
    &= \deg\left( \Lambda_{\Theta_0}+2\Lambda_B+\Lambda_C \right)\\
    &= \deg (\GG) + 2(-1)^{\dim B} \chi(B) + (-1)^{\dim C}\chi(C)  &\text{(Prop. \ref{Prop: Degree Lagrangian is Gauss degree})}\,.
\end{align*} 
\end{proof}

\begin{proof}[Proof of Lemma \ref{Lemma: Normal Form for Morse functions with a 1-dimensional parameter and smooth critical locus}]
After a change of coordinates in $\CC^n$ we can assume $B=\{z_1=\cdots=z_d=0\}$. We have $f\restr{B}=0$ thus
\[ \frac{\dv^2 f}{\dv z_i \dv z_j}\restr{0}=0 \quad \text{for $1\leq i \leq n$ and $d+1\leq j \leq n$.} \]
Thus the Hessian of $F$ in the first $n$ coordinates is
\[ H(F)_0=
\begin{pmatrix}
H(F\restr{\CC^d})_0 & 0 \\
0 & 0
\end{pmatrix}\,.\] 
    By Proposition \ref{Prop: Appendix: Hessian max rank is equiv to smooth sing locus}, $H(F)$ and thus $H(F\restr{\CC^d})$ is of rank $d$ at $0$. By the Morse Lemma with parameters $(z_{d+1},\dots,z_n,s)$, there is a holomorphic change of coordinates $(z_1,\dots,z_d)=h_{(z',s)}(\tilde{z}_1,\dots,\tilde{z}_d)$ where $z'=(z_{d+1},\dots,z_n)$, such that
\[ f(h_{(z',s)}(\tilde{z}),z',s)=\sum_{i=1}^{d} \tilde{z}_i^2 + f(0,z',s) \,. \]
We have
\[ f(0,z',0)=0 \,, \]
and $\frac{\dv f}{\dv s}\restr{B}$ has a simple $0$ in zero, thus after a change of coordinates (in $z'$), we can assume\[ \frac{\dv f}{\dv s}\restr{B}= z_{d+1} \,. \]
Thus 
\[ f(0,z',s)=s(z_{d+1}+s g(z',s) ) \,, \]
for some holomorphic $g$. Making the coordinate change
\[ \tilde{z}_{d+1}=z_{d+1}+s g(z',s) \,, \]
the lemma follows.
\end{proof}

\section{The Family \texorpdfstring{$\Art$}{Agt}}\label{Sec: Gauss degree on Ard}
We apply Theorem \ref{Theorem: Gauss degree theta divisor with smooth singular locus} to the families $\Art$ studied by Debarre in \cite{Debarre1988}. First we recall the definition and known results about $\Art$. Then we compute the Gauss degree for a general member of these families. Finally we analyse the degree numerically and show that it separates the corresponding components of the Andreotti-Mayer locus.
\subsection{Definition of the Family}
Let $A$ be an abelian variety and $L$ an ample line bundle on $A$. Recall that the type $\delta=(a_1,\dots,a_k)$ of $L$ is defined by
\[ \Ker(\Phi_L)\simeq \bigoplus_{i=1}^k (\ZZ/a_i\ZZ)^2\,, \quad \text{and} \quad  a_i|a_{i+1} \quad \text{for $1\leq i <k$}\,,\]
where $\Phi_L: A \to \hat{A}$ is the polarization induced by $L$. We have the following \cite{Debarre1988}, \cite[Th. 5.3.5]{Birkenhake2004}:
\begin{proposition}[Complementary Abelian Varieties]\label{PropComplAbVar}
Let $(A,\Theta)\in \AbVar_{g}$ and $\delta$ be a polarization type. Suppose there is an abelian subvariety $X \subset A$ of dimension $t$ and the induced polarization $L_X=L\restr{X}$ is of type $\delta$. Then there is a unique abelian subvariety $Y \subset A$ (of dimension $g-t$) such that: 
\begin{enumerate}[a)]
\item The morphism $\pi : X\times Y \overset{i_X+i_Y}{\longrightarrow} A $ is an isogeny.
\item We have
\[ \pi^\star L = L_X \boxtimes L_Y \,, \quad \text{where } \quad L_Y=L\restr{Y} \,. \]
\end{enumerate}
Moreover $L_Y$ is also of type $\delta$. We define $\Art\subset \AbVar_g$ to be the set of ppav's verifying the above conditions.
\end{proposition}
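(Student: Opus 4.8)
The plan is to reconstruct the complement $Y$ from $X$ alone, using the norm-endomorphism and symmetric-idempotent formalism for abelian subvarieties (this is essentially \cite[Ch.~5.3]{Birkenhake2004}, cf.\ \cite{Debarre1988}); recall that $L=\BO_A(\Theta)$ is the principal polarization, so $\Phi_L\colon A\to\hat A$ is an isomorphism.

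First I would build the norm-endomorphism of $X$. Write $i_X\colon X\hookrightarrow A$ for the inclusion and $\hat i_X\colon\hat A\to\hat X$ for its dual. Since $L_X$ has type $\delta=(a_1,\dots,a_k)$, the polarization $\Phi_{L_X}\colon X\to\hat X$ is an isogeny with $\Ker(\Phi_{L_X})\cong\bigoplus_i(\ZZ/a_i\ZZ)^2$ of exponent $e\coloneqq a_k$, so there is a unique isogeny $\psi_X\colon\hat X\to X$ with $\psi_X\circ\Phi_{L_X}=e\cdot\mathrm{id}_X$ and $\Phi_{L_X}\circ\psi_X=e\cdot\mathrm{id}_{\hat X}$. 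Put
\[ N_X\coloneqq i_X\circ\psi_X\circ\hat i_X\circ\Phi_L\colon A\to A. \]
Using the functoriality identity $\hat i_X\circ\Phi_L\circ i_X=\Phi_{L_X}$ (which holds because $L|_X=L_X$) one checks $N_X\circ i_X=e\cdot i_X$, whence $N_X^2=e\,N_X$, and — using that $\Phi_L$ and $\psi_X$ are symmetric — that $N_X$ is symmetric for $\Phi_L$. Thus $\varepsilon_X\coloneqq\tfrac1e N_X\in\mathrm{End}_\QQ(A)$ is a symmetric idempotent with image $X$, and $\varepsilon_Y\coloneqq\mathrm{id}-\varepsilon_X$ is the complementary symmetric idempotent. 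I would then \emph{define} $Y\coloneqq\mathrm{im}(e\cdot\mathrm{id}_A-N_X)=(\Ker N_X)^{0}$, the abelian subvariety realizing $\varepsilon_Y$; a trace computation gives $\dim Y=g-t$. Setting $N_Y\coloneqq e\cdot\mathrm{id}_A-N_X$ (which satisfies $N_Y^2=e\,N_Y$ and $\mathrm{im}\,N_Y=Y$), one gets $N_X|_Y=0$ while $N_X|_X=e\cdot\mathrm{id}_X$; hence $X\cap Y\subseteq A[e]$ is finite, so $\dim(X+Y)=g$ and $\pi=i_X+i_Y\colon X\times Y\to A$ is an isogeny with $\Ker\pi\cong X\cap Y$ — this is (a). Uniqueness follows the same way: given any $Y'$ as in the statement, reading off the $(X,Y')$-block of $\hat\pi'\circ\Phi_L\circ\pi'$ forces $\hat i_X\circ\Phi_L\circ i_{Y'}=0$, hence $N_X|_{Y'}=0$, hence $Y'\subseteq(\Ker N_X)^{0}=Y$, and then $Y'=Y$ by dimension.

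For (b) the decisive point is the $L$-orthogonality of $X$ and $Y$: since $N_X|_Y=0$ and $i_X\circ\psi_X$ has finite kernel, the homomorphism $\hat i_X\circ\Phi_L\circ i_Y\colon Y\to\hat X$ has finite image, hence vanishes, and symmetrically $\hat i_Y\circ\Phi_L\circ i_X=0$. Expanding $\Phi_{\pi^{*}L}=\hat\pi\circ\Phi_L\circ\pi$ along $\pi=i_X\circ p_X+i_Y\circ p_Y$, the mixed blocks die and the diagonal ones give $\Phi_{L_X}$ and $\Phi_{L_Y}$, so $\Phi_{\pi^{*}L}=\Phi_{p_X^{*}L_X\otimes p_Y^{*}L_Y}$; hence $M\coloneqq\pi^{*}L\otimes(p_X^{*}L_X\otimes p_Y^{*}L_Y)^{-1}$ lies in $\mathrm{Pic}^0(X\times Y)=\hat X\times\hat Y$. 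Restricting $M$ to the slices $X\times\{0\}$ and $\{0\}\times Y$, where $\pi$ restricts to $i_X$ resp.\ $i_Y$, shows both restrictions are trivial, so $M$ is trivial and $\pi^{*}L\cong L_X\boxtimes L_Y$, which is (b).

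Finally, to see that $L_Y$ again has type $\delta$: from (b), $\#\Ker(\Phi_{\pi^{*}L})=\#\Ker(\Phi_{L_X})\cdot\#\Ker(\Phi_{L_Y})$, while $\Phi_L$ being an isomorphism gives $\#\Ker(\Phi_{\pi^{*}L})=(\deg\pi)^2=\#(X\cap Y)^2$. I expect this to be the main obstacle: pinning down the \emph{type} of $L_Y$, and not merely the order of $\Ker(\Phi_{L_Y})$, requires the identification $X\cap Y\cong\Ker(\Phi_{L_X})$ as abelian groups — the step that genuinely uses more than formal idempotent calculus, and which is precisely \cite[Ch.~5.3]{Birkenhake2004}. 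Granting it, $\#\Ker(\Phi_{L_Y})=\#\Ker(\Phi_{L_X})=(a_1\cdots a_k)^2$, and since the construction is symmetric in $X$ and $Y$ (the complement of $Y$ is $X$ again), the same identification with the roles reversed gives $X\cap Y\cong\Ker(\Phi_{L_Y})$; thus $\Ker(\Phi_{L_X})\cong\Ker(\Phi_{L_Y})$, and since the type of a polarization is determined by this finite group, $L_Y$ has type $\delta$. Everything else is a routine, if somewhat lengthy, manipulation of polarizations and their kernels.
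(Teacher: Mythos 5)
The paper offers no proof of this proposition---it is quoted from \cite{Debarre1988} and \cite[Th.~5.3.5]{Birkenhake2004}---and your argument is a correct reconstruction of exactly the norm-endomorphism/symmetric-idempotent proof given in those references. The one step you grant by citation, the antisymplectic identification $\Ker(\pi)\cong\Ker(\Phi_{L_X})$ needed to pin down the type of $L_Y$, is indeed the content of \cite[Sec.~5.3]{Birkenhake2004}, so there is no gap relative to what the paper itself assumes.
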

Reciprocally, if $(X,L_X)$, $(Y,L_Y)$ are two abelian varieties of the same type $\delta$, of dimension $t$ and $g-t$ respectively, and $\psi:\Ker (\Phi_{L_X}) \to \Ker (\Phi_{L_Y})$ is an antisymplectic isomorphism, then
\[ A \coloneqq X\times Y/{K} \in \Art \quad \text{where} \quad K\coloneqq \{(x,\psi x)\,|\,x\in K(L_X)\}  \,. \]
Thus $\Art$ is irreducible loci of codimension $t(g-t)$ in $\mathcal{A}_g$ \cite[Sec. 9.3]{Debarre1988}. Clearly $\AbVar^\delta_{t,g-t}=\AbVar^\delta_{g-t,t}$, so from now on we will assume $t\leq g/2$. The loci $\Art$ are all distinct. Let $B_X$ (resp. $B_Y$) be the base locus of $L_X$ (resp. $L_Y$). Recall that by the Riemann-Roch theorem,
\[ \hh^0(X,L_X)=\hh^0(Y,L_Y)=\deg \delta \eqqcolon d \,. \]
Thus for $d\leq t$, the base loci $B_X$ and $B_Y$ are non-empty of codimension at most $d$ in $X$ and $Y$ respectively. Let $s^X_1,\dots,s^X_d$ and $s^Y_1,\dots,s^Y_d$ denote a basis of $\HH^0(X,L_X)$ and $\HH^0(Y,L_Y)$ respectively. Let $s$ be a generator of $\HH^0(A,L)$. Then
\[ \pi^\ast s = \sum_{i,j} \lambda_{ij} s^X_i \boxtimes s^Y_j \]
for some $\lambda_{ij}$. Derivating we have
\begin{align*}
    \dd (\pi^\ast s) = \sum_{i,j} \lambda_{ij}( (\dd s^X_i) \boxtimes s^Y_j +  s^X_i \boxtimes (\dd s^Y_j) )\,, 
\end{align*}
which vanishes on $B_X\times B_Y$. Thus
\begin{align}
\pi (B_X\times B_Y) &\subset \Sing(\Theta) \,, \label{Equ: Sing Locus Art and base locus} \\
 \text{and} \quad  \Art &\subset \mathcal{N}^g_{g-2d} \,.
\end{align}
The main result of Debarre concerning the families $\Art$ is the following:
\begin{theorem}[{\cite[Thm. 10.4 and 12.1]{Debarre1988}}]\label{Thm: Debarre: Cases where Ard verifies Star} Let $\delta \in \{(2),(3),(2,2) \}$, and $d=\deg \delta$.
\begin{enumerate}[i)]
    \item If $t\geq d$, then $\mathcal{A}^{\delta}_{t,g-t}$ is an irreducible component of $\mathcal{N}^{(g)}_{g-2d}$. Moreover for a general $(A,\Theta)\in \Art$, there is equality in \ref{Equ: Sing Locus Art and base locus} and $\Theta$ has transversal $\mathrm{A}_1$ singularities.
       \item If $t= \lfloor d/2 \rfloor$, a general $(A,\Theta)\in \Art$ has smooth theta divisor. In this case, $\deg \GG \left(\Art \right)=g!$.
\end{enumerate}
\end{theorem}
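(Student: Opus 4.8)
A general $(A,\Theta)\in\Art$ is obtained by gluing general polarized abelian varieties $(X,L_X)$ and $(Y,L_Y)$ of type $\delta$ and of dimensions $t$ and $g-t$ along an antisymplectic isomorphism (Proposition \ref{PropComplAbVar}), and the gluing isogeny $\pi\colon X\times Y\to A$ is \'etale with $\pi^\ast s=\sum_{i,j}\lambda_{ij}\,s^X_i\boxtimes s^Y_j$. The plan is to transport every question about $\Sing(\Theta)$, the Hessian of $s$, and smoothness of $\Theta$ to this explicit section $\pi^\ast s$ on $X\times Y$. The first step is to pin down the base loci $B_X\subset X$ and $B_Y\subset Y$: for the three small types $\delta\in\{(2),(3),(2,2)\}$ the linear system $|L_X|$ has projective dimension $d-1$, and one checks (iterated Bertini when $t\ge d$, the classical description of $|L_X|$ on an elliptic curve or on a general $(2,2)$ abelian surface when $t<d$) that $B_X$ has the expected codimension $d$ in $X$ and is smooth where it is nonempty. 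Thus $\dim B_X=t-d$ and $\pi(B_X\times B_Y)$ is smooth of dimension $g-2d$ when $t\ge d$, while $B_X=\varnothing$ when $t<d$, in particular when $t=\lfloor d/2\rfloor$. I would then treat the two ranges of $t$ separately.

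Assume $t\ge d$, so that $\pi(B_X\times B_Y)\subseteq\Sing(\Theta)$ by \ref{Equ: Sing Locus Art and base locus} is smooth of dimension $g-2d$. At a point $(x_0,y_0)\in B_X\times B_Y$ all the values $s^X_i(x_0)$ and $s^Y_j(y_0)$ vanish, hence the second derivatives of $\pi^\ast s$ purely in the $X$-directions, and those purely in the $Y$-directions, vanish there, so the Hessian acquires the block shape $\left(\begin{smallmatrix}0&M\\ M^{\mathsf T}&0\end{smallmatrix}\right)$ with $M_{ab}=\sum_{i,j}\lambda_{ij}\,\partial_{x_a}s^X_i(x_0)\,\partial_{y_b}s^Y_j(y_0)$; its rank is therefore twice the rank of $M$. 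Smoothness of $B_X$ of codimension $d$ forces $\dd s^X_1,\dots,\dd s^X_d$ to be independent in the conormal directions at $x_0$, and similarly for $Y$, so $M$ is a product of two rank-$d$ matrices twisted by $(\lambda_{ij})$; one verifies that for the gluing parameters of a general member this composite has rank exactly $d$, so the Hessian attains the maximal possible rank, equal to $2d=\codim\Sing(\Theta)$. By Proposition \ref{Prop: Appendix: Hessian max rank is equiv to smooth sing locus} every point of $\pi(B_X\times B_Y)$ is then a transversal $\mathrm{A}_1$ point and $\Sing(\Theta)$ is smooth there; a dimension count on the subset of $X\times Y$ cut out by $\pi^\ast s$ and $\dd(\pi^\ast s)$ shows that a general member has no singular points off $\pi(B_X\times B_Y)$, which gives the equality in \ref{Equ: Sing Locus Art and base locus} together with the transversal $\mathrm{A}_1$ claim. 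To conclude that $\Art$ is an irreducible \emph{component} of $\mathcal{N}^{(g)}_{g-2d}$ and not merely contained in it, I would compute the Zariski tangent space to $\mathcal{N}^{(g)}_{g-2d}$ at $(A,\Theta)$: identifying first-order deformations of a ppav with $\HH^0(A,\Sym_2 T_A)$ and using the heat equation \ref{Equ: heat equation theta} of Section \ref{Sec: Deformation of ppav's}, the requirement that the deformation preserve a $(g-2d)$-dimensional singular locus cuts out a subspace of dimension at most $\dim\mathcal{A}_g-t(g-t)=\dim\Art$; since it contains the tangent space to $\Art$, equality holds and $\Art$ is a component.

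Assume now $t=\lfloor d/2\rfloor<d$, so that $B_X=\varnothing$. For a general point $(x,y)$ of $\pi^{-1}\Theta$, vanishing of the $X$-derivatives of $\pi^\ast s$ says that $x$ is a singular point of the divisor $V(\sum_i e_i s^X_i)$, where $e_i=\sum_j\lambda_{ij}s^Y_j(y)$, which is a general member of $|L_X|$ and hence smooth by Bertini; the strata lying over $B_Y$ are handled similarly, using $B_X=\varnothing$ to ensure the value vectors $(s^X_i(x))_i$ do not vanish. Hence $\Theta$ is smooth, $\GG_\Theta$ is a morphism defined by the complete linear system $|\BO_A(\Theta)\restr{\Theta}|$, and Riemann--Roch gives $\deg\GG_\Theta=[\Theta]^{\cdot g}=g!$, exactly as in Example \ref{Ex: Deg Gauss map smooth theta}.

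The main obstacle is the component statement in (i): showing that $\mathcal{N}^{(g)}_{g-2d}$ does not acquire extra tangent directions at a general point of $\Art$ requires the delicate infinitesimal analysis of the Andreotti--Mayer loci that is the technical core of \cite{Debarre1988}, and one must in addition check that the gluing parameters $(\lambda_{ij})$, which are constrained by $K$-invariance of $\pi^\ast s$, are generic enough for the Hessian-rank computation and for the excess-vanishing dimension counts to go through. By contrast, the behaviour of the base loci and the first-order smoothness arguments are routine once the three special values of $\delta$ are exploited.
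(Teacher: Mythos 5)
This statement is not proved in the paper at all: it is imported verbatim from Debarre (Thm.\ 10.4 and 12.1 of \cite{Debarre1988}), so there is no in-paper argument to compare yours against, and any ``proof'' here is really a reconstruction of Debarre's. With that caveat, the core of your sketch for the transversal $\mathrm{A}_1$ claim is sound and is essentially the right local computation: at $(x_0,y_0)\in B_X\times B_Y$ the Hessian of $\pi^\ast s$ does take the block form $\bigl(\begin{smallmatrix}0&M\\ M^{\mathsf T}&0\end{smallmatrix}\bigr)$, and the rank-$d$ claim for $M$ is clean because by \cite[Prop.\ 9.1]{Debarre1988} (used later in Section \ref{Sec: Gauss degree on Ard}) one may take $\Lambda=(\lambda_{ij})$ to be the identity, so $\mathrm{rank}\,M=d$ follows from smoothness of $B_X$ and $B_Y$ in codimension $d$; combined with Proposition \ref{Prop: Appendix: Hessian max rank is equiv to smooth sing locus} this gives transversal $\mathrm{A}_1$ along $\pi(B_X\times B_Y)$.

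There are, however, three places where your plan papers over the actual content. First, smoothness of $B_X$ of the expected codimension $d$ for a general $(X,L_X)$ of type $\delta$ is \emph{not} ``iterated Bertini'': Bertini controls general members of $|L_X|$ away from the base locus, and says nothing about the base locus itself, which is a fixed (non-movable) intersection; establishing $\dim B_X=t-d$ and smoothness is one of the genuine inputs of Debarre's paper, special to $\delta\in\{(2),(3),(2,2)\}$. Second, in part (ii) your step ``$V(\sum_i e_i s^X_i)$ is a general member of $|L_X|$ and hence smooth by Bertini'' fails: as $y$ ranges over $Y$ the coefficient vector $(e_i(y))$ sweeps out essentially all of $\PP^{d-1}$, so $D_y$ runs through \emph{every} member of the pencil/net, including the singular ones (for $\delta=(2)$, $t=1$, the pencil of degree-$2$ divisors on an elliptic curve has four non-reduced members). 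Smoothness of $\Theta$ requires the joint incidence count using simultaneously the vanishing of the $X$- and $Y$-derivatives, which is precisely Debarre's Thm.\ 12.1 and not a one-line Bertini argument. Third, you correctly flag that the component statement is the hard part, but the bound $\dim T_{(A,\Theta)}\mathcal{N}^{(g)}_{g-2d}\le\dim\Art$ is asserted rather than argued; this infinitesimal Andreotti--Mayer computation is the technical core of the cited result and cannot be recovered from anything in the present paper. Since the paper deliberately treats this theorem as a black box, citing it is the correct move; if you do want to reprove it, the two Bertini steps above are the ones that would actually fail as written.
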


We end this section with a result on the dimension of the fibers of the Gauss map. This is a slight slight improvement on \cite[Thm. 1.1]{AuffarthCodogni2019} (the bound on the dimension is stronger):
\begin{proposition}
Let $(A,\Theta)\in \Art$, $d\coloneqq \deg \delta$, and suppose $2\leq d \leq t \leq g/2 $. Suppose there is a divisor $D\in |L_X|$ such that $D$ is smooth at some point $x\in B_X$. Then some fibers of the Gauss map $\GG:\Theta\dashrightarrow \PP^{g-1}$ are of dimension at least $g-t-d+1$.
\end{proposition}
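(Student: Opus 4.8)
The plan is to exploit the product structure coming from Proposition~\ref{PropComplAbVar}: up to the isogeny $\pi:X\times Y\to A$, the theta divisor $\Theta$ pulls back to the divisor cut out by $\sum_{i,j}\lambda_{ij}\,s^X_i\boxtimes s^Y_j$ on $X\times Y$, and $\pi(B_X\times B_Y)\subset\Sing(\Theta)$ by \eqref{Equ: Sing Locus Art and base locus}. Since the Gauss map is defined by the linear system of partial derivatives of the theta function, I want to show that along a suitable locus lying over a point of $B_X\times B_Y$ all these partials are proportional to a \emph{fixed} covector, so that the Gauss map contracts that locus. Concretely, fix $x\in B_X$ at which some $D\in|L_X|$ is smooth; after adjusting the basis $s^X_1,\dots,s^X_d$ we may assume $s^X_1$ vanishes to order exactly one at $x$ and $s^X_2,\dots,s^X_d$ vanish to order $\ge 2$ there (a genericity/linear-algebra step using that $B_X$ has codimension $\le d$ and $D$ is smooth at $x$). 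Then for any $y\in Y$, at the point $(x,y)$ the $Y$-derivatives of $\pi^*s$ all vanish (because every $s^X_i(x)=0$), and the $X$-derivatives all reduce to $\big(\sum_j\lambda_{1j}s^Y_j(y)\big)\cdot d s^X_1|_x$ — a single covector in the $X$-direction, independent of $y$, times a scalar depending on $y$.

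Next I would identify the locus that gets contracted. For $y$ ranging over $Y$ with $\sum_j\lambda_{1j}s^Y_j(y)\neq 0$, the point $(x,y)$ maps under $\pi$ into $\Theta$ and, by the previous paragraph, the Gauss map (where defined, i.e.\ after resolving the base locus as in Proposition~\ref{Prop: Lagrangian is blowup}) sends it to the fixed point $[d s^X_1|_x]\in\PP T_0^\vee A$ determined by $x$ alone. Thus the image $\pi(\{x\}\times Y)$, an abelian subvariety of dimension $g-t$ inside $\Theta$, is collapsed by $\GG$ — but one has to be a little careful because $(x,y)$ for generic $y$ may actually be a \emph{smooth} point of $\Theta$ where $\GG$ is honestly defined, and there the tangent space is spanned by the full gradient, which we have just computed to be the line $\CC\cdot d s^X_1|_x$ plus the $Y$-directions that are forced to lie in the tangent space for dimension reasons; so the tangent hyperplane, translated to the origin, is constant along $\{x\}\times Y$. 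To get the claimed bound $g-t-d+1$ rather than $g-t$, I restrict further: I should take $x$ to vary in a $(d-1)$-dimensional family inside $B_X$ — using that $B_X$ has dimension $\ge t-d$, hmm, this needs $\dim B_X\ge d-1$, i.e.\ $t\ge 2d-1$... Let me instead keep $x$ fixed and intersect $\{x\}\times Y$ with the $(d-1)$ further conditions coming from requiring the point to lie in $\Theta$ together with the vanishing needed to stay in a single fiber; the cleanest formulation is: the fiber of $\GG$ through a general point of $\pi(\{x\}\times Y\cap \pi^{-1}\Theta)$ contains $\pi(\{x\}\times Y)\cap\Theta$, which has dimension $(g-t)-1$; removing a further $d-2$ conditions to pin down the image point among the $\le\binom{?}{}$-dimensional boundary contributions yields $\ge (g-t)-1-(d-2)=g-t-d+1$.

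The main obstacle I anticipate is precisely the bookkeeping in that last step: showing that the full fiber of $\GG$ through such a point has dimension \emph{at least} $g-t-d+1$, as opposed to merely exhibiting a $(g-t)$-dimensional subvariety on which the \emph{leading} term of the linearization is constant. One must check that the higher-order terms (the contributions of $s^X_2,\dots,s^X_d$, which vanish to order $\ge 2$ at $x$, and of the Taylor expansion in the $X$-directions away from $x$) do not separate points of $\pi(\{x\}\times Y)$ after blowing up the base locus; equivalently, that the proper transform of $\pi(\{x\}\times Y)$ in $\Bl_{\Sing\Theta}\Theta\cong\PP\Lambda_\Theta$ still maps to a subvariety of $\PP^{g-1}$ of dimension $\le d-1$. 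This is where the smoothness of $D\in|L_X|$ at $x$ is used: it guarantees $d s^X_1|_x\neq 0$ so that the contracted direction is genuinely well-defined and the relevant $X$-derivative does not itself vanish identically along $\{x\}\times Y$. I would carry this out by writing local coordinates on $X\times Y$ adapted to $B_X\times B_Y$, expanding $\pi^*s$ as in the proof of Lemma~\ref{Lemma: Normal Form for Morse functions with a 1-dimensional parameter and smooth critical locus}, and reading off the indeterminacy resolution directly.
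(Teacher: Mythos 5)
There is a genuine gap, and it sits exactly at your normalization step. You claim that after adjusting the basis one may assume $s^X_1$ vanishes to order exactly one at $x$ while $s^X_2,\dots,s^X_d$ vanish to order $\geq 2$ there. This would require the linear map $\HH^0(X,L_X)\to T_x^\vee X$, $\sigma\mapsto \dd_x\sigma$ (well defined since every section vanishes at $x\in B_X$), to have rank one; the hypothesis that some $D\in|L_X|$ is smooth at $x$ only gives rank $\geq 1$. In the generic situation of interest --- $B_X=V(s^X_1,\dots,s^X_d)$ smooth of codimension $d$ at $x$, which is what happens for a general member of $\Art$ by Theorem \ref{Thm: Debarre: Cases where Ard verifies Star} --- the $d$ covectors $\dd_x s^X_i$ are linearly \emph{independent}, so no such basis exists. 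Consequently the $X$-part of $\dd_{(x,y)}\pi^\ast s$ is the genuinely $y$-dependent combination $\sum_{i,j}\lambda_{ij}s^Y_j(y)\,\dd_x s^X_i$, its direction in $\PP T_0^\vee(X\times Y)$ varies with $y$, and your assertion that $\GG$ contracts all of $\pi(\{x\}\times Y)$ is false (it would produce fibers of dimension $g-t$, which is why your attempt to then ``restrict further'' becomes circular). The final bookkeeping is also off: since every $s^X_i(x)=0$, the set $\{x\}\times Y$ lies entirely in $\pi^{-1}\Theta$, so intersecting with $\Theta$ costs nothing, and the number of conditions you must impose on $y$ to freeze the conormal direction is $d-1$, not $d-2$.

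The correct repair --- and the paper's actual argument --- is to put those $d-1$ conditions on $y$ rather than on the basis of $\HH^0(X,L_X)$. Using Debarre's diagonalization one writes $\pi^\ast s=\sum_{i=1}^d s^X_i\otimes s^Y_i$, picks $i_0$ with $\divv(s^X_{i_0})$ smooth at $x$, and sets $F=V(s^Y_i,\ i\neq i_0)\setminus V(s^Y_{i_0})\subset Y$. For $y\in F$ one gets $\dd_{(x,y)}\pi^\ast s=s^Y_{i_0}(y)\,\dd_x s^X_{i_0}\neq 0$: a fixed conormal direction at an honest \emph{smooth} point of $\pi^\ast\Theta$, so the Gauss map is defined there and your worries about resolving the indeterminacy and about higher-order terms are moot. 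Since $F$ is cut out of the $(g-t)$-dimensional $Y$ by $d-1$ divisors, $\dim F\geq g-t-d+1$, which is the claim. If you prefer to avoid the diagonal form, the same works with the locus $\{y\,:\,\sum_j\lambda_{ij}s^Y_j(y)=0\text{ for all }i\neq i_0\}$, which imposes the same $d-1$ conditions; either way the conditions land on $Y$, not on the order of vanishing of the $s^X_i$ at $x$.
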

\begin{proof}
Let $\pi:X\times Y\to A$ denote the isogeny of \ref{PropComplAbVar}. Let $\tilde{\Theta}\coloneqq \pi^\ast \Theta \subset X\times Y$. By \cite[Prop. 9.1]{Debarre1988}, there is a basis $s_1^X,\dots,s_d^X$ (resp. $s_1^Y,\dots,s_d^Y$) of $\HH^0(X,L_X)$ (resp. $\HH^0(Y,L_Y)$), such that
\[ \tilde{\Theta} = \divv s \,, \quad \text{where} \quad s=\sum_{i=1}^d s^X_i \otimes s^Y_i \,. \]
We can assume $\divv(s^X_d)$ is smooth for some $x\in B_X$. Let $F=V(s^Y_1,\dots,s^Y_{d-1})\setminus V(s^Y_d)\subset Y$. For $y\in F$, we have
\begin{align*}  \dd_{x,y} s &= \sum_i \dd_x s^X_i \otimes s^Y_i(y)+ s^X_i(x)\otimes  \dd_y s^Y_i = \dd_x s^X_d \otimes s^Y_d(y) \neq 0\,.
\end{align*}
Moreover for $y\in F$ this a constant conormal vector $v\in \PP T_0^\vee (X\times Y)$. Thus the preimage of $v$ by the Gauss map contains $\{x\}\times F$ which is of dimension at least
\[ \dim Y - (d-1) = g-t-d+1 \,. \]
\end{proof}

\subsection{Gauss Degree on \texorpdfstring{$\Art$}{Agt}}\label{Section: Gauss degree on Art}
Knowing \ref{Theorem: Gauss degree theta divisor with smooth singular locus} and \ref{Thm: Debarre: Cases where Ard verifies Star}, the computation of the Gauss Degree on a general $(A,\Theta)\in \Art$ boils down to a relatively simple Euler characteristic computation:
\begin{lemma}\label{Lem: Euler characteristic of B and C}
Let $(A,\Theta)\in \Art$, and assume that $\Theta$ has transversal $\mathrm{A}_1$ singularities and equality holds in \ref{Equ: Sing Locus Art and base locus}, then
\[\chi(\Sing(\Theta))=(-1)^{g-2d} t!(g-t)!\binom{t-1}{d-1}\binom{g-t-1}{d-1} \,.\]
If $C\in | L\restr{\Sing(\Theta)}|$ is smooth, then
\[ \chi(C)=(-1)^{g-2d-1} t!(g-t)!c_{t-d,g-t-d}\,,\]
where $c_{m,n}$ is defined by the generating series
\[\frac{x+y}{(1-x)^d(1-y)^d(1-x-y)}=\sum_{m,n} c_{m,n} x^m y^n \,. \]
\end{lemma}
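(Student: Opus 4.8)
By Proposition~\ref{PropComplAbVar} and the hypothesis that equality holds in~\ref{Equ: Sing Locus Art and base locus}, we have $\Sing(\Theta)=\pi(B_X\times B_Y)$, where $\pi:X\times Y\to A$ is the isogeny of degree $d$, and $B_X\subset X$, $B_Y\subset Y$ are the base loci of $L_X$, $L_Y$. The plan is to first compute $\chi(B_X)$ and $\chi(B_Y)$, then combine them, and finally deal with the isogeny degree. Since $h^0(X,L_X)=d\le t=\dim X$, a general member of $|L_X|$ is smooth by Bertini (away from $B_X$), and $B_X$ is the transversal intersection of $d$ general divisors in $|L_X|$; the transversality is exactly the content of the transversal $\mathrm{A}_1$ hypothesis, which forces $B_X$ to be smooth of codimension $d$ (cf.\ Proposition~\ref{Prop: Appendix: Hessian max rank is equiv to smooth sing locus}). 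So $B_X$ is a smooth complete intersection of $d$ divisors each algebraically equivalent to $c_1(L_X)$ inside the $t$-dimensional abelian variety $X$.

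\textbf{Euler characteristic of the base locus.} For a smooth subvariety $Z$ of dimension $n$ in an abelian variety, $\chi(Z)=(-1)^n\int_Z c_n(T_Z)$, and since the tangent bundle of the ambient abelian variety is trivial, the normal bundle sequence gives $c(T_Z)=c(N_{Z/X})^{-1}$. For $B_X$ a complete intersection of $d$ divisors of class $\lambda:=c_1(L_X)$, we get $c(T_{B_X})=(1+\lambda)^{-d}$ restricted to $B_X$, so $c_{t-d}(T_{B_X})\cap[B_X]$ equals the coefficient of $\lambda^{t-d}$ in $(1+\lambda)^{-d}$, capped with $[B_X]=\lambda^d\cap[X]$. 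Using $\int_X\lambda^t=t!\,h^0(X,L_X)=t!\,d$ wait—more precisely $\int_X\lambda^t=t!$ when $L_X$ has type $\delta$ with $\deg\delta=d$, actually $\int_X c_1(L_X)^t = t!\cdot d$; one must track this carefully. This produces $\chi(B_X)=(-1)^{t-d}\,[\text{coeff of }\lambda^{t-d}\text{ in }(1+\lambda)^{-d}]\cdot\int_X\lambda^t$, i.e.\ $\chi(B_X)=(-1)^{t-d}\binom{-d}{t-d}\cdot(t!\,d)=\binom{t-1}{d-1}(t!\,d)$ after simplifying $(-1)^{t-d}\binom{-d}{t-d}=\binom{t-1}{d-1}$. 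Analogously $\chi(B_Y)=\binom{g-t-1}{d-1}((g-t)!\,d)$.

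\textbf{Combining via the isogeny.} Since $\pi$ is étale of degree $d$, $\chi(B_X\times B_Y)=d\cdot\chi(\Sing\Theta)$, while $\chi(B_X\times B_Y)=\chi(B_X)\chi(B_Y)$. Hence $\chi(\Sing\Theta)=\tfrac1d\chi(B_X)\chi(B_Y)$. Plugging in gives $\chi(\Sing\Theta)=\tfrac1d\cdot\binom{t-1}{d-1}t!\,d\cdot\binom{g-t-1}{d-1}(g-t)!\,d=d\cdot t!(g-t)!\binom{t-1}{d-1}\binom{g-t-1}{d-1}$—so I will need to recheck the normalization $\int_X c_1(L_X)^t$ against the stated formula, as the claimed answer has no factor of $d$; most likely $\int_X c_1(L_X)^t=t!$ (not $t!\,d$) with the $d$ in $h^0$ absorbed by the different convention, or one of the $d$'s cancels against $\deg\pi$. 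The sign $(-1)^{g-2d}$ matches $\dim\Sing\Theta=g-2d$, consistent with $(-1)^{t-d}\cdot(-1)^{g-t-d}=(-1)^{g-2d}$.

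\textbf{The divisor $C$.} Here $C\in|L\restr{\Sing\Theta}|$ is smooth, so $C=\Sing\Theta\cap(\text{pullback of a divisor})$; via $\pi$, $\pi^*L=L_X\boxtimes L_Y$, so $\widetilde C:=\pi^{-1}(C)$ sits in $B_X\times B_Y$ as a divisor of class $\mu_X+\mu_Y$ where $\mu_X=\lambda|_{B_X}$, $\mu_Y$ analogously. Then $c(T_{\widetilde C})=(1+\lambda)^{-d}(1+\mu)^{-d}(1+\mu_X+\mu_Y)^{-1}$ restricted to $\widetilde C$, and $\chi(\widetilde C)=(-1)^{\dim\widetilde C}\int_{\widetilde C}c_{\text{top}}$, which unwinds to the coefficient of $x^{t-d}y^{g-t-d}$ in $\tfrac{(x+y)}{(1-x)^d(1-y)^d(1-x-y)}$, multiplied by the appropriate products of factorials coming from $\int_X\lambda^t$ and $\int_Y\mu^{g-t}$, then divided by $d$ for the isogeny. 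This is exactly $c_{t-d,g-t-d}$ times $t!(g-t)!$, with sign $(-1)^{g-2d-1}$ since $\dim C=g-2d-1$. \textbf{The main obstacle} is bookkeeping: getting the normalization of $\int_X c_1(L_X)^t$, the factor $\deg\pi=d$, and the alternating signs all consistent so that the spurious $d$'s cancel and the generating-function extraction matches the stated $c_{m,n}$ exactly. The conceptual steps (Bertini, complete-intersection Chern class computation on abelian varieties, multiplicativity of $\chi$ under étale covers) are routine; the risk is an off-by-$d$ or sign error, which I would guard against by checking the formula against the $\delta=(2)$, small-$t$ cases already computed in the introduction ($\deg\Lambda_{\Sing\Theta}=t!(g-t)!(t-1)(g-t-1)$, which should follow from $\chi(B)$ and $\chi(C)$ via Theorem~\ref{Theorem: Gauss degree theta divisor with smooth singular locus}).
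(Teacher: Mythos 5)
Your route is the same as the paper's (Gauss--Bonnet for the smooth complete intersections $B_X$, $B_Y$, triviality of $T_X$ to reduce to $c(N)^{-1}$, multiplicativity of $\chi$ under the \'etale cover, and extraction of the coefficient of $x^{t-d}y^{g-t-d}$ from the generating series for $C$), but the computation as you have executed it does not close, and you say so yourself. The concrete error is the degree of the cover: the isogeny $\pi:X\times Y\to A$ of Proposition \ref{PropComplAbVar} has kernel $K\simeq\mathrm{Ker}(\Phi_{L_X})\simeq\bigoplus_i(\mathbb{Z}/a_i\mathbb{Z})^2$, whose order is $(a_1\cdots a_k)^2=d^2$, not $d$; since translation by $\mathrm{Ker}(\Phi_{L_X})$ preserves the base locus $B_X$, the induced map $B_X\times B_Y\to\Sing(\Theta)$ is \'etale of degree $d^2$. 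With the correct normalizations $\int_X c_1(L_X)^t=d\cdot t!$ and $\int_Y c_1(L_Y)^{g-t}=d\cdot(g-t)!$ (Riemann--Roch, since $h^0=\chi=d$), one gets $\chi(B_X)\chi(B_Y)=(-1)^{g-2d}d^2\binom{t-1}{d-1}\binom{g-t-1}{d-1}t!(g-t)!$, and dividing by $\deg\pi=d^2$ kills both spurious factors of $d$ at once. Your guess that ``one of the $d$'s cancels against $\deg\pi$'' is half right; both do, and leaving this unresolved means your argument never actually arrives at the stated formula. The same correction ($d^2$, not $d$) is needed in your treatment of $C$, where the degree of $x^ty^{g-t}$ on $X\times Y$ is $d^2\,t!(g-t)!$.

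A secondary slip: you invoke Gauss--Bonnet as $\chi(Z)=(-1)^n\int_Z c_n(T_Z)$; the correct statement has no sign, $\chi(Z)=\int_Z c_n(T_Z)$. The sign $(-1)^{t-d}$ in $\chi(B_X)$ comes entirely from the binomial coefficient $\binom{-d}{t-d}=(-1)^{t-d}\binom{t-1}{d-1}$, so your extra $(-1)^{t-d}$ cancels it and you end up asserting the final sign $(-1)^{g-2d}$ by dimension count rather than deriving it. Both issues are bookkeeping rather than conceptual, and the skeleton of the argument is exactly the paper's, but as written the proof does not produce the claimed constants.
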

\begin{proof}
We keep the notation of the previous section. By assumption there is an isogeny of degree $d^2$, $\pi: B_X\times B_Y\to \Sing(\Theta)$, thus
\[  \chi(\Sing(\Theta))=\chi(B_X\times B_Y)/d^2\,.\]
The Euler characteristic of $B_X$ is the degree of the top Chern class $c_{t-d}(T_{B_X})$ of the tangent bundle. $B_X$ is the complete intersection of $d$ divisors in $|L_X|$ thus $N_{B_X/X}=L_X^{\oplus d}\restr{B_X}$ and by \cite[Ex. 3.2.12]{Fulton1998} and Riemann-Roch we have
\begin{align*}
    \deg(c(T_{B_X}))&=\deg \left(c(T_X)\restr{B_X}\cdot (c(L_X\restr{B_X}))^{-d} \right)\\
    &=\deg \left(1\cdot (1+c_1(L_X))^{-d}\cap [B_X]\right) \\
    &=\deg \sum_{k\geq 0} \binom{d+k-1}{d-1}(-1)^k c_1(L_X)^{k+d}\cap [X]  \\
    &= (-1)^{t-d}d\binom{t-1}{d-1} t! \,.
\end{align*}
The same computation applies to $B_Y$, thus
\[ \chi(B_X\times B_Y)=(-1)^{g-2d}d^2 \binom{t-1}{d-1}\binom{g-t-1}{d-1} t!(g-t)! \,. \]
We now compute $\chi(C)$. Let $C'=\pi^\ast C\subset X\times Y$. Let $x =c_1(p_X^\ast L_X)\in \mathrm{CH}^1(X\times Y)$ and $y= c_1(p_Y^\ast L_Y)\in \mathrm{CH}^1(X\times Y)$. We have $C'\in|(L_X\boxtimes L_Y)\restr{B_X\times B_Y}|$,
\[\left[ C' \right]=x^d y^d(x+y) \in \mathrm{CH}^{2d+1}(X\times Y)\,, \]
and $N_{C'/X\times Y}=\left((p_X^\ast L_X)^{\oplus d}\oplus (p_Y^\ast L_Y)^{\oplus d} \oplus (L_X\boxtimes L_Y)\right)\restr{C'}$. By \cite[Ex. 3.2.12]{Fulton1998} we have
\begin{align*}
     c(T_{C'})&= {c(T_{X \times Y}\restr{C'})}\cdot{c(N_{C'/X\times Y})^{-1}}\\
     &=(1+x)^{-d}(1+y)^{-d}(1+x+y)^{-1} \cap [C'] \\
     &=\frac{x^{d}y^d(x+y)}{(1+x)^d(1+y)^d(1+x+y)} \cap [X\times Y] \\
     &=x^dy^d\sum_{m,n}(-1)^{m+n+1}c_{m,n} x^m y^n\,. 
     \end{align*}
The only term of degree $g$ in the above series which does not vanish is $x^t y^{g-t}$ and
\[\deg ( x^{t} y^{g-t})=d^2t!(g-t)! \]
by Riemann-Roch. Thus
\[ \chi(C)=\chi(C')/d^2=(-1)^{g-2d-1} t!(g-t)! c_{t-d,g-t-d}\,.\]
\end{proof}
We have the following:
\begin{theorem}\label{Thm: Gauss Degree on A^d_g_1,g_2 }
Let $\delta\in \{(2),(3),(2,2)\}$, let $t\geq d \coloneqq \deg \delta$, let $(A,\Theta)\in \Art$ be general and $\GG:\Theta \dashrightarrow \PP^{g-1}$ be the Gauss map. Then
\begin{align*}
\deg \mathcal{G}=  g! - t!(g-t)! a_{t-d,g-t-d}    \,. 
\end{align*}
where $a_{m,n}$ is defined by the generating series
\[ \frac{1}{(1-x)^d(1-y)^d}+\frac{1}{(1-x)^d(1-y)^d(1-x-y)}=\sum_{m,n} a_{m,n} x^m y^n \,. \]
More explicitly,
\[ \deg \GG = t!(g-t)! \left(  \binom{t-1}{d-1} \binom{g-t-1}{d-2} + \sum_{k=2}^d \binom{t-k}{d-k}\binom{g-t-1+k}{d-1} \right) \,. \]
\end{theorem}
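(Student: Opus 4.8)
The plan is to combine Debarre's structural results with Theorem~\ref{Theorem: Gauss degree theta divisor with smooth singular locus} and the Euler-characteristic computation of Lemma~\ref{Lem: Euler characteristic of B and C}, and then to reduce the resulting expression to the stated binomial sum by a short generating-function argument. First I would invoke Theorem~\ref{Thm: Debarre: Cases where Ard verifies Star}(i): for $\delta\in\{(2),(3),(2,2)\}$ and $t\geq d=\deg\delta$, a general $(A,\Theta)\in\Art$ has transversal $\mathrm{A}_1$ singularities and equality holds in~\ref{Equ: Sing Locus Art and base locus}, so $B\coloneqq\Sing(\Theta)=\pi(B_X\times B_Y)$ with $B_X\subset X$, $B_Y\subset Y$ the base loci of $|L_X|$, $|L_Y|$, which are complete intersections of the expected codimension $d$. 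Hence the hypotheses of Theorem~\ref{Theorem: Gauss degree theta divisor with smooth singular locus} hold, $\dim B=(t-d)+(g-t-d)=g-2d$, and a general smooth $C\in|L\restr{B}|$ has $\dim C=g-2d-1$ (the boundary case $g=2d$, where $B$ is finite and $C=\emptyset$, is subsumed by the same formula). Applying Theorem~\ref{Theorem: Gauss degree theta divisor with smooth singular locus} and inserting $\chi(B)=(-1)^{g-2d}t!(g-t)!\binom{t-1}{d-1}\binom{g-t-1}{d-1}$ and $\chi(C)=(-1)^{g-2d-1}t!(g-t)!\,c_{t-d,g-t-d}$ from Lemma~\ref{Lem: Euler characteristic of B and C}, the signs cancel and
\[ \deg\GG=g!-t!(g-t)!\left(2\binom{t-1}{d-1}\binom{g-t-1}{d-1}+c_{t-d,g-t-d}\right). \]

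To reach the first displayed form of the answer, I would put $m=t-d$, $n=g-t-d$, so that $\binom{t-1}{d-1}=[x^m](1-x)^{-d}$ and $\binom{g-t-1}{d-1}=[y^n](1-y)^{-d}$, and use the trivial identity $(1-x-y)+(x+y)=1$, which gives
\[ \frac{1}{(1-x)^d(1-y)^d}+\frac{x+y}{(1-x)^d(1-y)^d(1-x-y)}=\frac{1}{(1-x)^d(1-y)^d(1-x-y)}\,. \]
Comparing coefficients of $x^m y^n$ shows $2\binom{t-1}{d-1}\binom{g-t-1}{d-1}+c_{m,n}=a_{m,n}$, i.e. $\deg\GG=g!-t!(g-t)!\,a_{t-d,g-t-d}$.

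For the explicit binomial formula I would evaluate $\binom{g}{t}-a_{m,n}$, using $\deg\GG=t!(g-t)!\bigl(\binom{g}{t}-a_{m,n}\bigr)$ and $\sum_{m,n}a_{m,n}x^my^n=(2-x-y)(1-x)^{-d}(1-y)^{-d}(1-x-y)^{-1}$. A partial-fraction decomposition of $(2-x-y)(1-x)^{-d}(1-x-y)^{-1}$ in the variable $x$ (poles at $x=1$ of order $d$ and at $x=1-y$ with residue $y^{-d}$) yields
\[ \frac{2-x-y}{(1-x)^d(1-y)^d(1-x-y)}=\frac{1}{y^d(1-y)^d(1-x-y)}+\frac{1}{(1-x)^d(1-y)^d}-\sum_{k=1}^d\frac{y^{k-d-1}}{(1-x)^k(1-y)^d}\,. \]
Extracting $[x^my^n]$, the first term gives $\sum_{q=0}^{n+d}\binom{m+q}{q}\binom{n+2d-1-q}{d-1}$, which by the hockey-stick identity $\sum_{q=0}^{Q}\binom{q+a}{a}\binom{Q-q+b}{b}=\binom{Q+a+b+1}{a+b+1}$ collapses to $\binom{m+n+2d}{m+d}=\binom{g}{t}$; the second gives $\binom{m+d-1}{d-1}\binom{n+d-1}{d-1}$; the third gives $\sum_{k=1}^d\binom{m+k-1}{k-1}\binom{n+2d-k}{d-1}$. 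Hence $\binom{g}{t}-a_{m,n}=-\binom{m+d-1}{d-1}\binom{n+d-1}{d-1}+\sum_{k=1}^d\binom{m+k-1}{k-1}\binom{n+2d-k}{d-1}$, and splitting off the $k=d$ term and using $\binom{n+d}{d-1}-\binom{n+d-1}{d-1}=\binom{n+d-1}{d-2}$ rewrites this as $\binom{m+d-1}{d-1}\binom{n+d-1}{d-2}+\sum_{k=2}^d\binom{m+d-k}{d-k}\binom{n+d-1+k}{d-1}$, which is the claimed expression after substituting back $t=m+d$, $g-t=n+d$.

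None of the individual steps is conceptually deep — essentially all of the geometry is packaged into Theorems~\ref{Thm: Debarre: Cases where Ard verifies Star} and~\ref{Theorem: Gauss degree theta divisor with smooth singular locus} and Lemma~\ref{Lem: Euler characteristic of B and C} — so the only place I expect real bookkeeping is the last paragraph: getting the partial-fraction decomposition right (the negative powers of $y$ in the individual summands are spurious and must cancel in $[x^my^n]$) and spotting the hockey-stick identity that turns the first sum into exactly $\binom{g}{t}$. I would also take care to confirm that the degenerate case $g=2d$ (isolated nodes, $B$ finite, $C$-term absent) really is covered by Theorem~\ref{Theorem: Gauss degree theta divisor with smooth singular locus}, so that the formula holds uniformly on the whole range $d\leq t\leq g/2$.
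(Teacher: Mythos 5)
Your argument is correct, and its geometric core --- invoking Theorem~\ref{Thm: Debarre: Cases where Ard verifies Star}(i) to verify the hypotheses, combining Theorem~\ref{Theorem: Gauss degree theta divisor with smooth singular locus} with Lemma~\ref{Lem: Euler characteristic of B and C} to get $\deg\GG=g!-t!(g-t)!\bigl(2\binom{t-1}{d-1}\binom{g-t-1}{d-1}+c_{t-d,g-t-d}\bigr)$, and then using $(1-x-y)+(x+y)=1$ to package this into the single coefficient $a_{t-d,g-t-d}$ --- is exactly the paper's proof. The only place you genuinely diverge is the final, purely combinatorial step: the paper derives the explicit binomial formula from Lemma~\ref{Lem: generating series Amn}, which computes the coefficients of $(1-x)^{-d}(1-y)^{-d}(1-x-y)^{-1}$ by a bijection with weak compositions $0\leq a_1\leq\cdots\leq a_{m+d+1}=n+d$ satisfying $a_{m+1}\geq d$, whereas you obtain the same closed form by a partial-fraction decomposition in $x$ followed by the Vandermonde convolution $\sum_{q=0}^{Q}\binom{q+a}{a}\binom{Q-q+b}{b}=\binom{Q+a+b+1}{a+b+1}$. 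I checked your decomposition (it follows from iterating $\frac{1}{(1-x)(1-x-y)}=\frac{1}{y}\bigl(\frac{1}{1-x-y}-\frac{1}{1-x}\bigr)$), the coefficient extraction, and the reindexing $k\mapsto d-k+1$; your sum agrees term by term with the paper's, so both routes land on the same formula. The trade-off is purely aesthetic: the bijective proof gives a combinatorial meaning to the coefficients $A_{m,n}$, while your route is shorter and more mechanical. One small point in your favour: you explicitly note that the boundary case $g=2d$ (where $B$ is finite, $C=\emptyset$, and the $\chi(C)$-term vanishes, recovering the isolated-nodes formula) is covered uniformly, which the paper leaves implicit.
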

\begin{remark}
    The theorem above holds more generally when $(A,\Theta)\in 
    \Art$, $\Sing(\Theta)$ is smooth of dimension $g-2d$ and equality holds in \ref{Equ: Sing Locus Art and base locus}, but we do not know for which values of $\delta$, $t$ and $g$ this happens in general.
\end{remark}
\begin{proof}
By Theorem \ref{Thm: Debarre: Cases where Ard verifies Star} and Theorem \ref{Theorem: Gauss degree theta divisor with smooth singular locus}, there is a smooth divisor $C\in |{L_A\restr{\Theta_{\sing}}} |$ such that 
\begin{align*} 
\deg \GG &= g!-2(-1)^{g-2d}\chi(\Sing(\Theta) )-(-1)^{g-2d-1}\chi(C))\,.
\end{align*}
By \ref{Lem: Euler characteristic of B and C} we have
\begin{align*}
(-1)^{g-2d}\chi(\Sing(\Theta))&=t!(g-t)! \binom{t-1}{d-1}\binom{g-t-1}{d-1} \\
&= t!(g-t)! \left\{ \frac{1}{(1-x)^d(1-y)^d} \right\}_{x^{t-d}y^{g-t-d}}\,,
\end{align*}
Thus
\[ 2\chi(\Sing(\Theta))+\chi(C)=t!(g-t)! a_{m,n} \,, \]
where
\begin{align*}
\sum_{m,n\geq 0} a_{m,n}x^m y^n &=
 \frac{2}{(1-x)^d(1-y)^d}+\frac{x+y}{(1-x)^d(1-y)^d(1-x-y)} \\ &=\frac{1}{(1-x)^d(1-y)^d}+\frac{1}{(1-x)^d(1-y)^d(1-x-y)}\,.
\end{align*}
We use the combinatorial Lemma \ref{Lem: generating series Amn} below to conclude
\begin{align*}
\deg \GG &= g!-t!(g-t)! a_{t-d,g-t-d} \\
   &=g!-t!(g-t)! \left( \binom{t-1}{d-1} \binom{g-t-1}{d-1} + \binom{t+g-t}{t} \right.  \\
    &\quad \left.- \sum_{k=1}^d \binom{t-k}{t-d}\binom{g-t-1+k}{d-1} \right) \\
    &=t!(g-t)! \left( \binom{t-1}{d-1} \binom{g-t-1}{d-2} + \sum_{k=2}^d \binom{t-k}{d-k}\binom{g-t-1+k}{d-1} \right)\,.
\end{align*}
\end{proof}


The generating series of the theorem has the following combinatorial interpretation: 
\begin{lemma}\label{Lem: generating series Amn}
Consider the generating series
\[ \frac{1}{(1-x)^{d}(1-y)^{d}(1-x-y)}= \sum_{m,n\geq 0} A_{m,n} x^m y^n \,. \]
then the coefficient $A_{m,n}$ is equal to the number of (weak) $m+d+1$ compositions of $n+d$
\[ 0 \leq a_1 \leq \cdots\leq  a_{m+d} \leq a_{m+d+1}=n+d \,, \]  such that
$a_{m+1} \geq d$. This number is equal to
\[ A_{m,n}= \binom{m+n+2d}{m+d} - \sum_{k=1}^d \binom{m+d-k}{m} \binom{n+d-1+k}{d-1} \,. \]
\end{lemma}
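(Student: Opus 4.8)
The plan is to prove the two parts of the lemma separately: first that the coefficient $A_{m,n}$ equals the number $N_{m,n}$ of sequences $0\le a_1\le\cdots\le a_{m+d}\le a_{m+d+1}=n+d$ with $a_{m+1}\ge d$, and then that $N_{m,n}$ is given by the stated closed form.

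For $A_{m,n}=N_{m,n}$, I would use generating functions, conditioning on the value $v\coloneqq a_{m+1}$. For fixed $v\ge d$: the head $0\le a_1\le\cdots\le a_m\le v$ is a size-$m$ multiset from $\{0,\dots,v\}$, contributing $\binom{m+v}{m}$ choices; writing $v=r+d$ with $r\ge0$ and subtracting $v$ from each of the $d-1$ free tail entries, the tail $v\le a_{m+2}\le\cdots\le a_{m+d}\le n+d$ becomes a size-$(d-1)$ multiset from $\{0,\dots,n-r\}$, contributing $\binom{n-r+d-1}{d-1}$ choices when $n\ge r$ and none otherwise. Hence $N_{m,n}=\sum_{r\ge0}\binom{m+r+d}{m}\binom{n-r+d-1}{d-1}$, where the $r$-th summand vanishes unless $n\ge r$. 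Using $\sum_{m\ge0}\binom{m+r+d}{m}x^m=(1-x)^{-(r+d+1)}$ and $\sum_{n\ge r}\binom{n-r+d-1}{d-1}y^n=y^r(1-y)^{-d}$ and then summing the geometric series in $r$,
\[ \sum_{m,n\ge0}N_{m,n}\,x^m y^n=\sum_{r\ge0}\frac{y^r}{(1-x)^{r+d+1}(1-y)^{d}}=\frac{1}{(1-x)^{d}(1-y)^{d}(1-x-y)}\,, \]
which is the defining series of $A_{m,n}$; hence $N_{m,n}=A_{m,n}$.

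For the closed form: with no constraint imposed, the sequences $0\le a_1\le\cdots\le a_{m+d}\le n+d$ are the size-$(m+d)$ multisets from $\{0,\dots,n+d\}$, of which there are $\binom{m+n+2d}{m+d}$. From these I would subtract those with $a_{m+1}\le d-1$, counted by conditioning on $a_{m+1}=d-k$ for $k=1,\dots,d$: the head $0\le a_1\le\cdots\le a_m\le d-k$ gives $\binom{m+d-k}{m}$ choices and the tail $d-k\le a_{m+2}\le\cdots\le a_{m+d}\le n+d$ gives $\binom{n+d-1+k}{d-1}$ choices, so the quantity removed is $\sum_{k=1}^{d}\binom{m+d-k}{m}\binom{n+d-1+k}{d-1}$. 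Subtracting yields exactly the asserted formula.

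There is no genuine obstacle; the argument is elementary. The one point needing care is that the tail binomial coefficients must be read as honest enumeration counts, not via the polynomial extension of $\binom{\cdot}{\cdot}$ — concretely, the sum over $v$ (equivalently over $r$) must be kept truncated at $v\le n+d$, which is precisely what makes the inner $y$-series above supported on $n\ge r$ and equal to $y^r(1-y)^{-d}$. Alternatively, the closed form can be read off directly from the series: expanding $\frac{1}{1-x-y}=\sum_{r\ge0}\frac{y^r}{(1-x)^{r+1}}$ and extracting the coefficient of $x^m y^n$ via the Vandermonde-type identity $\sum_{s=0}^{N}\binom{s+b}{b}\binom{N-s+a}{a}=\binom{N+a+b+1}{a+b+1}$ produces it in one stroke; I favour the route above because a single computation delivers both the compositional interpretation and the formula.
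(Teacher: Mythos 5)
Your proof is correct. For the closed form you do exactly what the paper does: count all $(m+d+1)$-compositions of $n+d$, namely $\binom{m+n+2d}{m+d}$, and subtract those with $a_{m+1}=d-k$ for $k=1,\dots,d$ by splitting each into an independent head count $\binom{m+d-k}{m}$ and tail count $\binom{n+d-1+k}{d-1}$; after the index shift $k\mapsto d-k$ this is literally the paper's final computation. The one place you genuinely diverge is in identifying $A_{m,n}$ with the constrained count: the paper multiplies out $\frac{1}{(1-x)^d}\cdot\frac{1}{(1-y)^d(1-x-y)}$, reads off $A_{m,n}$ as a disjoint union over $k$ of pairs (a $(k+1)$-composition of $d-1$, an $(m-k+1+d)$-composition of $n$), and then writes down an explicit bijection from that union onto the compositions with $a_{m+1}\ge d$; you instead condition directly on $v=a_{m+1}$, obtain $N_{m,n}=\sum_{r\ge 0}\binom{m+r+d}{m}\binom{n-r+d-1}{d-1}$, and resum the geometric series in $r$ to recover $\frac{1}{(1-x)^d(1-y)^d(1-x-y)}$. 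Your route trades the construction (and inversion) of an explicit bijection for a short generating-function manipulation; it is arguably cleaner, and the truncation point you flag (keeping the tail sum supported on $n\ge r$ so that the inner series is $y^r(1-y)^{-d}$) is exactly the detail that makes the resummation honest.
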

\begin{proof}
Recall that by a (here we mean weak) $m$ composition of $n$ we mean an $m$-tuple $(a_1,\dots , a_m)$ such that
\[ 0 \leq a_1 \leq \cdots \leq a_{m-1} \leq a_{m} = n \,. \]
The number of $m$ compositions of $n$ is equal to 
\[ \binom{n+m-1}{m-1} \,. \]
We know that
\[ \frac{1}{(1-y)^d}= \binom{n+d-1}{d-1} y^n  \]
is the generating series for the $d$-compositions of $n$. Moreover,
\[ \frac{1}{1-x-y}= \sum_{m,n \geq 0} \binom{m+n}{m} x^m y^n \]
is the generating series for the $m+1$-compositions of $n$. Thus
\[ \frac{1}{(1-y)^d(1-x-y)} \]
is the generating series for the $(m+d+1)$-compositions of $n$. Now we can interpret $1/(1-x)^d$ as the generating series of the $m+1$ compositions of $d-1$. Thus the coefficient $A_{m,n}$ is in bijection with the set
\[ \bigsqcup_{k=0}^m \{ k+1 \text{ composition of } d-1 \} \times \{m-k+1+d \text{ composition of } n \} \,. \]
Now to a $k+1$-composition of $d-1$ $(a_1,\dots , a_{k+1})$ and a $m-k+1+d$-composition of $n$ $(b_1,\dots,b_{m-k+1+d})$, we associate a $m+d+1$-composition of $n+d$ in the following way:
\begin{align*}
    \tilde{a}_i & =a_i \quad \text{for} \quad 1\leq i \leq k \\
    \tilde{a}_i & = b_{i-k} + a_{k+1} + 1 \quad \text{for} \quad k+1 \leq i \leq m+d+1 \,.
\end{align*}
Now it is immediate that this gives a bijection to all the $m+d+1$-compositions of $n+d$ such that $a_{m+1}\geq d$. The inverse map is given by choosing $k+1$ to be the first coefficient of the composition above $d$.\\
Thus
\begin{align*} 
A_{m,n}&= \# \{ m+d+1 \text{ compositions of } n+d \} \\
& \quad - \sum_{k=0}^{d-1} \# \{ m+d+1 \text{ compositions of } n+d \text{ such that } a_{m+1}=k \} \\
&= \# \{ m+d+1 \text{ compositions of } n+d \}  \\
& \quad - \sum_{k=0}^{d-1} \# \{ m+1 \text{ compositions of } k \} \times \{ d \text{ compositions of } n+d-k \} \\
&= \binom{m+n+2d}{m+d} - \sum_{k=0}^{d-1} \binom{m+k}{m} \binom{n+2d-1-k}{d-1}
\end{align*}
\end{proof}

\subsection{Numerical Analysis of the Degree}\label{Sec: Numerical Analysis Degree}
For an irreducible locus $Z\subset \mathcal{A}_g$, we denote by $\deg \GG(Z)$ the degree of the Gauss map for a general $(A,\Theta)\in Z$. We close this section with a numerical analysis of the degree $\deg \mathcal{G} \left( \mathcal{A}^\delta_{t,g-t} \right)$ as $t$ varies in $[\![ d, \lfloor g/2 \rfloor ]\!]$. We have the following:
\begin{proposition}\label{Prop: Variation deg G Ard}
For $\delta\in \{(2),(3),(2,2)\}$ and $g\geq 2d\coloneqq 2\deg \delta$, the degree of the Gauss map on the loci $\mathcal{A}^\delta_{t,g-t}$,
\begin{align*}
    [\![ d , \lfloor g/2 \rfloor ]\!] &\to \NN \\
    t &\mapsto \deg \mathcal{G}\left( \mathcal{A}^\delta_{t,g-t} \right)
\end{align*}
is a strictly decreasing function of $t$. In particular, the degree of the Gauss map separates these loci.
\end{proposition}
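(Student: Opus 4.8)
The plan is to feed the closed formula of Theorem~\ref{Thm: Gauss Degree on A^d_g_1,g_2 } into a finite-difference computation. Write $d=\deg\delta\in\{2,3,4\}$ and, for $d\le t\le\lfloor g/2\rfloor$, factor the Gauss degree as
\[
\deg\GG(\Art)=t!\,(g-t)!\,R_d(t),\qquad R_d(t)=\binom{t-1}{d-1}\binom{g-t-1}{d-2}+\sum_{k=2}^{d}\binom{t-k}{d-k}\binom{g-t-1+k}{d-1},
\]
where, for fixed $g$, the quantity $R_d$ is a polynomial in $t$. The function in the statement is defined only at the integers $d\le t\le\lfloor g/2\rfloor$, and there is nothing to prove when $\lfloor g/2\rfloor\le d$, so strict decrease means exactly that
\[
D(t):=\deg\GG(\mathcal{A}^\delta_{t,g-t})-\deg\GG(\mathcal{A}^\delta_{t+1,g-t-1})>0\qquad\text{for all }d\le t\le\lfloor g/2\rfloor-1.
\]
Pulling out the common factorials gives $D(t)=t!\,(g-t-1)!\,Q_d(t)$ with $Q_d(t):=(g-t)\,R_d(t)-(t+1)\,R_d(t+1)$, so the whole problem reduces to showing that the polynomial $Q_d$ is positive on this range. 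Note that on the range $d\le t\le\lfloor g/2\rfloor-1$, together with $g\ge 2d$, one has $g-2t-1\ge 1$, $t-1\ge d-1>0$ and $g-t-2\ge\lceil g/2\rceil-1>0$; these are exactly the factors one should expect to read off from $Q_d$.

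For $\delta=(2)$ one has $R_2(t)=(t-1)+(g-t+1)=g$, hence $Q_2(t)=g\,(g-2t-1)>0$. For $\delta=(3)$, expanding $R_3$ and collecting terms in $t$ and $g-t$ (the top-degree terms cancel) gives the compact form $R_3(t)=\tfrac{g}{2}\bigl(t(g-t)-g+3\bigr)$; substituting into $Q_3$ and simplifying — it is convenient to set $k=g-2t-1$ and to use $(t+1)(g-t-1)=t(g-t)+k$, which collapses the expression — one obtains
\[
Q_3(t)=\tfrac{g}{2}\,(t-1)\,(g-2t-1)\,(g-t-2),
\]
a product of three strictly positive factors, so $D(t)>0$ in this case.

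For $\delta=(2,2)$ (so $d=4$) the recipe is identical: expand $R_4(t)$, simplify it (there is again substantial cancellation, leaving a polynomial of small degree in $t(g-t)$ and $g$), form $Q_4(t)=(g-t)R_4(t)-(t+1)R_4(t+1)$, and — again after the substitution $g=2t+k+1$ — simplify $Q_4$ to a product of linear factors each positive on $4\le t\le\lfloor g/2\rfloor-1$ (here $g\ge 8$). This last case is where essentially all the work lies: $R_4$ is noticeably bulkier than $R_3$, so carrying the cancellations through by hand is the main obstacle; the $\delta=(3)$ computation above is a faithful template, and one should again find $Q_4$ to be $g-2t-1$ times a manifestly positive polynomial. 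Once $D(t)>0$ is established for all three polarisation types, the function $t\mapsto\deg\GG(\mathcal{A}^\delta_{t,g-t})$ is strictly decreasing on $[\![d,\lfloor g/2\rfloor]\!]$, hence takes pairwise distinct values, and therefore the Gauss degree separates the loci $\mathcal{A}^\delta_{t,g-t}$ — which is the final assertion.
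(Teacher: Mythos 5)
Your overall strategy — feed the closed formula of Theorem \ref{Thm: Gauss Degree on A^d_g_1,g_2 } into a finite difference, pull out the common factorials, and check positivity of the remaining polynomial case by case in $\delta$ — is exactly the paper's. Your $\delta=(2)$ and $\delta=(3)$ computations are correct and complete: I checked that $R_3(t)=\tfrac{g}{2}\bigl(t(g-t)-g+3\bigr)$ agrees with the binomial formula, and your factorization $Q_3(t)=\tfrac{g}{2}(t-1)(g-2t-1)(g-t-2)$ is right and is in fact cleaner than the paper's treatment of this case (the paper bounds a quadratic by convexity and endpoint evaluation instead of factoring). You also correctly identify that positivity only needs to hold for $d\le t\le\lfloor g/2\rfloor-1$, which matters: for instance when $d=4$ and $g\in\{8,9\}$ the difference $D(4)$ is actually $\le 0$, but the range is then a single point and there is nothing to prove.

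The gap is the $\delta=(2,2)$ case, which you yourself flag as ``where essentially all the work lies'' and then do not carry out. As written, the key assertion — that $Q_4$ is $(g-2t-1)$ times a manifestly positive polynomial, indeed a product of positive linear factors — is stated without verification, and there is no a priori reason to expect it: the paper itself does \emph{not} obtain such a factorization for this case, but instead writes the degree as $t!(g-t)!$ times $(2t+1-g)$ times a quartic $h_g(t)$ and controls the sign of $h_g$ on the interval by analyzing $h_g'$ and evaluating at $t=4$ (the relevant quadratic factor of $h_g'$ has discriminant $(g-4)^2+1$, so one should not expect everything in sight to split over $\QQ$). Your claim does, however, turn out to be true: a direct (if tedious) expansion gives
\begin{equation*}
Q_4(t)=(g-t)R_4(t)-(t+1)R_4(t+1)=\frac{g}{12}\,(g-2t-1)(t-1)(t-2)(g-t-2)(g-t-3)\,,
\end{equation*}
each factor being strictly positive for $4\le t\le\lfloor g/2\rfloor-1$ and $g\ge 10$ (and the range is empty for $g=8,9$). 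With this identity supplied and verified, your argument closes and is arguably tidier than the paper's; without it, the hardest of the three cases remains unproved.
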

\begin{remark}
The proposition states that the degree separates the loci $\mathcal{A}^\delta_{t,g-t}$ for fixed $\delta$, fixed $g$, and varying $t$. One could ask if this still hold when $\delta$ varies, i.e. that the degree of the Gauss map is different on all loci $\mathcal{A}^\delta_{t,g-t}$ for a fixed dimension $g$. This holds at least in low dimensions (up to $g=1000$). This is not true anymore when $g$ varies, as one can check that the lowest pair of $g$'s where we have an equality of degrees is $g=28$ and $g=30$, with
\[ \deg \mathcal{G}\left( \mathcal{A}^{3}_{5,28-5}\right)=\deg \mathcal{G}\left( \mathcal{A}^{2}_{7,30-7}\right)=3908824930919408467968000000 \,. \]
\end{remark}
\begin{proof}
We will prove this by looking at the explicit description of the degree. Recall that by \ref{Thm: Gauss Degree on A^d_g_1,g_2 } the degree is given by
\begin{align*} 
F_g(t)&= t!(g-t)!\left(  \binom{t-1}{d-1} \binom{g-t-1}{d-2} + \sum_{k=2}^d \binom{t-k}{t-d}\binom{g-t-1+k}{d-1} \right) \,. 
\end{align*}
We will now prove the proposition by doing each possible value of $\delta$ separately.
\par \emph{Case $\delta=(2)$}. In this case the formula becomes
\[F_g(t)=t!(g-t)!g \,, \]
and this is obviously a decreasing function of $t$ in the range $2\leq t\leq \lfloor g/2 \rfloor$.
\par \emph{Case $\delta=(3)$}. In this case,
\[ F_g(t)=t!(g-t)!(-t^2+gt+3-g) \,. \]
Let $f(x)=-x^2+gx+3-g$. We have
\begin{align*}
    \Delta F_g(t) &\coloneqq  F_g(t+1)-F_g(t)= t!(g-t-1)!(g-2t-1)h_g(t)\,,
\end{align*}
with $h_g(t)=t^2-(g-1)t+g-2 $. Evaluating we have
\begin{align*}
    h_g(3)&=10-2g<0 \quad \text{for }g\geq 6 \\
    h_g\left(\frac{g-1}{2}\right)&=(-g^2+6g-9)/4 <0 \quad \text{for }g\geq 6  \,.
\end{align*}
$h_g$ is convex, thus strictly negative on $[3,(g-1)/2]$, and so $F_g$ is strictly decreasing.
\par \emph{Case $\delta=(2,2)$}.
Now
\[F_g(t)=t!(g-t)!\frac{g}{12}(2t+1-g) h_g(t)\,, \]
where
\[ h_g(x)=x^4+x^3(-2g+2)+x^2(g^2+g-7)+x(-3g^2+11g-8)+2g^2-10g+12 \,. \]
We compute
\[ \frac{\dv h_g}{\dv x}=( 2 x+1-g) ( 2 x^2+2 x(1 - g )+  3 g -8) \,, \]
which is positive for $4\leq x \leq (g-1)/2$. Evaluating at $x=4$ we have
\[ h_g(4)=6(g^2-13g+42) >0 \quad \text{for } g\geq 8 \,. \]
Thus
\[ \Delta F_g(t) <0 \quad \text{for } 4\leq t \leq \lfloor g/2 \rfloor -1 \quad \text{and } g\geq 8 \,,\]
and thus the degree of the Gauss map is strictly decreasing on this range. 
\end{proof}
Finally, we study how this degree compares with the degree of the Gauss map on Jacobians.
\begin{proposition}\label{Prop: comparison Gauss degree Jacobians and Ard loci}
The degree of the Gauss map on Jacobians is always different than on a general member of the loci $\mathcal{A}^\delta_{t,g-t}$. Namely, for $g\geq 7$, $\delta\in \{(2),(3),(2,2)\}$ and $t\geq d$,
\[ \deg \mathcal{G}\left( \mathcal{A}^\delta_{t,g-t} \right) > \det \mathcal{G} \left( \mathcal{J}_g \right) \,. \]
For $g=5$ or $g=6$ the above inequality fails, but the degrees are still different.
\end{proposition}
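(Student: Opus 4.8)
The plan is to reduce the statement, via the explicit degree formula of Theorem~\ref{Thm: Gauss Degree on A^d_g_1,g_2 } together with the monotonicity of Proposition~\ref{Prop: Variation deg G Ard}, to an elementary inequality between factorials and a central binomial coefficient, and then to settle $g=5,6$ by direct enumeration. First recall that a general curve of genus $g\ge 3$ is non-hyperelliptic, so by the second item of the introduction $\deg\GG(\mathcal{J}_g)=\binom{2g-2}{g-1}$. Writing $P_\delta(t)$ for the bracketed polynomial factor of Theorem~\ref{Thm: Gauss Degree on A^d_g_1,g_2 }, so that $\deg\GG(\Art)=t!(g-t)!\,P_\delta(t)$ with
\[ P_\delta(t)=\binom{t-1}{d-1}\binom{g-t-1}{d-2}+\sum_{k=2}^{d}\binom{t-k}{d-k}\binom{g-t-1+k}{d-1}\,, \]
the key observation is that $P_\delta(t)\ge g$ for every admissible $t$ (i.e.\ $d\le t\le g/2$, $g\ge 2d$): for $\delta=(2)$ one computes directly $P_{(2)}(t)=(t-1)+(g-t+1)=g$, while for $\delta\in\{(3),(2,2)\}$ the single nonnegative summand $\binom{g-t+d-1}{d-1}$ already exceeds $g$ in that range, which is a one-line estimate from $g-t\ge g/2\ge d$. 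Since by Proposition~\ref{Prop: Variation deg G Ard} the function $t\mapsto\deg\GG(\Art)$ is strictly decreasing on $[d,\lfloor g/2\rfloor]$, it is minimised at $t=\lfloor g/2\rfloor$, and hence for all admissible $\delta,t$ one gets $\deg\GG(\Art)\ge \lfloor g/2\rfloor!\,\lceil g/2\rceil!\;g$.

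It therefore suffices to prove $\lfloor g/2\rfloor!\,\lceil g/2\rceil!\,g>\binom{2g-2}{g-1}$ for $g\ge 7$, and I would do this by induction, tracking the ratio $r(g):=\lfloor g/2\rfloor!\,\lceil g/2\rceil!\,g\big/\binom{2g-2}{g-1}$. Writing $g=2m$ and $g=2m+1$ separately, an elementary computation of successive quotients gives
\[ \frac{r(2m+1)}{r(2m)}=\frac{(m+1)(2m+1)}{2(4m-1)}\,,\qquad \frac{r(2m+2)}{r(2m+1)}=\frac{(m+1)^{2}}{4m+1}\,, \]
both of which are $\ge 1$ for $m\ge 2$, since $(m+1)(2m+1)-2(4m-1)=(2m-3)(m-1)\ge 0$ and $(m+1)^{2}-(4m+1)=m(m-2)\ge 0$. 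As $r(7)=\tfrac{3!\,4!\cdot 7}{\binom{12}{6}}=\tfrac{1008}{924}=\tfrac{12}{11}>1$, it follows that $r(g)\ge r(7)>1$ for all $g\ge 7$, which yields $\deg\GG(\Art)>\deg\GG(\mathcal{J}_g)$. This factorial inequality is the only real content of the proof: it is asymptotically obvious because $\lfloor g/2\rfloor!\,\lceil g/2\rceil!$ eventually dominates $4^{g}>\binom{2g-2}{g-1}$, but the crossover happens precisely between $g=6$ and $g=7$, so a crude Stirling bound does not suffice and one genuinely needs the monotonicity of $r(g)$ together with the exact base value $r(7)=12/11$.

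For $g=5$ the only admissible locus is $\mathcal{A}^{(2)}_{2,3}$, with $\deg\GG=2!\,3!\cdot 5=60\ne 70=\binom 84=\deg\GG(\mathcal{J}_5)$. For $g=6$ the admissible loci are $\mathcal{A}^{(2)}_{2,4}$, $\mathcal{A}^{(2)}_{3,3}$ and $\mathcal{A}^{(3)}_{3,3}$, with Gauss degrees $2!\,4!\cdot 6=288$, $3!\,3!\cdot 6=216$, and $6!-2\cdot 36=648$ respectively (for the last, $\Sing(\Theta)$ consists of $36$ ordinary double points by Theorem~\ref{Thm: Debarre: Cases where Ard verifies Star} and Lemma~\ref{Lem: Euler characteristic of B and C}, so one is in the isolated-singularity case); since $\deg\GG(\mathcal{J}_6)=\binom{10}{5}=252$ equals none of $288,216,648$, the degrees are all distinct, while $216<252$ (and $60<70$ at $g=5$) shows that the strict inequality genuinely fails for $g\in\{5,6\}$. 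All the remaining steps — the reduction to $t=\lfloor g/2\rfloor$, the bound $P_\delta(t)\ge g$, and the two exceptional genera — are routine computations.
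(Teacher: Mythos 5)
Your proof is correct, and while the overall strategy coincides with the paper's (reduce to the smallest admissible degree, compare a factorial expression with $\binom{2g-2}{g-1}$, and enumerate $g=5,6$ by hand), your execution differs in two places and is in both cases more complete. First, you justify the reduction across all $\delta$ by the uniform bound $P_\delta(t)\ge g$ (which checks out: it is an identity for $\delta=(2)$ and follows from the single summand $\binom{g-t+d-1}{d-1}\ge\binom{g-t+2}{2}\ge\bigl(\tfrac g2+2\bigr)\bigl(\tfrac g2+1\bigr)/2\ge g$ otherwise), whereas the paper simply asserts that the minimum over $\delta$ and $t$ is attained at $\delta=(2)$, $t=\lfloor g/2\rfloor$; note that with this bound in hand you do not even need Proposition \ref{Prop: Variation deg G Ard}, since $t!(g-t)!$ is trivially minimised at $t=\lfloor g/2\rfloor$. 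Second, for the inequality $\lfloor g/2\rfloor!\,\lceil g/2\rceil!\,g>\binom{2g-2}{g-1}$ the paper uses Stirling, which only works for $g\ge 22$, and delegates the fifteen genera $7\le g\le 21$ to an unrecorded hand check; your monotone-ratio induction (the quotients $\frac{(m+1)(2m+1)}{2(4m-1)}$ and $\frac{(m+1)^2}{4m+1}$ are correct, and $(2m-3)(m-1)\ge0$, $m(m-2)\ge0$ for $m\ge2$ together with $r(7)=12/11$ close the argument) covers all $g\ge7$ uniformly and is the cleaner proof. Finally, at $g=6$ you correctly include the admissible locus $\mathcal{A}^{(3)}_{3,3}$, which the paper's case check omits; your value $648$ agrees both with the explicit formula of Theorem \ref{Thm: Gauss Degree on A^d_g_1,g_2 } and with the isolated-singularity count $6!-2\cdot36$ (and, incidentally, disagrees with the intermediate polynomial $t!(g-t)!(-t^2+gt+3-g)$ appearing in the proof of Proposition \ref{Prop: Variation deg G Ard}, which would give $216$ -- a discrepancy in that proof worth flagging, though it does not affect your argument since you never rely on that polynomial).
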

\begin{proof}
By Proposition \ref{Prop: Variation deg G Ard}, the lowest term on the left hand side of the inequality is achieved when $\delta=(2)$ and $t= \lfloor g/2 \rfloor$. Thus we have to study
\[ \deg \mathcal{G}\left(\mathcal{A}^{2}_{\lfloor g/2 \rfloor,g- \lfloor g/2 \rfloor}\right)-\deg \mathcal{G}(\mathcal{J}_g )\geq g (g/2)!^2-\binom{2g-2}{g-1}\,. \]
Using Stirlings lower bound for the factorial we have, for $g\geq 22 >8e$
\begin{align*}
    g(g/2)!^2 > (g/2)!^2 > \binom{2g-2}{g-1}\,.
\end{align*}
The remaining values can be checked by hand. We get for example for $g=7$
\[ \deg \mathcal{G}\left(\mathcal{A}^{2}_{3,4}\right)=1008>\det \mathcal{G} \left( \mathcal{J}_7 \right)=924 \,. \]
For $g=6$
\[\deg \mathcal{G}\left(\mathcal{A}^{2}_{3,3}\right)=216 \,, \quad \deg \mathcal{G}\left(\mathcal{A}^{2}_{2,4}\right)=288\,, \quad \deg \mathcal{G} \left( \mathcal{J}_6 \right)=252 \,. \]
For $g=5$

\[\deg \mathcal{G}\left(\mathcal{A}^{2}_{2,3}\right)=60 \,, \quad 
\deg \mathcal{G} \left( \mathcal{J}_5 \right)=70 \,. \]

\end{proof}

\printbibliography

@article{Nguyen2016InvaPlaneCurves,
  title = {Invariants of plane curve singularities and Pl\"{u}cker formulas in positive characteristic},
  volume = {66},
  ISSN = {1777-5310},
  url = {http://dx.doi.org/10.5802/aif.3057},
  DOI = {10.5802/aif.3057},
  number = {5},
  journal = {Annales de l’institut Fourier},
  publisher = {Cellule MathDoc/CEDRAM},
  author = {Nguyen,  Hong Duc},
  year = {2016},
  pages = {2047–2066}
}

@Inbook{MaximSchurmann2022Survey,
author="Maxim, Lauren{\c{T}}iu G.
and Sch{\"u}rmann, J{\"o}rg",
editor="Cisneros-Molina, Jos{\'e} Luis
and D{\~{u}}ng Tr{\'a}ng, L{\^e}
and Seade, Jos{\'e}",
title="Constructible Sheaf Complexes in Complex Geometry and Applications",
bookTitle="Handbook of Geometry and Topology of Singularities III",
year="2022",
publisher="Springer International Publishing",
address="Cham",
pages="679--791",
isbn="978-3-030-95760-5",
doi="10.1007/978-3-030-95760-5_10",
url="https://doi.org/10.1007/978-3-030-95760-5_10"
}

@misc{podelski2023GaussEgt,
      title={The Gauss map on bielliptic Prym varieties}, 
      author={Constantin Podelski},
      year={2023},
      eprint={2311.13521},
      archivePrefix={arXiv},
      primaryClass={math.AG}
}

@InProceedings{Donagi88SchottkyProb,
author={Donagi, Ron},
editor={Sernesi, Edoardo},
title={The schottky problem},
booktitle={Theory of Moduli},
year={1988},
publisher={Springer Berlin Heidelberg},
address={Berlin, Heidelberg},
pages={84--137},
isbn={978-3-540-45920-0}
}

@article{AuffarthCodogni2019,
title = {Theta divisors whose Gauss map has a fiber of positive dimension},
journal = {Journal of Algebra},
volume = {548},
pages = {153-161},
year = {2020},
issn = {0021-8693},
doi = {https://doi.org/10.1016/j.jalgebra.2019.11.042},
url = {https://www.sciencedirect.com/science/article/pii/S0021869319306957},
author = {Robert Auffarth and Giulio Codogni},
keywords = {Principally polarized abelian varieties, Gauss map, Schottky problem},
abstract = {We construct families of principally polarized abelian varieties whose theta divisor is irreducible and contains an abelian subvariety. These families are used to construct examples when the Gauss map of the theta divisor is only generically finite and not finite. That is, the Gauss map in these cases has at least one positive-dimensional fiber. We also obtain lower-bounds on the dimension of Andreotti-Mayer loci.}
}

@article{Welters1983,
     author = {Welters, Gerald E.},
     title = {Polarized abelian varieties and the heat equations},
     journal = {Compositio Mathematica},
     pages = {173--194},
     publisher = {Martinus Nijhoff Publishers},
     volume = {49},
     number = {2},
     year = {1983},
     zbl = {0576.14042},
     mrnumber = {704390},
     language = {en},
     url = {http://www.numdam.org/item/CM_1983__49_2_173_0/}
}

@article{Trng1988LimitesDT,
  title={Limites d’espaces tangents en géo\-mé\-trie analytique},
  author={Dũng Lê Tráng and B. Teissier},
  journal={Commentarii Mathematici Helvetici},
  year={1988},
  volume={63},
  pages={540-578}
}

@incollection{FultonKleimanMacPherson1983,
  doi = {10.1007/bfb0061643},
  url = {https://doi.org/10.1007/bfb0061643},
  year = {1983},
  publisher = {Springer Berlin Heidelberg},
  pages = {156--196},
  author = {William Fulton and Steven Kleiman and Robert MacPherson},
  title = {About the enumeration of contacts},
  booktitle = {Lecture Notes in Mathematics}
}

@article{Ciliberto1999,
  title={The Moduli Space of Abelian Varieties and the Singularities of the Theta Divisor},
  author={Ciro Ciliberto and Gerard van der Geer},
  journal={Surveys in differential geometry},
  year={1999},
  volume={7},
  pages={61-81},
  url={https://api.semanticscholar.org/CorpusID:11906131}
}

@article{KraemerCodogni,
  title={Semicontinuity of Gauss maps and the Schottky problem},
  author={Giulio Codogni and Thomas Kr{\"a}mer},
  journal={Mathematische Annalen},
  year={2020},
  volume={382},
  pages={607-630},
 % url={https://api.semanticscholar.org/CorpusID:222090914}
}

@book{Fulton1998,
  doi = {10.1007/978-1-4612-1700-8},
  url = {https://doi.org/10.1007/978-1-4612-1700-8},
  year = {1998},
  publisher = {Springer New York},
  author = {William Fulton},
  title = {Intersection Theory}
}

@book{zoladek2006monodromy,
  title={The Monodromy Group},
  author={Zoladek, H.},
  isbn={9783764375362},
  lccn={2006042650},
  series={Monografie Matematyczne},
  url={https://books.google.de/books?id=U7q2F5gkYLsC},
  year={2006},
  publisher={Birk\-h{\"a}user Basel}
}

@article{Andreotti1967,
  title={On period relations for abelian integrals on algebraic curves},
  author={Andreotti, Aldo and Mayer, Alan L},
  journal={Annali della Scuola Normale Superiore di Pisa-Classe di Scienze},
  volume={21},
  number={2},
  pages={189--238},
  year={1967}
}

@article{Verra98,
Author = {Allessandro Verra},
Title = {The degree of the Gauss map for a general Prym Theta Divisor},
Year = {2001},
volume = {10},
number = {2},
pages = {419-446},
journal = {Journal of Algebraic Geometry},
Eprint = {arXiv:math/9809067},
}

@article{Andreotti1958,
doi = {10.2307/2372835},
  url = {https://doi.org/10.2307/2372835},
  year = {1958},
  month = oct,
  publisher = {{JSTOR}},
  volume = {80},
  number = {4},
  pages = {801},
  author = {Aldo Andreotti},
  title = {On a Theorem of Torelli},
  journal = {American Journal of Mathematics}
}

@article{Gru17,
  doi = {10.2140/ant.2017.11.983},
  url = {https://doi.org/10.2140/ant.2017.11.983},
  year = {2017},
  month = {6},
  publisher = {Mathematical Sciences Publishers},
  volume = {11},
  number = {4},
  pages = {983--1001},
  author = {Giulio Codogni and Samuel Grushevsky and Edoardo Sernesi},
  title = {The degree of the Gauss map of the theta divisor},
  journal = {Algebra {\&} Number Theory}
}

@article{Debarre1988,
  doi = {10.1215/s0012-7094-88-05711-0},
  url = {https://doi.org/10.1215/s0012-7094-88-05711-0},
  year = {1988},
  month = {8},
  publisher = {Duke University Press},
  volume = {57},
  number = {1},
  pages = {221--273},
  author = {Olivier Debarre},
  title = {Sur les varietes abeliennes dont le diviseur theta est
		 singulier en codimension {3}},
  journal = {Duke Mathematical Journal}
}

@book{Birkenhake2004,
  title={Complex Abelian Varieties},
  author={Birkenhake, C. and Lange, H.},
  isbn={9783540204886},
  lccn={2004045272},
  series={Grundlehren der mathematischen Wissenschaften},
  url={https://books.google.de/books?id=MOW2gEP7HIkC},
  year={2004},
  publisher={Springer Berlin Heidelberg}
}

@book{arbarello,
  title={Geometry of Algebraic Curves I},
  author={Arbarello, E. and Cornalba, M. and Griffiths, P. and Harris, J.D.},
  number={Bd. 1},
  isbn={9781475753233},
  series={Grundlehren der mathematischen Wissenschaften},
  url={https://books.google.de/books?id=LanxBwAAQBAJ},
  year={1985},
  publisher={Springer New York}
}

\end{document}